\newtheorem{theorem}{Theorem}[section]
\newtheorem{lemma}[theorem]{Lemma}
\newtheorem{corollary}[theorem]{Corollary}
\newtheorem{conjecture}[theorem]{Conjecture}
\theoremstyle{definition}
\newtheorem{definition}[theorem]{Definition}
\newtheorem{proposition}[theorem]{Proposition}
\newtheorem{remark}[theorem]{Remark}
\renewcommand{\P}{{\mb P}}
\def\Q{\mathbb Q}
\def\wdw{\wedge\dots\wedge}
\def\L{\Lambda}
\def\oL{\ol L}
\def\t{\times}
\def\k{K}
\def\cl{\colon}
\def\cl{\colon}
\def\emb{\hookrightarrow}
\def\A{{\mb A}}
\def\ol{\overline}
\def\F{\mathcal F}
\def\mc{\mathcal}
\def\wt{\widetilde}
\def\ph{\varphi}
\def\mb{\mathbb}
\def\wt{\widetilde}
\def\bs{\backslash}
\def\A{\mb A}
\def\Z{\mb Z}
\def\Q{\mb Q}
\def\Set{\textbf{Set}}
\def\F{\mathbf{Fields}}
\newcommand{\res}[2]{\left.{#1}\right|_{#2}}
\newcommand{\SRL}{\mathrm{RecMaps}}
\newcommand{\Hom}{\mathrm{Hom}}
\def\B{{\mc B}}
\DeclareMathOperator{\ts}{\partial}
\DeclareMathOperator{\ord}{ord}
\DeclareMathOperator{\mult}{mult}
\DeclareMathOperator{\val}{val}
\DeclareMathOperator{\im}{Im}
\DeclareMathOperator*{\colim}{colim}
\DeclareMathOperator{\spec}{Spec}
\DeclareMathOperator{\dval}{DiscVal}
\DeclareMathOperator{\Div}{Div}
\def\N{\mc N}
\def\ard{\dashrightarrow}
\begin{document}
\title[Higher Chow groups and not necessarily admissible cycles]{Higher Chow groups and not necessarily admissible cycles}
\author{Vasily Bolbachan}
\email{vbolbachan@gmail.com}
\address{Skolkovo Institute of Science and Technology, Nobelya str., 3, Moscow, Russia, 121205; Faculty of Mathematics, National Research University Higher School of Ecnomics, Russian Federation, Usacheva str., 6, Moscow, 119048; HSE-Skoltech International Laboratory of Representation
Theory and Mathematical Physics, Usacheva str., 6, Moscow, 119048}


\begin{abstract}
    We construct some analog of cubical Bloch's higher Chow groups. Instead of considering cycles in $X\t\A^n$ we consider varieties $Y$ over $X$ together with a distinguished element in the $n$-th exterior power of the multiplicative group of the field of fraction on $Y$. This definition allows us to make sense of a cycle in $X\t\A^n$ intersecting faces improperly as an element in this complex. 

    We prove that this complex is well-defined and study its basic properties: flat pullback, the localization sequence etc.  As an application we prove that the cohomology of this complex in degrees $m-1, m$ and weight $m$ isomorphic to the cohomology of polylogarithmic complex.
\end{abstract}

\thanks{This paper is an output of a research project implemented as part of the Basic Research Program at the National Research University Higher School of Economics (HSE University). This paper was also supported in part by the contest "Young Russian Mathematics"}
\maketitle

\section{Introduction}
\label{sec:introduction}
Everywhere we work over $\Q$. So any abelian group is supposed to be tensored by $\Q$. Fix some field $\k$ of characteristic zero. All varieties are defined over $\k$.

Let $X$ be an algebraic scheme over $\k$. The definition of Bloch's higher Chow group of $X$ can be found in \cite{bloch1986algebraic}. We use cubical version \cite{levine1994bloch}. This group is defined as cohomology of certain complex $z^r(X, 2r-*)$. The group $z^r(X, n)$ is a quotient of the group of codimension $r$ algebraic cycles on $X\t (\P^1\bs\{1\})^n\cong X\t \A^n$ intersecting all faces properly by the subgroup of degenerate cycles. The differential is given by the signed sum of intersections with codimension one faces.

Consider the simplest case when $X$ is a point. In this case the data of an irreducible cycle on $(\P^1\bs\{1\})^n$ is the same as the data of an algebraic variety $Z$ together with $n$ rational functions $\xi_1,\dots, \xi_n$ on $Z$ satisfying certain conditions. The functions $\xi_i$ are just restrictions of coordinate functions to $Z$.

The main idea of this paper is to replace the data $(Z, \xi_1, \dots, \xi_n)$ by the data $(Z, w)$, where $w$ is an arbitrary element in $\L^n(\k(Z)^\t)$. Here $\k(Z)^\t$ is the multiplicative group of the field of rational functions on $Z$ considered as an abelian group.  The element $(Z, \xi_1,\dots, \xi_n)$ corresponds to $(Z, \xi_1\wdw \xi_n)$. The differential is given by so-called tame symbol map defined by A. Goncharov \cite{goncharov1995geometry}.

In particular, any irreducible cycle $Z\subset (\P^1\bs\{1\})^n$ not lying in the union of faces gives an element in this complex. This cycle may intersect faces of codimension two and higher improperly. This is what makes our complex different from classical higher Chow group.

Let us make this idea precise. We recall that \emph{an alteration} is a proper surjective morphism which is generically finite.

    \begin{definition} 
    Let $X$ be an algebraic scheme over $\k$ and $m, j$ be integers. Set $p=\dim X+m-j$ and $n=2m-j$. Denote by $\wt\L(X, m)_j$  a vector space freely generated by the isomorphism classes of the triples $(Y, a, f)$, where: $Y$ is a variety over $\k$ of dimension $p$, $f\colon Y\to X$ is a proper morphism and $a\in\Lambda^n(\k(Y)^\t)$. Denote by $[Y,a,f]\in\wt\L(X,m)_j$ the corresponding element.
Denote by $\L(X, m)_j$ the quotient of $\wt\L(X, m)_j$ by the following relations:
$$[\wt Y,\ph^*(a),f\circ \ph]=(\deg\ph)[Y,a,f].$$
$$[Y,a+b,f]=[Y, a, f] + [Y, b, f].$$
$$[Y,\lambda\cdot a,f]=\lambda[Y, a, f].$$

In this formula $\ph\colon \wt Y\to Y$ is any alteration and $\lambda\in \mathbb Q$. The map $\ph^*$ is defined by the formula
$\ph^*(\alpha_1\wdw \alpha_n)=\ph^*(\alpha_1)\wdw \ph^*(\alpha_n)$. We remark that we write group law in the group $\Lambda^n\k(Y)^\t$ additively. So in the case $n=1$ addition in this group corresponds to \emph{multiplication} of functions.

\end{definition}

Below we will give a precise definition of the differential in this complex and will show that in this way we get a well-defined complex. The fact that $d^2=0$ will follow from some version of Parshin reciprocity law \cite{parshin1975class,bolbachan_2023_chow}.

\subsection{The statement of the main results}
We will prove the following theorem:

\begin{theorem}
    The complex $\Lambda(X,m)$ is well-defined. It satisfies the following properties:
    \begin{enumerate}
        \item (Functoriality) The complex $\Lambda(\cdot, m)$ is covariantly functorial for proper maps and contravariantly functorial for flat maps.

        \item (Localization) Let $Z$ be a closed  subset of $X$ of codimension $k$. Denote by $i$ the embedding $Z\emb X$ and by $j$ the complementary open embedding $U\emb X$. We have the following exact sequence of complexes:
        $$0\to \L(Z, m-k)[-2k]\xrightarrow{i_*}\L(X,m)\xrightarrow{j^*}\L(U,m)\to 0.$$
        \item Denote by $z^m(X, 2m-*)$ the complex calculating cubical Bloch's higher Chow group in weight $m$. There is the canonical morphism of complexes $$z^m(X, 2m-*)\to \Lambda(X, m)_*.$$
    \end{enumerate}
\end{theorem}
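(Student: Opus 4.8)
The morphism is the one indicated in the introduction: an irreducible admissible cycle is sent to its closure, decorated with the wedge of its coordinate functions. Let $Z\subset X\t(\P^1\bs\{1\})^n$ be an irreducible codimension $m$ cycle meeting every face properly, let $\ol Z\subset X\t(\P^1)^n$ be its closure, and let $\pi\colon\ol Z\to X$ be the projection; since $(\P^1)^n$ is proper, $\pi$ is proper, and $\dim\ol Z=\dim Z=p$. Writing $t_1,\dots,t_n$ for the coordinates and $\xi_i=\res{t_i}{\ol Z}\in\k(\ol Z)^\t$, I set $Z\mapsto[\ol Z,\xi_1\wdw\xi_n,\pi]$ and extend $\Q$-linearly. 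The first thing to check is independence of the chosen model: any two proper models of $\k(Z)$ over $X$ are dominated by a common alteration, and since $\xi_1\wdw\xi_n$ depends only on $\k(Z)^\t$, the relation $[\wt Y,\ph^*a,f\circ\ph]=(\deg\ph)[Y,a,f]$ shows the resulting class in $\Lambda(X,m)_j$ is independent of the model. In particular I may replace $\ol Z$ by its normalization, or by any convenient alteration, without changing the image.

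The core of the statement is that this assignment intertwines the cubical differential $d=\sum_i(-1)^i(\partial_i^\infty-\partial_i^0)$ with the tame symbol differential of $\Lambda(X,m)$, which sends $[\ol Z,\xi_1\wdw\xi_n,\pi]$ to $\sum_D[\ol D,\partial_D(\xi_1\wdw\xi_n),\res{\pi}{\ol D}]$, the sum running over prime divisors $D$ of $\ol Z$. Goncharov's tame symbol $\partial_D$ is nonzero only when some $\xi_i$ has nonzero order along $D$, i.e. only when $D$ is a component of $\{t_i=0\}$ or $\{t_i=\infty\}$ for some $i$. I would split these divisors according to whether they meet the open stratum $X\t(\P^1\bs\{1\})^n$ or lie entirely in the removed boundary $\bigcup_l\{t_l=1\}$, and treat the resulting cases:
\begin{enumerate}
\item If $D$ meets the open stratum, admissibility forces $D$ to be a component of a single face, all other $\xi_j$ restricting to honest units along $D$; then $\partial_D$ equals, up to the sign $(-1)^i$ and the multiplicity $\ord_D(\xi_i)$, the symbol $\ol{\xi_1}\wdw\widehat{\xi_i}\wdw\ol{\xi_n}$ attached by the map to the higher Chow face $\partial_i^0Z$ or $\partial_i^\infty Z$. (If instead a divisor meeting the open stratum were a component of two faces, $Z$ would meet a codimension $2$ face improperly, contradicting admissibility.)
\item If $D\subset\{t_l=1\}$ for some $l$, then $\res{t_l}{D}\equiv1$, so one entry of the residual symbol is the unit $1$, which is the zero element of the additively written group $\Lambda^{n-1}(\k(D)^\t)$; hence $D$ contributes $0$.
\end{enumerate}
Case (2) is precisely the mechanism that discards the faces we have removed, while admissibility enters decisively in case (1). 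After matching the signs of Goncharov's symbol against the cubical face operators and checking that $\ord_D(\xi_i)$ reproduces the intersection multiplicities occurring in $\partial_i^\bullet Z$, one obtains the desired identity of differentials.

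I expect the principal obstacle to be the bookkeeping in case (1) on a closure $\ol Z$ that is singular or along which the faces are non-reduced: there the components of $\{t_i=0\}$ need not be the closures of the components of $\partial_i^0Z$, nor need $\partial_D$ take the naive one-term form. The clean remedy is to pass, via an alteration allowed by the first paragraph, to a model on which the total preimages of the faces form a reduced divisor with the $\xi_j$ in good position, so that $\ord_D(\xi_i)$, the residue fields $\k(D)$, and the induced coordinate functions can be read off and identified with the data of the cubical faces. Finally, having produced a chain map on the complex of admissible cycles, I would pass to the normalized subcomplex of alternating cycles, which computes $z^m(X,2m-*)$; this yields the asserted morphism of complexes, and by construction it is canonical.
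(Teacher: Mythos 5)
Your proposal addresses only the third item of the theorem. The statement also asserts that the complex $\L(X,m)$ is well-defined (i.e.\ that the differential is independent of the chosen alteration and that $d^2=0$, which in the paper occupies all of Section \ref{sec:Lambda_is_well_defined} and rests on Theorem \ref{th:differential_strictly_regular_elements} and the Parshin-type reciprocity of Theorem \ref{th:Parshin_reciprocity_law}), as well as proper/flat functoriality and the localization sequence of Theorem \ref{th:localization}. None of these appear in your write-up, so as a proof of the stated theorem it is incomplete from the outset.

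Concerning item (3) itself, your construction and case division agree with the paper's Theorem \ref{th:W(m)_is_morphism}: divisors meeting the open stratum reproduce the cubical faces, and divisors inside $\bigcup_l\{t_l=1\}$ die because a unit $1$ appears in the residual symbol. But there is a gap in how you compute the differential. You write $d[\ol Z,\xi_1\wdw\xi_n,\pi]$ as a sum over the prime divisors of $\ol Z$; that is not the definition. The differential is defined by choosing an alteration $\ph\colon T\to\ol Z$ with $T$ smooth and $\ph^*(\xi_1\wdw\xi_n)$ strictly regular, and summing tame symbols over \emph{all} prime divisors of $T$, divided by $\deg\ph$. The divisors of $T$ contracted by $\ph$ then contribute a priori nonzero terms, and the essential content of the proof is that these terms vanish in $\L(X,m)$. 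This is the paper's Corollary \ref{cor:differential_exceptional_divisor_zero_Bloch}: it uses the admissibility of $Z$ (proper intersection with all faces, not just the codimension-one ones) to bound the dimension of the image of a contracted divisor, and then invokes the degenerate-cycle vanishing of Lemma \ref{lemma:about_degenerate_cycles}. Your closing paragraph treats the passage to a good model as bookkeeping that lets one ``read off'' the orders and residue fields, but it never identifies, let alone proves, this vanishing statement; without it the identity of differentials does not follow. Once that lemma is in place, the remaining step of pushing the non-contracted divisors of $T$ down to divisors of $\ol Z$ requires the multiplicity identity $\sum_{D'\in\Div(\ph,D)}e(D',D)f(D',D)=\deg\ph$ together with the functoriality of the tame symbol, which your sketch also leaves implicit.
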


\begin{conjecture}
    The map $z^m(X,2r-*)\to \L(X,m)_*$ is a quasi-isomorphism. 
\end{conjecture}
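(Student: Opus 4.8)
The plan is to leverage the two structural properties established in the previous theorem — contravariant flat functoriality and the localization sequence — to reduce the statement to the case of a field, and then to analyse the canonical morphism over a field by comparing generators. Throughout I use that the map of part (3) is a genuine map of complexes, and that its construction from restriction of coordinate functions makes it compatible with both $i_*$ (which on the $\Lambda$-side is simply $(Y,a,f)\mapsto(Y,a,i\circ f)$) and $j^*$ (restriction of the cycle to the open part).

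First I would reduce to fields. Both $z^m(-,2m-*)$ and $\Lambda(-,m)$ carry a filtration by dimension of support whose successive quotients are governed by localization — Bloch's localization theorem on the one side, part (2) on the other — and the canonical morphism respects this filtration. Hence it induces a map of the associated niveau spectral sequences, whose $E_1$-pages are indexed by the points $x\in X$ with entries the cohomology of the field complexes $z^m(\spec k(x),2m-*)$ and $\Lambda(\spec k(x),m)$. Provided the two spectral sequences converge (routine for finite-dimensional $X$), it suffices to prove the map is a quasi-isomorphism when $X=\spec F$ is the spectrum of a field, for every $F$ and every weight $m$.

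Over a field the two complexes have generators of the same dimension: in degree $j$ an admissible codimension-$m$ cycle $Z\subset(\P\bs\{1\})^{2m-j}_F$ maps to $(\ol Z,\xi_1\wdw\xi_{2m-j},\pi)$, while $\Lambda(\spec F,m)_j$ allows an \emph{arbitrary} $a\in\Lambda^{2m-j}(F(Y)^\t)$ on an arbitrary $Y$ of dimension $m-j$. In the two top degrees $j=m-1,m$ I would invoke the identification, asserted in the abstract, of $H^{m-1}$ and $H^m$ of $\Lambda(\spec F,m)$ with the cohomology of Goncharov's weight-$m$ polylogarithmic complex, together with the known comparison of that complex with motivic cohomology $H^*_{\mc M}(F,\Q(m))$ in the same degrees; since Bloch's higher Chow groups compute the same motivic cohomology, this yields the quasi-isomorphism in degrees $m-1,m$.

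The main obstacle is the passage to all degrees over a field, and it is precisely why the full statement remains conjectural. One must show that every generator $(Y,a)$ — with $a$ a completely general element of $\Lambda^{2m-j}(F(Y)^\t)$, in particular one whose tame symbol sees improper behaviour at the higher-codimension faces — is cohomologous, modulo the tame-symbol differential and the defining relations, to a combination of admissible cycles (with the dual injectivity statement handled analogously). By multilinearity one may reduce to decomposable $a=\xi_1\wdw\xi_n$, so this becomes a moving lemma for the new complex: using alterations to place $Y$ and the supports of the tame symbols of the $\xi_i$ in good position relative to the faces, one wants to replace a non-admissible cycle by a cohomologous admissible one. I expect controlling these higher-codimension improper intersections uniformly in the degree — exactly the contributions the tame-symbol differential is designed to absorb — to be the genuine difficulty, and I would expect it to be entangled with the obstructions that keep the general comparison of the polylogarithmic complex with motivic cohomology open beyond the top two degrees. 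For this reason I would claim only degrees $m-1,m$ unconditionally and leave the general-degree field case as the hard conjectural core.
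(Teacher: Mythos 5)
This statement is stated in the paper as a \emph{conjecture}: the paper offers no proof of it, so there is nothing to compare your argument against, and your own text concedes that the general case remains open. What must be assessed is the part you claim unconditionally, namely that $\mc W(m)$ is a quasi-isomorphism in degrees $m-1,m$ over a field, and here there is a genuine gap. Your argument is: $H^j(\Lambda(\spec F,m))\cong H^j(\Gamma(F,m))$ by the paper's Theorem \ref{th:Tatoro_is_isomorphism_intro}, and $H^j(\Gamma(F,m))\cong H^j_{\mc M}(F,\Q(m))\cong H^j(z^m(\spec F,2m-*))$ by ``the known comparison.'' But for $j=m-1$ that comparison is precisely Goncharov's conjecture in degree $m-1$, which is known only in low weight (Suslin's theorem on $K_3^{\mathrm{ind}}$ for $m=2$; the Nesterenko--Suslin--Totaro theorem covers only $j=m$). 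For general $m$ it is open, which is one reason the author leaves the statement as a conjecture. Moreover, even where both abstract isomorphisms are available, they do not show that the \emph{specific} map $\mc W(m)$ induces them: you would additionally have to identify the composite $\mc{CD}_{\ge m-1}(m)\circ \mc W(m)$ (or $\mc T_{\ge m-1}(m)^{-1}\circ \mc W(m)$) with a comparison map already known to be a quasi-isomorphism $z^m(\spec F,2m-*)\to\Gamma(F,m)$. Two $\Q$-vector spaces being isomorphic does not make a given map between them an isomorphism; this identification is nowhere carried out and is not obviously available.

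The reduction to fields also needs more care than ``routine.'' On the $\Lambda$-side localization is exact on the level of complexes (Theorem \ref{th:localization}), but on the higher Chow side the sequence is exact only up to quasi-isomorphism via Bloch's moving lemma, so the comparison of niveau spectral sequences requires checking that $\mc W(m)$ is compatible with both localization triangles in the derived category, and the convergence/colimit issues for $\Lambda$ over shrinking opens must be addressed. Finally, note that the $E_1$-comparison would require the field-level statement in \emph{all} cohomological degrees to conclude anything global, so the spectral-sequence reduction does not combine with a field-level result restricted to degrees $m-1,m$. In short: the strategy is a sensible one, and your identification of the moving-lemma-type difficulty as the core obstruction is apt, but the portion you present as unconditional is not established.
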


It is interesting that in our case, the localization sequence holds on the level of complexes. Compare our proof with nontrivial arguments from \cite{bloch_1994_moving_lemmas}.

We recall that $\k$ is the base field. 
The following statement is of independent interest.
\begin{theorem}
\label{th:Lambda_generators_intro}
   The group $\Lambda(\k, m)/\im(d)$ is generated by the elements of the form $[Y'\t \P^1, a, f].$
\end{theorem}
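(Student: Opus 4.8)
The plan is to prove the statement by induction on $p=\dim Y$, after two preliminary reductions coming directly from the defining relations of $\L(\k,m)$. First I would use the additivity relations $[Y,a+b,f]=[Y,a,f]+[Y,b,f]$ and $[Y,\lambda a,f]=\lambda[Y,a,f]$ to reduce to generators $[Y,a,f]$ with $a=\alpha_1\wdw\alpha_n$ decomposable, $\alpha_i\in\k(Y)^\times$. Next I would use the alteration relation $[\wt Y,\ph^*(a),f\circ\ph]=(\deg\ph)[Y,a,f]$ to replace $Y$ by any convenient birational model. Since we work in characteristic zero, resolution of singularities lets me assume $Y$ is smooth and projective; and choosing a separating transcendence basis $t_1,\dots,t_p$ of $\k(Y)/\k$ (with $t_p$ taken among the generators $\alpha_i$) and passing to the closure of the graph, I obtain a dominant, generically finite morphism $\psi\colon Y\to \P^{p-1}\times\P^1$ of some degree $d$, where the last factor is the coordinate $t_p$ and $\P^{p-1}$ carries coordinates $t_1,\dots,t_{p-1}$.

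The key point is that the target $\P^{p-1}\times\P^1$ already has the required product form. For any $w\in\L^n(\k(\P^{p-1}\times\P^1)^\times)$ the alteration relation gives $[Y,\psi^*(w),f]=d\,[\,\P^{p-1}\times\P^1,w,g\,]$, where $g$ is the induced proper map, so every generator whose class $a$ is pulled back from the base is, up to the factor $d$, of the desired form. It therefore remains to show that \emph{modulo} $\im(d)$ an arbitrary decomposable $a\in\L^n(\k(Y)^\times)$ can be rewritten in terms of elements pulled back from $\k(\P^{p-1}\times\P^1)^\times$, together with generators supported on varieties of dimension $<p$.

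For this I would analyse the finite extension $\k(Y)/E$, where $E:=\k(\P^{p-1}\times\P^1)$, through the tame symbol. The differential of $\L(\k,m)$ is the signed sum of Goncharov tame symbols along the prime divisors of a model, and its compatibility with norms/transfers is exactly the Parshin-type reciprocity already invoked in the paper to prove $d^2=0$. Using this compatibility, for each $\alpha\in\k(Y)^\times$ one compares $\alpha$ with its norm $N_{\k(Y)/E}(\alpha)\in E^\times$: the difference is, modulo the relations and $\im(d)$, a sum of contributions concentrated on the divisorial locus where $\psi$ ramifies or where the $\alpha_i$ acquire zeros and poles, and each such contribution is a generator living on a subvariety of dimension $<p$. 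Processing the decomposable $a$ factor by factor through this comparison reduces $[Y,a,f]$, modulo $\im(d)$, to a combination of pulled-back (hence product) generators and lower-dimensional generators, to which the induction hypothesis applies.

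The main obstacle is precisely this last reduction: the pullback $\psi^*\colon\L^n(E^\times)\to\L^n(\k(Y)^\times)$ is far from surjective, so one genuinely has to use the differential to absorb the ``vertical'' part of $a$ that does not descend to the base. Controlling these correction terms — guaranteeing that, after choosing the model and the transcendence basis in sufficiently general position, the auxiliary residue divisors are either ruled or of strictly smaller dimension, so that reciprocity closes the induction — is a Gersten/Bass–Tate-style argument, and this is where the real work lies. The base case $p=1$, in which $Y$ is a smooth projective curve mapping to $\P^1$, serves as the model computation: there the comparison with the norm to $\k(\P^1)=\k(t)$ together with Weil reciprocity expresses $[Y,a,f]$ through $[\,\P^1,\cdot\,]$-type generators modulo tame symbols coming from surfaces.
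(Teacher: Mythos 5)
There is a genuine gap here, and in fact the inductive scheme you propose is ill-posed. In the graded piece $\L(\k,m)_j$ every generator $[Y,a,f]$ has the \emph{same} dimension $\dim Y=p=m-j$: the dimension is fixed by the grading, so there are no ``generators supported on varieties of dimension $<p$'' inside $\L(\k,m)_j/\im(d)$ to which an induction hypothesis on $p$ could apply. Any correction term you produce must again be an element of $\L(\k,m)_j$, i.e.\ supported on a $p$-dimensional variety, and must be shown to be either of product form or in $\im(d)$ (hence a residue of an element supported on a $(p+1)$-dimensional variety). The residue/norm contributions you describe as ``concentrated on the ramification locus'' live on divisors of $Y$ and therefore land in $\L(\k,m)_{j+1}$, not in $\L(\k,m)_j$; they cannot be fed back into the statement being proved. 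This is not a presentational issue: without a valid decreasing parameter the whole reduction collapses, and the step you yourself flag as ``where the real work lies'' is exactly the content of the theorem.

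The paper resolves this by inducting not on $\dim Y$ but on the \emph{degree} $s$ of a rational map $Y\to(\P^1)^p$, via the filtration $\mc G_s$. Concretely, $\k(Y)$ is generated over $\k(x_2,\dots,x_{p+1})$ by an element $x_1$ whose minimal polynomial $P$ has degree $s$; the functions $\alpha_j$ are written as restrictions of polynomials $Q_j$ of degree $<s$ in $x_1$; and one computes $d$ of $[(\P^1)^{p+1},\,P\wedge Q_1\wdw Q_n]$ (on a strictly regular model). The residue along $\{P=0\}$ recovers $[Y',\alpha_1\wdw\alpha_n]$; all other residues on $(\P^1)^{p+1}$ are supported on divisors admitting a map of degree $<s$ to $(\P^1)^p$, so the inductive hypothesis applies; and the residues along exceptional divisors of the resolution are supported on varieties birational to $S'\t\P^1$, hence already of the required product form. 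This is precisely the geometric incarnation of the Bass--Tate norm argument you are gesturing at, but organised by the correct decreasing invariant. Your two preliminary reductions (decomposability of $a$, passage to a smooth projective model by alterations) are fine and agree with the paper; everything after them needs to be replaced by the degree induction.
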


Even in the case $j=m-1$, the corresponding statement for cubical Bloch's higher Chow group is not known (see \cite{gerdes1991linearization} for the simplicial case). 

The second part of this paper is devoted to the connection of the complex $\L(\k, m)$ with polylogarithmic complex defined by A. Goncharov in \cite{goncharov1994polylogarithms, goncharov1995geometry}. Denote weight $m$ polylogarithmic complex of the field $\k$ by $\Gamma(\k,m)$. 

Let $C$ be a complex. Denote by $\tau_{\geq k}C$ its canonical truncation. Define a morphism of complexes $$\mc T_{\ge m-1}(m)\colon \tau_{\ge m-1}\Gamma(\k,m)\to \tau_{\ge m-1}\Lambda(\k, m)$$ as follows. The element $\{a\}_2\wedge c_3\wdw c_m$ goes to
$$[\P^1, t\wedge (1-t)\wedge (1-a/t)\wedge c_3\wdw c_m].$$
The element $c_1\wdw c_m$ goes to
$[\spec \k, c_1\wdw c_m]$.

The construction of this map goes back to Totaro \cite{totaro_1992_milnork}. See also \cite{bloch_rriz_1994_mixed,goncharov_levin_gangl_2009_muly_polyl_alg_cycles,gangl_herbert_poly_identities_chow_group,zhao2007goncharov}.
Here is our main result:

\begin{theorem}    \label{th:Tatoro_is_isomorphism_intro}
    Let $\k$ be a field of characteristic zero. The map $\mc T_{\ge m-1}(m)$  is a quasi-isomorphism. In particular for $j=m-1, m$ we have $$H^j(\Lambda(\spec \k, m))\cong H^j(\Gamma(\k, m)).$$
    Moreover, when $\k$ is algebraically closed the map $\mc T_{\ge m-1}(m)$ is an isomorphism of complexes.
\end{theorem}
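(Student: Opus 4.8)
The plan is to exploit that both $\tau_{\ge m-1}\Gamma(\k,m)$ and $\tau_{\ge m-1}\L(\k,m)$ are concentrated in cohomological degrees $m-1$ and $m$, so that $\mc T_{\ge m-1}(m)$ is a quasi-isomorphism precisely when it induces isomorphisms on $H^{m-1}$ and $H^m$; the stronger assertion for $\k$ algebraically closed is that the two components $\ph^{m-1},\ph^m$ are already isomorphisms of the groups in each degree. Writing $\delta$ for the differential of $\Gamma(\k,m)$ and $d$ for that of $\L(\k,m)$, I would first check that $\mc T_{\ge m-1}(m)$ is a morphism of complexes, i.e. $d\circ\ph^{m-1}=\ph^m\circ\delta$. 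This is the residue computation for the dilogarithm cycle: writing $1-t=-(t-1)$ and $1-a/t=(t-a)/t$ and discarding $2$-torsion, one has $[\P^1,t\wedge(1-t)\wedge(1-a/t)\wedge c_3\wdw c_m]=[\P^1,t\wedge(t-1)\wedge(t-a)\wedge c_3\wdw c_m]$, and the tame symbols at $t=0,1,\infty$ vanish (each residue contains a factor in $\{\pm1\}$, hence zero after tensoring with $\Q$), leaving only the residue at $t=a$, which equals $[\spec\k,(1-a)\wedge a\wedge c_3\wdw c_m]$ up to sign. This is exactly $\ph^m(\delta(\{a\}_2\wedge c_3\wdw c_m))$.

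Degree $m$. Here $\dim Y=0$, so a generator is $[\spec L,b]$ with $L/\k$ finite and $b\in\L^m(L^\t)$, taken modulo the transfer relation. I would identify $H^m(\L(\k,m))=\mathrm{coker}(d\colon\L(\k,m)_{m-1}\to\L(\k,m)_m)$ with Milnor $K$-theory $K^M_m(\k)_\Q$: the differential out of the curves imposes exactly the Gersten/tame-symbol relations, and together with the transfers (via Weil–Suslin reciprocity over $\P^1$) this is the Nesterenko–Suslin–Totaro presentation of $K^M_m$. Since $H^m(\Gamma(\k,m))=\L^m\k^\t/\im(\delta)=K^M_m(\k)_\Q$ as well, and $\ph^m$ is the natural map $c_1\wdw c_m\mapsto[\spec\k,c_1\wdw c_m]$, the two agree. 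When $\k$ is algebraically closed the only $L$ is $\k$, the transfer relations are vacuous, $\L(\k,m)_m=\L^m\k^\t$, and $\ph^m$ is literally the identity.

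Degree $m-1$ is the heart. By Theorem \ref{th:Lambda_generators_intro} every class in $\L(\k,m)_{m-1}/\im(d)$ is represented by $[\P^1,a]$ with $a\in\L^{m+1}(\k(t)^\t)$. Decomposing $\k(t)^\t$ into constants and linear factors, I would write $a$ as a sum of terms $c\wedge(t-\alpha_1)\wdw(t-\alpha_r)$ with $c\in\L^{m+1-r}(\k^\t)$ and analyse them by the number $r$ of moving factors. The $r=0$ (constant) terms vanish, since pulling back along a self-cover $t\mapsto t^N$ fixes $a$, whence the transfer relation forces $[\P^1,a]=N[\P^1,a]$. A direct residue computation shows the $r\ge 4$ terms are not cocycles, so they are irrelevant for $H^{m-1}$; for the sharper levelwise statement they must be reduced modulo $\im(d)$ to terms with $r\le 3$. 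The $r=3$ terms are precisely the image of $\ph^{m-1}$: by a projective change of coordinate, $c\wedge(t-\alpha_1)\wedge(t-\alpha_2)\wedge(t-\alpha_3)$ corresponds to $\ph^{m-1}(\{x\}_2\wedge c)$ with $x$ the cross-ratio of $\alpha_1,\alpha_2,\alpha_3,\infty$. This suggests the inverse $\psi$: send $[\P^1,a]$ to the class in $B_2(\k)\otimes\L^{m-2}\k^\t$ built from the cross-ratios of the moving factors of $a$.

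The main obstacle is to prove that $\psi$ is well defined and inverse to $\ph^{m-1}$, i.e. that $\im(d\colon\L(\k,m)_{m-2}\to\L(\k,m)_{m-1})$ cuts the group of $[\P^1,a]$ down to exactly $B_2(\k)\otimes\L^{m-2}\k^\t$ modulo the image of $B_3(\k)\otimes\L^{m-3}\k^\t$. This has two parts. First, I must show that the relations coming from surfaces, computed via the Parshin reciprocity law already used for $d^2=0$, produce exactly the five-term relation on cross-ratios (so that $\psi$ lands in $B_2$ and no further relations appear) and account for the $B_3$-image; constructing the explicit surface cycles whose residues realise the five-term and trilogarithm functional equations is the delicate step, and it is also what reduces the $r\ge 4$ configurations to lower $r$. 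Second, for non-closed $\k$ I would reduce $[\P^1_L,a]$ over an extension $L$ to $B_2(\k)$ by the same transfer and reciprocity arguments as in degree $m$, which is what degrades the levelwise isomorphism to a quasi-isomorphism. Combining the three degrees, $\mc T_{\ge m-1}(m)$ is a quasi-isomorphism in general and an isomorphism of complexes when $\k$ is algebraically closed.
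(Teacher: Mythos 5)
Your skeleton matches the paper's: check that $\mc T_{\ge m-1}(m)$ is a morphism of complexes by the residue computation at $t=0,1,a,\infty$; get surjectivity in degree $m-1$ from Theorem \ref{th:Lambda_generators_intro} followed by an induction on the number of moving linear factors (the paper's Proposition \ref{prop:Lambda_generated_by_Totaro_cycles}, where the $r\ge 4$ terms are reduced by the differential of an explicit element on $\P^1\t\P^1$ and the $r=3$ terms are matched with cross-ratios); and prove injectivity by exhibiting a left inverse. But the step you yourself flag as ``the main obstacle'' is precisely the theorem's actual content, and your sketch of it does not work as stated. The paper does \emph{not} define the inverse by choosing a decomposition of $a\in\L^{m+1}(\k(t)^\t)$ into linear factors and taking cross-ratios; instead it constructs, for \emph{every} smooth proper curve $X$, a canonical lifted reciprocity map $\mc H_X\colon \L^{m+1}\k(X)^\t\to \Gamma(\k,m)_{m-1}/\im(\delta_m)$ (Theorem \ref{th:strong_suslin_reciprocity_law_varities}), characterized by being a homotopy trivializing the total residue map, vanishing on $f_1\wedge f_2\wedge c_3\wdw c_{m+1}$, compatible with norms, and satisfying a two-dimensional reciprocity over surfaces. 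Its existence and uniqueness occupy all of Section \ref{sec:strong_suslin_reciprocity_law} (a norm-map formalism on the functor of lifted reciprocity maps, built on the author's earlier strong Suslin reciprocity law). Your plan requires showing that \emph{every} relation among the $[\P^1,a]$ imposed by $\im(d)$ from surfaces is a consequence of the five-term relation, and that the cross-ratio recipe is independent of the chosen factorization and consistent with transfers along self-maps of $\P^1$ and with classes supported on higher-genus curves; producing ``explicit surface cycles realising the five-term and trilogarithm equations'' only gives the easy inclusion of relations, not the needed converse. That converse is essentially equivalent to the strong reciprocity law and cannot be dismissed as a delicate step.

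Two further gaps. First, the reduction to algebraically closed $\k$: the paper deduces the general case by Galois descent on both sides, and on the polylogarithmic side this is the main theorem of \cite{rudenko2021strong} --- a substantial external input that your alternative (Nesterenko--Suslin--Totaro in degree $m$, ad hoc transfers in degree $m-1$) silently replaces with an unconstructed norm map on $B_2(\k)\otimes\L^{m-2}\k^\t$ compatible with $\delta_m$. Second, a smaller point: discarding the $r\ge 4$ terms because they ``are not cocycles'' is not legitimate even for the quasi-isomorphism statement, since a cocycle class may be represented by a sum in which individual terms have $r\ge 4$; you must actually rewrite them modulo $\im(d)$, as the paper does in Lemma \ref{lemma:P1_genrated_by_Totaro}. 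The degree-$m$ discussion and the morphism-of-complexes computation are fine.
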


It follows from the proof of this theorem, that in the case $m=2$ the map $\mc T_{\geq 1}(2)$ can be lifted to a map $\mc T(2)\colon \Gamma(\k, 2)\to \L(\k, 2)$. It is surprising, because in the case of higher Chow group there is no natural map between the corresponding complexes. The reason is that Abel's five-term relation holds only modulo coboundary.

\subsection{Preliminary definitions}
\label{sub:sec:preliminary_def}
We recall that we work over $\mathbb Q$. Let $\k$ be a field. \emph{An algebraic scheme over $\k$} is a finite type scheme over $\k$. \emph{A variety} is an algebraic scheme which is integral and separated. Throughout the article we assume that the field $\k$ has characteristic zero. All schemes are assumed to be equidimensional. \emph{An alteration} $\ph\colon \wt Y\to Y$ is a proper surjective morphism between integral schemes which is generically finite. Unless otherwise stated, we always assume $\wt Y$ to be a smooth variety.

Let $(F,\nu)$ be a discrete valuation field. Denote $\mc O_\nu=\{x\in F|\nu(x)\geq 0\}, m_\nu=\{x\in F|\nu(x)>0\}$ and $\overline F_\nu =\mc O_\nu/m_\nu$. We recall that an element $a\in F^\t$ is called \emph{a uniformiser} if $\nu(a)=1$ and \emph{a unit} if $\nu(a)=0$. For $u\in \mc O_\nu$ denote by $\overline u$ its residue class in $\ol F_\nu$.

The proof of the following proposition can be found in \cite{goncharov1995geometry}:

\begin{proposition}
\label{prop:tame_symbol_classiacal}
Let $(F,\nu)$ be a discrete valuation field and $n\geq 1$. There is a unique map $\ts_\nu\colon \L^n(F^\t)\to \L^{n-1}\ol F_\nu^\t$ satisfying the following conditions:

\begin{enumerate}
    \item When $n=1$ we have $\ts_{\nu}(a)=\nu(a)$.
   \item For any units $u_2,\dots u_n\in F$ we have $\ts_\nu(x\wedge u_2\wdw u_n)=\nu(x)\wedge \ol{u_2} \dots \wedge \overline{u_n}$.

\end{enumerate}
\end{proposition}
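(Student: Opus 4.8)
The plan is to prove existence and uniqueness separately, reducing the general case to the classical Milnor $K$-theory tame symbol via a splitting of $F^\times$.

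Proof proposal.

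\textbf{Uniqueness.} The plan is to first show that conditions (1) and (2) determine $\ts_\nu$ on a generating set, and hence everywhere. Fix a uniformiser $\pi$. Every element $x\in F^\times$ can be written as $x=u\pi^{\nu(x)}$ with $u$ a unit, so in the additively written group $F^\times$ we have $x=\nu(x)\cdot[\pi]+[u]$, where $[u]$ is a unit. Expanding a wedge $x_1\wedge\dots\wedge x_n$ multilinearly in this decomposition, every term is a wedge of the form $\pi^{\epsilon}\wedge(\text{units})$ with $\epsilon\in\{0,1\}$ (any term containing two copies of $[\pi]$ vanishes). Condition (2), applied with $x=\pi$ in the first slot and with the antisymmetry of the wedge to move the $\pi$-factor to the front, together with condition (1), then forces the value of $\ts_\nu$ on each such term. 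Since these terms generate $\Lambda^n(F^\times)$, the map is unique.

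\textbf{Existence.} To construct $\ts_\nu$, I would use the same splitting made functorial. Since we work over $\mathbb Q$, the group $F^\times$ (as a $\mathbb Q$-vector space after tensoring) splits canonically as $F^\times\cong \mathbb Q\cdot[\pi]\oplus U$, where $U=\mathcal O_\nu^\times$ is the group of units; the projection to the first factor is $\nu$ and the residue map $U\to\overline F_\nu^\times$, $u\mapsto\overline u$, is a group homomorphism. This gives a splitting of $\Lambda^n(F^\times)$ as
$$\Lambda^n(F^\times)\cong\bigl([\pi]\wedge\Lambda^{n-1}U\bigr)\oplus\Lambda^n U.$$
I would then \emph{define} $\ts_\nu$ to be zero on the second summand and, on the first summand, to be the map $[\pi]\wedge u_2\wdw u_n\mapsto \overline{u_2}\wdw\overline{u_n}$ induced by the residue homomorphism $U\to\overline F_\nu^\times$. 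One checks directly that this definition satisfies (1) and (2): condition (1) is immediate from the projection $\nu$, and for (2) one writes $x=\nu(x)\cdot[\pi]+[v]$ with $v$ a unit, expands $x\wedge u_2\wdw u_n$, and observes that the $\Lambda^n U$ part is killed while the $[\pi]$ part contributes exactly $\nu(x)\cdot\overline{u_2}\wdw\overline{u_n}$, as required.

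\textbf{Well-definedness.} The one genuine subtlety, and the step I expect to be the main obstacle, is that the construction in the existence part uses the choice of uniformiser $\pi$, whereas the uniqueness argument shows the resulting map is independent of that choice. I would handle this by checking directly that changing $\pi$ to another uniformiser $\pi'=w\pi$ (with $w$ a unit) does not change the value of $\ts_\nu$: since $[\pi']=[\pi]+[w]$ in the additive notation, a term $[\pi']\wedge u_2\wdw u_n$ differs from $[\pi]\wedge u_2\wdw u_n$ by $[w]\wedge u_2\wdw u_n\in\Lambda^n U$, which lands in the kernel, and $\overline w$ contributes nothing extra because the residue map is a homomorphism. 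Thus the two constructions agree, so $\ts_\nu$ exists and is the unique map satisfying (1) and (2). I would remark that this is precisely the tame symbol on Milnor $K$-theory in the case where all the entries after the first are units, extended multilinearly; the content here is just that requiring it on units plus the normalization (1) pins it down on all of $\Lambda^n(F^\times)$.
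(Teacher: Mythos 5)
Your proof is correct and is essentially the standard argument (the paper itself gives no proof of this proposition, deferring to Goncharov's paper, where the construction is the same: split $F^\times\otimes\mathbb Q$ by the choice of a uniformiser, decompose $\Lambda^n(F^\times)$ accordingly, define the map on the two summands, and check independence of the choice). The only quibble is the word \emph{canonically}: the splitting $F^\times\otimes\mathbb Q\cong\mathbb Q\cdot[\pi]\oplus (U\otimes\mathbb Q)$ does depend on $\pi$, but you correctly identify and resolve exactly this dependence in your final paragraph, so the argument stands.
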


The map $\ts_\nu$ is called \emph{tame symbol map}. Throughout the paper we will repeatedly use the following lemma:

\begin{lemma}
\label{lemma:leibniz_rule_tame_symbol}
Let $(F, \nu)$ be a discrete valuation field. Let $k, n$ be two natural numbers satisfying the condition $k<n$. Let $a_1,\dots, a_n\in F^\t$ such that $\nu(a_{k+1}), \dots, \nu(a_n)=0$. Then the following formula holds:
$$\ts_\nu(a_1\wedge\dots\wedge a_n)=\ts_\nu(a_1\wedge\dots\wedge a_k)\wedge \overline{a_{k+1}}\wedge \dots\wedge \overline{a_n}.$$
\end{lemma}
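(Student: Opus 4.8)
The plan is to reduce the identity to the two characterizing properties of $\ts_\nu$ supplied by Proposition \ref{prop:tame_symbol_classiacal}, by exploiting the splitting of $F^\t$ determined by a choice of uniformiser. Fix a uniformiser $\pi$. Then as an abelian group $F^\t=\mc O_\nu^\t\oplus \langle\pi\rangle$ with $\langle\pi\rangle\cong\Z$; concretely, every $a\in F^\t$ can be written, additively in the group underlying $\L^\bullet$, as $a=u_a+\nu(a)\pi$ with $u_a=a\pi^{-\nu(a)}\in\mc O_\nu^\t$ a unit. Since $\ts_\nu$ is a linear map, it suffices to evaluate it on decomposable wedges, and I will use two immediate consequences of the defining properties: first, $\ts_\nu$ vanishes on $\L^n(\mc O_\nu^\t)$ for $n\ge 1$ (apply property (2) with $x$ a unit, or property (1) when $n=1$); second, for units $v_1,\dots,v_{n-1}$ one has $\ts_\nu(\pi\wedge v_1\wdw v_{n-1})=\ol{v_1}\wdw \ol{v_{n-1}}$ by property (2).

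With the uniformiser fixed, I write each of the arbitrary factors as $a_i=u_i+\nu(a_i)\pi$ for $1\le i\le k$, while $a_{k+1},\dots,a_n$ are already units. Expanding $a_1\wdw a_n$ by multilinearity and using $\pi\wedge\pi=0$, only the summands containing at most one copy of $\pi$ survive. The unique term containing no $\pi$ lies in $\L^n(\mc O_\nu^\t)$ and is therefore annihilated by $\ts_\nu$. Each remaining term carries a single $\pi$ coming from some factor $a_i$ with $i\le k$; moving that $\pi$ to the front at the cost of a sign $(-1)^{i-1}$ and applying the second consequence above expresses $\ts_\nu(a_1\wdw a_n)$ as the signed sum over $i\le k$ of $(-1)^{i-1}\nu(a_i)$ times the wedge $\ol{u_1}\wdw\ol{u_k}\wedge\ol{a_{k+1}}\wdw\ol{a_n}$ with the factor $\ol{u_i}$ deleted.

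Running the identical expansion on $\ts_\nu(a_1\wdw a_k)$ produces the same signed sum, namely $\sum_{i\le k}(-1)^{i-1}\nu(a_i)$ times $\ol{u_1}\wdw\ol{u_k}$ with $\ol{u_i}$ deleted, and wedging this on the right by $\ol{a_{k+1}}\wdw\ol{a_n}$ reproduces exactly the expression from the previous paragraph, term by term. The only genuine bookkeeping is the sign incurred by transporting the single $\pi$ to the front, but this sign depends solely on the position $i$ of the factor and is therefore identical in both computations, so the two sides agree. I expect the one point requiring care to be precisely this sign matching, together with the observation that, although the intermediate decomposition $a_i=u_i+\nu(a_i)\pi$ and hence the individual residues $\ol{u_i}$ depend on the non-canonical choice of $\pi$, the final identity does not; consistency is guaranteed by the uniqueness statement in Proposition \ref{prop:tame_symbol_classiacal}. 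I also note that the case $k=1$ is simply property (2) itself, which serves as a useful sanity check on the signs.
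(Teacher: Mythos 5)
Your argument is correct. Note that the paper itself states this lemma without proof (it is used as a standard consequence of Proposition \ref{prop:tame_symbol_classiacal}), so there is no proof to compare against; your write-up supplies exactly the standard verification. The route you take — split $F^\t$ by a choice of uniformiser $\pi$, expand by multilinearity, kill the all-unit term and the terms with two or more copies of $\pi$, and match the single-$\pi$ terms with signs $(-1)^{i-1}$ for $i\le k$ on both sides — is sound, and the two consequences you extract from Proposition \ref{prop:tame_symbol_classiacal} (vanishing on $\L^n(\mc O_\nu^\t)$ and the evaluation of $\ts_\nu(\pi\wedge v_1\wdw v_{n-1})$) are precisely what the defining properties give. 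Your closing remarks are also right: the sign bookkeeping is identical in the two computations because the transported $\pi$ always sits in a position $i\le k$, and the independence of the final identity from the choice of $\pi$ is automatic since both sides are defined without reference to it.
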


Let $X$ be a smooth algebraic variety and $D\subset X$ be an irreducible divisor. Denote by $\nu_D$ a discrete valuation corresponding to $D$. Abusing notation we write $\ts_D$ for $\ts_{\nu_D}$.

Let $Y$ be a variety. Denote $\L^n(Y):=\L^n(\k(Y)^\t\otimes_\Z\Q)$. Here $\k(Y)^\t$ is the multiplicative group of the field $\k(Y)$ considered as an abelian group.

Let $\ph\colon Y_1\to Y_2$ be a dominant morphism between varieties. Denote by $\ph^*$ the map $\k(Y_2)^\t\to \k(Y_1)^\t$ given by the formula $\ph^*(\alpha)=\alpha\circ \ph$. We will denote by the same symbol the map $\L^n(Y_2)\to \L^n(Y_1)$ given by the formula $\ph^*(\alpha_1\wdw \alpha_n)=\ph^*(\alpha_1)\wdw \ph^*(\alpha_n)$.

Let $D_1\subset Y_1, D_2\subset Y_2$ be irreducible divisors such that $\varphi(D_1)= D_2$. Denote by $e(D_1, D_2)$ the multiplicity of $\ph^{-1}(D_2)$ along $D_1$. This number can be computed as $\ord_{D_1}(\ph^*(\alpha))$, where $\alpha$ is any rational function on $Y_2$ satisfying $\ord_{D_2}(f)=1$. 

\begin{lemma}
\label{lemma:functoriality_of_residue_2}
    Let $\ph\colon Y_1\to Y_2$ be a dominant morphism between smooth varieties. Let $D_1\subset Y_1, D_2\subset Y_2$ be two divisors such that $\ol{\ph(D_1)}=D_2$. Denote by $\ol\ph $ the natural map $D_1\to D_2$. Then for any $a\in\L^n(Y)$ we have:
    $$\ts_{D_1}(\ph^*(a))=e(D_1, D_2)(\ol\ph)^*(\ts_{D_2}(a)).$$
\end{lemma}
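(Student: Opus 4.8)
The plan is to prove the identity by comparing the two $\Q$-linear maps $\L^n(Y_2)\to \L^{n-1}(D_1)$ given by $a\mapsto \ts_{D_1}(\ph^*(a))$ and $a\mapsto e(D_1,D_2)(\ol\ph)^*(\ts_{D_2}(a))$. Since both are $\Q$-linear in $a$, it suffices to check them on pure wedges $a=\alpha_1\wdw \alpha_n$. Fix once and for all uniformisers $\pi_1,\pi_2$ of the valuations $\nu_{D_1},\nu_{D_2}$. Writing each $\alpha_i=c_i\pi_2^{m_i}$ with $c_i$ a unit at $D_2$ and expanding the wedge additively (using $\pi_2\wedge \pi_2=0$, so that only terms containing $\pi_2$ at most once survive), I reduce to two kinds of generators: (a) those in which every entry is a unit at $D_2$, and (b) those of the form $\pi_2\wedge u_2\wdw u_n$ with $u_2,\dots,u_n$ units at $D_2$.

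Before treating the two cases I would record the geometric input. The map $\ph^*$ embeds $\k(Y_2)^\t$ into $\k(Y_1)^\t$, and since $\ol{\ph(D_1)}=D_2$ the generic point $\eta_1$ of $D_1$ is carried to the generic point $\eta_2$ of $D_2$. Hence the restriction of $\nu_{D_1}$ to $\k(Y_2)^\t$ is a nontrivial discrete valuation with center $\eta_2$ on the discrete valuation ring $\mc O_{Y_2,\eta_2}$, and therefore equals $e\cdot \nu_{D_2}$ with $e=e(D_1,D_2)$. In particular $\ph^*$ carries units at $D_2$ to units at $D_1$, and $\ph^*\pi_2=w\,\pi_1^{e}$ for some unit $w$ at $D_1$. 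Passing to residues, the induced map of residue fields $\k(D_2)\to \k(D_1)$ is exactly $(\ol\ph)^*$, so that $\ol{\ph^* u}=(\ol\ph)^*(\ol u)$ for every unit $u$ at $D_2$.

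For generators of type (a) both sides vanish: if all entries are units at $D_2$, then Lemma \ref{lemma:leibniz_rule_tame_symbol} (with $k=1$) together with $\ts_{D_2}(\text{unit})=0$ gives $\ts_{D_2}(a)=0$, and since $\ph^*$ preserves units the same argument on $Y_1$ gives $\ts_{D_1}(\ph^*a)=0$. For a generator of type (b), property (2) of Proposition \ref{prop:tame_symbol_classiacal} gives $\ts_{D_2}(\pi_2\wedge u_2\wdw u_n)=\ol{u_2}\wdw \ol{u_n}$, so the right-hand side is $e\,(\ol\ph)^*(\ol{u_2}\wdw \ol{u_n})$. For the left-hand side I substitute $\ph^*\pi_2=w\,\pi_1^{e}$; additivity of the wedge in the first slot ($[\ph^*\pi_2]=[w]+e[\pi_1]$) yields
$$\ts_{D_1}(\ph^*a)=\ts_{D_1}(w\wedge \ph^* u_2\wdw \ph^* u_n)+e\,\ts_{D_1}(\pi_1\wedge \ph^* u_2\wdw \ph^* u_n).$$
The first summand has all entries units at $D_1$ and vanishes by case (a); the second equals $e\,\ol{\ph^* u_2}\wdw \ol{\ph^* u_n}$ by property (2). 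Applying the residue compatibility $\ol{\ph^* u_i}=(\ol\ph)^*(\ol{u_i})$ turns this into $e\,(\ol\ph)^*(\ol{u_2}\wdw \ol{u_n})$, which matches the right-hand side.

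The main obstacle is the last geometric assertion of the second paragraph: the identification of the induced map on residue fields with $(\ol\ph)^*$, hence the compatibility $\ol{\ph^* u}=(\ol\ph)^*(\ol u)$. I would settle it by localising at $\eta_1,\eta_2$: smoothness makes $\mc O_{Y_1,\eta_1}$ and $\mc O_{Y_2,\eta_2}$ discrete valuation rings, the inclusion $\ph^*(\mc O_{Y_2,\eta_2})\subseteq \mc O_{Y_1,\eta_1}$ holds because $\ph(\eta_1)=\eta_2$, and reduction modulo the maximal ideals is compatible with this inclusion, giving precisely the function-field map $\k(D_2)\to \k(D_1)$ induced by $\ol\ph$. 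Once this is in place, together with the computation of the restricted valuation as $e\cdot\nu_{D_2}$, the rest is the formal bookkeeping with the two defining properties of the tame symbol carried out above.
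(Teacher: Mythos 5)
Your argument is correct and is essentially the paper's own proof, written out in more detail: the paper likewise reduces by linearity to the case $a=\xi_1\wdw\xi_n$ with $\xi_1$ a uniformiser at $D_2$ and the remaining $\xi_i$ units, uses $\ord_{D_1}(\ph^*(\xi_1))=e(D_1,D_2)$ and $\ord_{D_1}(\ph^*(\xi_i))=0$ for $i>1$, and concludes by Lemma \ref{lemma:leibniz_rule_tame_symbol}. The points you spell out that the paper leaves tacit --- the all-units case where both sides vanish, and the identification of the induced residue-field map with $(\ol\ph)^*$ --- are correctly handled and do not change the route.
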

\begin{proof}
    We can assume that $a=\xi_1\wdw \xi_n$ such that $\ord_{D_1}{\xi_1}=1$ and $\ord_{D_1}(\xi_i)=0$ for any $i>1$. The number $\ord_{D_1}(\ph^*(\xi_i))$ is equal to $e(D_1, D_2)$ if $i=1$ and is equal to $0$ otherwise. Now the statement follows from Lemma \ref{lemma:leibniz_rule_tame_symbol}.
\end{proof}

Let us assume that $\ph$ is a morphism of finite degree. We have a finite extension $\k(D_1)\subset \k(D_2)$. Denote its degree by $f(D_1, D_2)$. Denote by $Div(\ph)_0$ the set of divisors contracted under $\ph$. Let $D\subset Y_2$ be an irreducible divisor. Denote by $Div(\ph, D)$ the set of irreducible divisors $D'\subset Y_1$ such that $\ph(D')=D$. There is a bijection between the set $Div(\ph, D)$ and the set of extensions of the discrete valuation $\nu_D$ to the field $\k(Y_1)$. The following statement is well known:

\begin{lemma}
\label{lemma:degree_is_a_sum_multiplicties}
    Let $\ph\colon Y_1\to Y_2$ be an alteration. Let $D\subset Y_2$ be an irreducible divisor. The following formula holds:  
    $$\deg \varphi = \sum\limits_{D'\in Div(\ph, D)}e(D', D)f(D', D).$$
\end{lemma}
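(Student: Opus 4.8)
The plan is to reduce the statement to the classical identity $\sum_i e_if_i=n$ governing the extensions of a discrete valuation in a finite separable field extension. First I would pass to the local picture at the generic point $\eta$ of $D$. Set $A=\mathcal O_{Y_2,\eta}$; since $Y_2$ is smooth, hence regular in codimension one, $A$ is a discrete valuation ring with fraction field $K=\k(Y_2)$, a uniformiser $\pi$ realising $\nu_D$, and residue field $A/\mathfrak m_A=\k(D)$. Let $L=\k(Y_1)$. Because $\ph$ is generically finite, $L/K$ is a finite extension with $[L:K]=\deg\ph$, and it is separable since $\operatorname{char}\k=0$.

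Next I would introduce $B$, the integral closure of $A$ in $L$. By the finiteness of integral closure in a finite separable extension of the fraction field of a Noetherian integrally closed domain, $B$ is a finitely generated $A$-module; being torsion-free over the principal ideal domain $A$ it is free, and since $B\otimes_A K=L$ its rank equals $[L:K]=\deg\ph$. The ring $B$ is a semilocal Dedekind domain whose maximal ideals $\mathfrak P_1,\dots,\mathfrak P_g$ are in bijection with the extensions of $\nu_D$ to $L$. Using that $Y_1$ is smooth (so each local ring $\mathcal O_{Y_1,\eta_{D'}}$ at the generic point of a divisor $D'$ is already an integrally closed discrete valuation ring), together with the bijection recorded above between $Div(\ph,D)$ and the set of these extensions---a bijection for which properness of $\ph$ is essential, as it guarantees via the valuative criterion that every extension of $\nu_D$ has its centre at the generic point of an actual divisor $D'\subset Y_1$---I would identify $\{\mathfrak P_i\}$ with $Div(\ph,D)$ and $B_{\mathfrak P_i}$ with $\mathcal O_{Y_1,\eta_{D'_i}}$.

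It then remains to match the invariants and invoke the counting formula. Unwinding the definition of $e(D',D)$ as $\ord_{D'}(\ph^*\alpha)$ for $\alpha$ with $\ord_D\alpha=1$, and taking $\alpha=\pi$, this is exactly the ramification index $\nu_{\mathfrak P_i}(\pi)$, while $f(D',D)=[\k(D'):\k(D)]$ is the residue degree $[B/\mathfrak P_i:A/\mathfrak m_A]$. Writing $\pi B=\prod_i\mathfrak P_i^{e_i}$ and applying the Chinese Remainder Theorem gives an isomorphism of $A/\mathfrak m_A$-vector spaces $B/\pi B\cong\prod_i B/\mathfrak P_i^{e_i}$; the $\mathfrak P_i$-adic filtration on each factor, whose graded pieces $\mathfrak P_i^{\,j}/\mathfrak P_i^{\,j+1}$ are one-dimensional over $B/\mathfrak P_i$, shows $\dim_{A/\mathfrak m_A}B/\mathfrak P_i^{e_i}=e_if_i$. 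On the other hand, freeness of $B$ of rank $\deg\ph$ gives $\dim_{A/\mathfrak m_A}B/\pi B=\deg\ph$, and comparing the two computations yields $\deg\ph=\sum_i e_if_i$.

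I would expect the only genuinely delicate point to be the identification in the middle step: confirming that the abstract maximal ideals of $B$---equivalently, all valuations of $L$ extending $\nu_D$---correspond precisely to the geometric divisors in $Div(\ph,D)$, with no extension left unaccounted for. This is where properness of the alteration enters decisively, and everything else is either the standard structure theory of the integral closure over a discrete valuation ring or a direct comparison of the definitions of $e$ and $f$ with the algebraic ramification and residue degrees.
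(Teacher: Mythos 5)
The paper offers no proof of this lemma---it is stated as ``well known''---so there is nothing to diverge from; your argument is the standard one and it is correct. Localizing at the generic point of $D$, identifying the maximal ideals of the integral closure $B$ of $\mc O_{Y_2,\eta_D}$ in $\k(Y_1)$ with $Div(\ph,D)$ (this is exactly the bijection with the extensions of $\nu_D$ that the paper records just before the lemma, and properness of $\ph$ is indeed the essential input there: it forces the center of each extension to be the generic point of an honest divisor dominating $D$), and computing $\dim_{\k(D)} B/\pi B$ two ways yields $\deg\ph=\sum e(D',D)f(D',D)$. The only hypothesis you add is normality (you say smoothness) of $Y_1$ and $Y_2$ at the relevant generic points; this is harmless, since the invariants $e(D',D)$ and $f(D',D)$ are only defined under that assumption and it holds in every use of the lemma in the paper.
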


For a divisor $D=\sum\limits_{\alpha}n_\alpha[D_\alpha]$ on $X$, denote by $|D|$ \emph{its support} defined by the formula $\sum\limits_{\alpha} [D_\alpha]$. A divisor $D$ is called \emph{supported on a simple normal crossing divisor} if $|D|$ is a simple normal crossing divisor. Let $x\in X$. A divisor $D$ is called \emph{supported on a simple normal crossing divisor locally at $x$}, if the restriction of this divisor to some open neighborhood of the point $x$ is supported on a simple normal crossing divisor.

\begin{definition}
\label{def:strictly_regular}
    Let $X$ be a smooth variety and $x\in X$. An element $a\in\L(X)^n$ is  called \emph{strictly regular at $x$} if $a$ can be represented  in the form
    $$\sum_\beta n_\beta \alpha_1^{\beta}\wdw \alpha_n^{\beta}$$
    such that the divisor $\sum\limits_{\beta,j}|(\alpha_j^\beta)|$ is supported on a simple normal crossing divisor locally at $x$. 

    The element $a$ is called \emph{strictly regular} if this condition holds for any $x\in X$.
\end{definition}

The definition of Bloch's higher Chow group can be found in \cite{levine1994bloch}.  Denote by $\square^n=(\P^1\bs\{1\})^n$ the algebraic cube of dimension $n$. Subvarieties of $X\t \square^n$ given by the equations of the form $z_i=0,\infty$ are called faces. Let $\wt z_p(X,n)$ be the free abelian group generated by dimension $p$ cycles on $X\t \square^n$ intersecting all faces properly. We have the natural projections $\pi_{i,n}\colon \square^n\to\square^{n-1}$ given by the formula $\pi_{i,n}(t_1,\dots, t_n)=(t_1,\dots\hat{t_i}, \dots, t_n)$. Denote by $D_p(X, n)\subset \wt z_p(X,n)$ a subgroup generated by the images of the maps  $(id\t \pi_{i,n})^*\colon \wt z_{p-1}(X, n-1)\to \wt z_{p}(X, n), i=1,\dots, n$. Finally define $$z_p(X,n)=\wt z_p(X, n)/D_p(X,n).$$ One can define a differential on $z_p(X,n)$ given by the alternating sum of intersections with codimension one faces \cite{levine1994bloch} (we use the convention that the intersection with the face given by the equation $z_1=0$ has positive sign). In this way we get a complex

$$\dots \to z_{p+1}(X, n+1)\to z_p(X,n)\to z_{p-1}(X, n-1)\to\dots$$

It is convenient to pass to motivic grading. As in the case of the complex $\Lambda$, denote this complex by $CH(X, m)$, where $z_p(X,n)$ sits in degree $2\dim X+n-2p$ and $m=\dim X+n-p$.

\subsection{Definition of the complex $\Lambda(X, m)$}
\label{sub:sec:Lambda_definition}
\begin{definition}
    \label{def:Lambda}
    Let $X$ be an algebraic scheme.  Let $m, j\in\Z$. Set $p=\dim X+m-j, n=2m-j$. We have $m=\dim X+n-p$ and $j=2\dim X+n-2p$. Throughout the paper we will use the pairs of integers $(m,j), (n, p)$ interchangeably.
    \end{definition}
    \begin{definition} 
    Denote by $\wt\L(X, m)_j$ a vector space freely generated by  the isomorphism classes of the triples $(Y, a, f)$, where: $Y$ is a variety over $\k$ of dimension $p$, $f\colon Y\to X$ is a proper morphism and $a\in\Lambda^n(\k(Y)^\t\otimes_\Z\Q)$. Denote by $[Y,a,f]\in\wt\L(X,m)_j$ the corresponding element.
Denote by $\L(X, m)_j$ the quotient of $\wt\L(X, m)_j$ by the following relations:
$$[\wt Y,\ph^*(a),f\circ \ph]=(\deg\ph)[Y,a,f].$$
$$[Y,a+b,f]=[Y, a, f] + [Y, b, f].$$
$$[Y,\lambda\cdot a,f]=\lambda[Y, a, f].$$

In this formula $\ph\colon \wt Y\to Y$ is any alteration and $\lambda\in \mathbb Q$. The map $\ph^*$ is defined by the formula
$\ph^*(\alpha_1\wdw \alpha_n)=\ph^*(\alpha_1)\wdw \ph^*(\alpha_n)$. 
\end{definition}
Let us formulate this definition in a more categorical fashion. Denote by ${\mc Var}_{X,p}$ the category of all dimension $p$ proper varieties over $X$ and their proper dominant morphisms. 

Consider the functor $\Lambda^n$ on this category. Its value on some variety $Y\in {\mc Var}_{X,d}$ is equal to the vector space $\L^n(\k(Y)^\t\otimes_{\mb Z}{\mb Q})$. For a morphism $\varphi\colon Y_1\to Y_2$ the corresponding map $\Lambda^n(Y_2)\to \Lambda^n(Y_1)$ is defined by the formula $\Lambda^n(\varphi)(a)=\dfrac 1{\deg{\varphi}}(\ph^*(a))$.

\begin{definition}
    $$\L(X, m)_j=\colim\limits_{Y\in {\mc Var}_{X,p}}\Lambda^n(Y).$$
\end{definition}

As the category ${\mc Var}_{X,d}$ is essentially small this colimit is a well-defined vector space over $\Q$. It is easy to see that the two definitions bellow are equivalent.

For a variety $Y$, a proper morphism $f\colon Y\to X$ and $a\in \Lambda^n(Y)$, we denote the corresponding element in $\L(X, m)_j$ by $[Y, a, f]$. We will abbreviate it to $[Y, a]$ in the case when $X$ is a point or the map $f$ is clear from context. For any proper dominant morphism $\ph\cl Y_1\to Y_2$ we have $[Y_2, a, f]=\dfrac 1{\deg \ph}[Y_1, \ph^*(a), f\circ\ph]$.

Let $X$ be a smooth variety, $f\colon  Y\to X$ be a proper morphism, $a\in \L^n(Y)$ and $D\subset Y$ be an irreducible divisor. Define
$$\ts_D(a, f)=[D, \ts_D(a), \res{f}{D}].$$

Define a map $d\colon \L(X,m)_j\to \L(X,m)_{j+1}$ as follows. Let $[Y, a, f]\in \L(X,m)_j$. Choose an alteration $\ph \colon \wt Y\to Y$ with smooth $\wt Y$ such that the element $\ph^*(a)$ would be strictly regular. Such an alteration exists by results of \cite{deJong1996smoothness}. Define
$$d([Y, a, f])=\dfrac 1{\deg\ph}\sum\limits_{D\subset \wt Y}\ts_D(\ph^*(a), f\circ \ph).$$

In this formula the sum is taken over all irreducible divisors $D\subset \wt Y$. In the next section we will show that this expression does not depend on $\ph$ and is well-defined.

\subsection{The outline of the paper} In Section \ref{sec:Lambda_is_well_defined} we will prove that the complex $\L(X, m)$ is well defined. In Section \ref{sec:Lambda_properties} we will prove some basic properties of this complex: flat pullback, localization sequence etc. Section 4 is devoted to the proof of Theorem \ref{th:Lambda_generators_intro} describing some generators of the complex $\L(X,m)$ in the case when $X$ is a point. In Sections 5 we will prove Theorem \ref{th:Tatoro_is_isomorphism_intro} modulo results from Section 6. Section 6 is devoted to some extension of my main result from \cite{bolbachan_2023_chow}.

\subsection{Acknowledgements}
The author is grateful to A.Levin, D. Rudenko and S. Bloch for useful discussions.

\section{The complex $\L(X,m)$ is well-defined}
\label{sec:Lambda_is_well_defined}

The goal of this section is to prove the following theorem:

\begin{theorem}
\label{th:Lambda(m)_well_defined}
    Let $X$ be an algebraic scheme over $\k$. The complex $\L(X,m)$ is well-defined.
\end{theorem}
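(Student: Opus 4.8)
The statement to establish has two logically independent parts: first, that the formula for $d$ defines a map $\L(X,m)_j\to\L(X,m)_{j+1}$ which is independent of the auxiliary alteration and descends to the quotient defining $\L(X,m)_j$; and second, that $d\circ d=0$. The plan is to treat these in turn. I first record that each individual differential is a \emph{finite} sum: if $a$ is strictly regular on a smooth $\wt Y$ then, by Definition \ref{def:strictly_regular}, the functions occurring in $a$ have divisors supported on one fixed simple normal crossing divisor, and $\ts_D(\cdot)$ vanishes (by Lemma \ref{lemma:leibniz_rule_tame_symbol}) on every $D$ outside this support, so only finitely many $D$ contribute. Linearity of $d$ in $a$ is immediate from the linearity of $\ts_D$ in Proposition \ref{prop:tame_symbol_classiacal}; the real content is independence of the alteration and compatibility with the alteration relation.

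For independence, given two alterations $\ph_1\colon\wt Y_1\to Y$ and $\ph_2\colon\wt Y_2\to Y$ both making the pullback of $a$ strictly regular, I would dominate them by a common smooth alteration $\wt Y_3$ (take a component of $\wt Y_1\times_Y\wt Y_2$ dominating $Y$, normalize, and apply de Jong \cite{deJong1996smoothness}); this reduces the question to comparing the two sides across a single alteration $\psi\colon\wt Y'\to\wt Y$ of smooth varieties with $b:=\ph^*(a)$ and $\psi^*(b)$ both strictly regular. I would then organize the sum over divisors $D'\subset\wt Y'$ by their images. If $D'$ dominates a divisor $D=\ol{\psi(D')}\subset\wt Y$, Lemma \ref{lemma:functoriality_of_residue_2} gives $\ts_{D'}(\psi^*b)=e(D',D)\,(\ol\psi)^*\ts_D(b)$; applying the alteration relation to $\ol\psi\colon D'\to D$ (whose degree is $f(D',D)$) and summing over $D'\in\Div(\psi,D)$ produces the factor $\sum_{D'}e(D',D)f(D',D)=\deg\psi$ by Lemma \ref{lemma:degree_is_a_sum_multiplicties}. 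After dividing by the relevant degrees this matches the $D$-term computed on $\wt Y$, so the contributions of non-contracted divisors agree.

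The main obstacle is the remaining contribution of the \emph{contracted} divisors $D'\subset\wt Y'$, those with $\operatorname{codim}\psi(D')\ge 2$: these occur on $\wt Y'$ but have no counterpart on $\wt Y$, so the desired identity forces their total contribution to vanish. Already a single blow-up of a point on a surface with $b=\xi_1\wedge\xi_2$ produces an exceptional $\P^1$ carrying $\ts_E(\psi^*b)$ equal to a coordinate $v$, and one needs $[\P^1,v,\text{const}]=0$; this follows from the relations, since the inversion $v\mapsto v^{-1}$ (an isomorphism fixing the constant map) gives $[\P^1,v,\text{const}]=[\P^1,-v,\text{const}]=-[\P^1,v,\text{const}]$, which is $0$ over $\Q$. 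I would prove the general vanishing in this spirit: a contracted $D'$ maps to $X$ through the lower-dimensional $\psi(D')$, so $D'\to X$ has positive-dimensional generic fibres, and $\ts_{D'}(\psi^*b)$ can be trivialized by a scaling or inversion automorphism acting along the contracted fibre directions, which preserves the map to $X$ while negating or rescaling the class. Establishing this cleanly in full generality — not merely in a blow-up chart — is the delicate point, and it simultaneously yields compatibility with the relation $[\wt Y,\ph^*a,f\circ\ph]=(\deg\ph)[Y,a,f]$, since an alteration of $\wt Y$ is again an alteration of $Y$.

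Finally, for $d^2=0$, I would fix a generator $[Y,a,f]$ and choose a single smooth alteration on which $a$ and all of its first residues are strictly regular, so that $d^2$ becomes a finite sum of iterated tame symbols $\ts_W\ts_D$ over flags $W\subset D$ with $\operatorname{codim}W=2$. Grouping by the codimension-two locus $W$, the inner sum runs over the divisors $D$ containing $W$; on a surface transverse to the generic point of $W$ this is precisely the configuration governed by the Parshin reciprocity law \cite{parshin1975class,bolbachan_2023_chow}, which asserts that this sum of iterated residues vanishes. Summing over all $W$ then gives $d^2=0$. The principal care here is to arrange, again via de Jong's alterations, that every relevant divisor configuration is simple normal crossing, so that the local two-dimensional model to which reciprocity applies is actually available; granting the reciprocity law, the cancellation is formal.
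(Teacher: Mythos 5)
Your overall architecture coincides with the paper's: dominate two alterations by a common smooth one, match the non-contracted divisors via Lemma \ref{lemma:functoriality_of_residue_2} and Lemma \ref{lemma:degree_is_a_sum_multiplicties}, reduce the whole problem to the vanishing of the contributions of contracted divisors, and deduce $d^2=0$ from a Parshin-type reciprocity statement proved by a local normal-crossing computation. However, there is a genuine gap exactly where you flag ``the delicate point'': the vanishing $\ts_E(\psi^*b,\cdot)=0$ for a contracted divisor $E$ is the technical heart of the theorem, and your proposed mechanism (an inversion or scaling automorphism ``along the contracted fibre directions'') is only verified in the single blow-up example. In general $E$ is not a product over $W=\ol{\psi(E)}$ and the residue is not a coordinate in any chart, so there is no automorphism to invoke directly. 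The paper closes this gap in three steps you do not supply. First, Lemma \ref{lemma:char_of_strictly_regular}: strict regularity lets one write $b$ locally as $\xi_1\wdw\xi_k\wedge u_{k+1}\wdw u_n$ with the $\xi_i$ part of a regular system of parameters, so the SNC hypothesis forces $\operatorname{codim}W\ge k$ whenever $W$ lies on $k$ of the divisors; this yields the crucial inequality $k-1\le\dim E-\dim W$ (Lemma \ref{lemma:differential_normal_crossing_zero}). Second, the residue at $E$ then has the form $\ts_E(\psi^*(\xi_1\wdw\xi_k))\wedge g_1^*(c|_W)$, i.e.\ only $k-1$ ``new'' wedge factors together with pullbacks from $W$. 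Third, the degenerate-cycle Lemmas \ref{lemma:degenerate_cycles} and \ref{lemma:about_degenerate_cycles} kill such elements: one projects $E$ to $W\t(\P^1)^{k-1}$ by the functions occurring in the residue, and either the image has too small dimension (then a degree-$l$ self-cover of $(\P^1)^s$ forces the class to equal $1/l$ times itself, hence $0$), or the image is all of $W\t(\P^1)^{k-1}$ and the inversion $t_1\mapsto 1/t_1$ negates the class. Your inversion trick is the second branch of this case analysis, but without the dimension count coming from strict regularity and without the first branch the argument does not close.

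A smaller remark on $d^2=0$: the reciprocity law you invoke as a black box is, in the paper, a statement \emph{in} $\L(X,m)_{j+2}$ (Theorem \ref{th:Parshin_reciprocity_law}), proved by the same local SNC computation showing that for a fixed codimension-two $D'$ exactly two divisors $D\supset D'$ contribute, with opposite signs; one also needs that $\ts_D(a)$ is again strictly regular on $D$ (restriction of an SNC divisor to a component is SNC) in order to identify $d(\ts_D(a,f))$ with the sum of iterated residues. You gesture at both points, and granting the vanishing discussed above this part of your argument is essentially the paper's.
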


\subsection{Some vanishing statements}
\label{sub:sec:vanishing_statements}
The goal of this subsection is to prove the following theorem:
\begin{theorem}
\label{th:differential_strictly_regular_elements}
    Let $Y$ be a smooth variety and $f\colon Y\to X$ be a proper morphism. Let $a\in \L^n(Y)$ be strictly regular. Then for any alteration $\varphi \colon \wt Y\to Y$ and any divisor $E\subset \wt Y$ contracted under $\varphi$ we have $\ts_E(\varphi^*(a), f\circ \ph)=0\in \L(X, m)_{j+1}$. 
\end{theorem}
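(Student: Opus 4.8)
The plan is to reduce the assertion to a computation at the generic point $\eta_Z$ of $Z:=\ol{\ph(E)}$ and then to exploit in an essential way that $E$ is contracted, i.e. $\dim Z<\dim E$, by producing self-maps over $X$ and using the $\Q$-coefficients. First I would observe that $\ts_E(\ph^*(a),f\circ\ph)=[E,\ts_E(\ph^*(a)),\res{f\circ\ph}{E}]$ depends only on the behaviour of $a$ near $\eta_E$, hence near $\eta_Z=\ph(\eta_E)$. Using strict regularity, I would represent $a$ near $\eta_Z$ as $\sum_\beta n_\beta\,\alpha_1^\beta\wdw\alpha_n^\beta$ with $\sum_{\beta,j}|(\alpha_j^\beta)|$ a simple normal crossing divisor, so that each $\alpha_j^\beta=u\cdot\prod_i t_i^{c_i}$, where $t_1,\dots,t_r$ is the part of a regular system of parameters cutting the components through $\eta_Z$ and $u$ is a unit at $\eta_Z$. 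If $Z$ is not contained in the support of $a$ (no $t_i$ occurs), then $\ph^*(a)$ is a wedge of units along $E$ and $\ts_E(\ph^*(a))=0$ by Proposition \ref{prop:tame_symbol_classiacal}; so I may assume $\dim Z\le\dim Y-2$ and that some $\ph^*(t_i)$ vanishes along $E$.

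Next I would compute the shape of the residue. Writing $e_i=\ord_E(\ph^*t_i)>0$, a local equation $\pi$ for $E$, and the leading coefficients $\theta_i:=\ol{(\ph^*t_i)/\pi^{e_i}}\in\k(E)^\times$, an application of Proposition \ref{prop:tame_symbol_classiacal} and Lemma \ref{lemma:leibniz_rule_tame_symbol} shows that every function occurring in $\ts_E(\ph^*(a))$ has the form $(\ol\ph)^*(v)\cdot\prod_i\theta_i^{c_i}$ with $v\in\k(Z)^\times$. In other words $\ts_E(\ph^*(a))$ lies in $\L^{n-1}$ of the subgroup $G\subset\k(E)^\times$ generated by $(\ol\ph)^*\k(Z)^\times$ and the $\theta_i$, and modulo the base subgroup $(\ol\ph)^*\k(Z)^\times$ each such function is a monomial in the $\theta_i$.

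To prove vanishing I would use the relation $[Y_2,b]=\tfrac1{\deg\psi}[Y_1,\psi^*b]$ for alterations together with the $\Q$-structure, and treat separately the part pulled back from $Z$ and the part involving the $\theta_i$. For a summand $(\ol\ph)^*\delta$ with $\delta\in\L^{n-1}(\k(Z)^\times)$, I would choose a transcendence basis $\tau_1,\dots,\tau_d$ of $\k(E)/\k(Z)$, where $d=\dim E-\dim Z\ge1$; this yields a generically finite dominant map $E\dashrightarrow Z\t(\P^1)^d$ over $X$ (every map here factors through $Z$, which is proper over $X$), along which $(\ol\ph)^*\delta$ is pulled back from the factor $Z$. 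The self-map $\id_Z\t[N]$, with $[N]$ the degree-$N$ endomorphism of one $\P^1$, fixes this class, so the defining relation gives $[\,\cdot\,]=\tfrac1N[\,\cdot\,]$ and the class vanishes. For a summand genuinely involving the $\theta_i$, I would pass to the toric model $Z\t\mb G_m^{\rho}$ carrying the $\theta_i$ and use the coordinate inversions $\theta\mapsto\theta^{-1}$: these are degree-one self-maps fixing $(\ol\ph)^*\k(Z)^\times$, under which any term containing a torus coordinate is sent to its negative, so that its class is $2$-torsion and hence zero over $\Q$.

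The hard part will be the last step. Realizing the inversions as honest self-maps requires that, modulo the base $(\ol\ph)^*\k(Z)^\times$, the leading coefficients $\theta_i$ form a genuine system of torus coordinates, with no spurious algebraic relations that could otherwise create a nonvanishing class of dilogarithmic type such as $[\P^1,t\wedge(1-t)]$. Verifying this — equivalently, reducing the generic fibre of $\ol\ph\colon E\to Z$ to a toric situation on which the $\theta_i$ become coordinates — is exactly where the simple normal crossing hypothesis (strict regularity) enters essentially, since it is what forces every function appearing in the residue to be a base function times a monomial in the transverse parameters $t_i$. The reduction in the first step and the pure-base vanishing are routine given the tame-symbol formalism of Proposition \ref{prop:tame_symbol_classiacal} and Lemma \ref{lemma:leibniz_rule_tame_symbol}, so that this toric independence of the $\theta_i$ is the single genuine obstacle.
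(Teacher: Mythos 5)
Your reduction to the generic point of $Z=\ol{\ph(E)}$, the computation of the shape of the residue, and the vanishing of the pure-base summands via degree-$N$ self-maps of $\P^1$-factors along a transcendence basis of $\k(E)/\k(Z)$ all coincide with the mechanisms the paper actually uses (Lemma \ref{lemma:char_of_strictly_regular} and Lemma \ref{lemma:degenerate_cycles}). But the step you flag as ``the single genuine obstacle'' is a real gap, and the route you propose for closing it --- verifying that the leading coefficients $\theta_i$ form a genuine system of torus coordinates over $\k(Z)$ --- is not something strict regularity gives you and is false in general: nothing prevents algebraic relations such as $\theta_2=1-\theta_1$ among leading coefficients of an alteration, so a summand of the residue can perfectly well be of dilogarithmic type on a proper subvariety of $Z\t(\mb G_m)^\rho$. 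The missing idea is that you do not need independence of the $\theta_i$; you need a dichotomy. Consider the rational map from $E$ to $Z\t(\P^1)^{k-1}$ defined by the $k-1$ non-base functions in a given summand of the residue, where $k$ is the number of parameters $t_i$ with $\ord_E(\ph^*t_i)>0$. Strict regularity forces the proper-intersection bound $\dim Z\le \dim Y-k$, hence $k-1\le \dim E-\dim Z$, so the target has dimension at most $\dim E$. Either the closure $W'$ of the image has $\dim W'=\dim E$, which forces $W'=Z\t(\P^1)^{k-1}$ and your inversion argument applies verbatim; or $\dim W'<\dim E$, in which case the class is pulled back along a dominant proper map with positive-dimensional generic fibre and dies by exactly the same $[N]$-self-map trick you used for the pure-base part, \emph{whatever} the class looks like --- degenerate classes vanish unconditionally, so the dangerous relations among the $\theta_i$ never hurt you. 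This dichotomy is what Lemma \ref{lemma:about_degenerate_cycles} of the paper packages, and it is applied in Lemma \ref{lemma:differential_normal_crossing_zero}.

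Two further remarks on organization. First, the paper avoids naming the $\theta_i$ at all: it splits $a=b\wedge c$ with $b=\xi_1\wdw\xi_k$ the part vanishing along $Z$ and $c$ a wedge of units there, uses Lemma \ref{lemma:leibniz_rule_tame_symbol} to write $\ts_E(\ph^*a)=\ts_E(\ph^*b)\wedge\res{\ph^*c}{E}$, and only records that the first factor has degree $k-1$ while the second is pulled back from $Z$; the general vanishing lemma then runs on the dimension count alone. Second, you locate the role of the simple normal crossing hypothesis in the monomial shape of the residue, but its essential contribution is the inequality $k-1\le\dim E-\dim Z$: without it the residue could have more non-base wedge factors than the fibre dimension of $E\to Z$, the target $Z\t(\P^1)^{k-1}$ could have dimension larger than $\dim E$, and neither branch of the dichotomy would be available --- this is precisely the situation where a class like $[\P^1,\,t\wedge(1-t)]$ could survive.
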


To prove this theorem we need several lemmas.

\begin{lemma}
    \label{lemma:degenerate_cycles}
    Let $Y$ and $R$ be varieties. Let $f_1\colon Y\to R, f_2\colon R\to X$ be proper morphisms and assume that $f_1$ is dominant. Assume that the corresponding extension $\k(R)\subset \k(Y)$ has positive transcendent degree.  Then for any $a\in\L^n(R)$, we have $$[Y,f_1^*(a), f_2\circ f_1]=0.$$
\end{lemma}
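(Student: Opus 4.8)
The plan is to kill the class $[Y,f_1^*(a),f_2\circ f_1]$ by exhibiting it as a fixed vector of a self-map of degree $N\ge 2$, using crucially that we work with $\Q$-coefficients. The heart of the matter is the case of a product $R\t\P^1$ with $a$ pulled back from the first factor; the general statement is then reduced to this case by factoring $f_1$ through such a product and inducting on the relative transcendence degree $d=\dim Y-\dim R\ge 1$.

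\emph{The key case.} Suppose $Y=R\t\P^1$, write $\mathrm{pr}\cl R\t\P^1\to R$ for the projection, $h=f_2\circ\mathrm{pr}$, and consider the class $[R\t\P^1,\mathrm{pr}^*(a),h]$. Let $[N]\cl\P^1\to\P^1$ be the $N$-th power map, of degree $N\ge 2$, and set $\psi=\mathrm{id}_R\t[N]$. Then $\psi$ is a proper dominant morphism with $\deg\psi=N$, and since $\mathrm{pr}\circ\psi=\mathrm{pr}$ and $h\circ\psi=h$, it is an endomorphism of the object $(R\t\P^1,h)$ in ${\mc Var}_{X,p}$ satisfying $\psi^*(\mathrm{pr}^*(a))=\mathrm{pr}^*(a)$. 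The defining (alteration) relation of $\L(X,m)_j$ then gives
$$[R\t\P^1,\mathrm{pr}^*(a),h]=\tfrac1N[R\t\P^1,\psi^*(\mathrm{pr}^*(a)),h\circ\psi]=\tfrac1N[R\t\P^1,\mathrm{pr}^*(a),h],$$
whence $(1-\tfrac1N)[R\t\P^1,\mathrm{pr}^*(a),h]=0$ and the class vanishes. This is the only place where the $\Q$-structure is essential.

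\emph{Reduction and induction.} For the general statement I would induct on $d\ge 1$. Choose $t\in\k(Y)$ transcendental over $\k(R)$; then $\k(R)(t)\subset\k(Y)$ is the function field of $R\t\P^1$ (with $t$ the coordinate on the $\P^1$-factor), and the field inclusions assemble into a commutative triangle of dominant rational maps $Y\ard R\t\P^1\xrightarrow{\mathrm{pr}}R$ with composite $f_1$. Resolving the indeterminacy of $Y\ard R\t\P^1$ by a proper birational morphism $\pi\cl\wt Y\to Y$ (possible in characteristic zero) produces a genuine dominant morphism $g\cl\wt Y\to R\t\P^1$ with $\mathrm{pr}\circ g=f_1\circ\pi$; it is proper because $\wt Y$ is proper over $X$ while $R\t\P^1$ is separated over $X$. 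Setting $a'=\mathrm{pr}^*(a)\in\L^n(R\t\P^1)$ and $h=f_2\circ\mathrm{pr}$, one has $g^*(a')=(\mathrm{pr}\circ g)^*(a)=(f_1\circ\pi)^*(a)=\pi^*(f_1^*(a))$ and $\deg\pi=1$, so $[Y,f_1^*(a),f_2\circ f_1]=[\wt Y,g^*(a'),h\circ g]$. If $d=1$ then $g$ is generically finite, the alteration relation rewrites the right-hand side as $(\deg g)[R\t\P^1,a',h]$, and this vanishes by the key case. If $d\ge 2$ then $\k(\wt Y)/\k(R\t\P^1)$ still has positive transcendence degree $d-1$, so the right-hand side vanishes by the induction hypothesis applied to $g\cl\wt Y\to R\t\P^1$, $h\cl R\t\P^1\to X$ and $a'$.

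\emph{Main obstacle.} The only genuinely non-formal step is the geometric factorization: resolving the rational map $Y\ard R\t\P^1$ to an honest morphism and verifying that the resulting $g$ is proper, dominant, and of the expected relative dimension, so that the induction closes. Once the factorization is in place, the vanishing is forced entirely by the scaling endomorphism $\psi$ together with the $\Q$-linearity of the defining relations.
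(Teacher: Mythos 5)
Your proof is correct and follows essentially the same route as the paper: factor $f_1$ through a product with projective line(s) via a transcendence basis and resolution of indeterminacy, then kill the resulting class using a degree $N\ge 2$ self-map together with the $\Q$-linearity of the defining relations. The only cosmetic difference is that the paper factors through $R\t(\P^1)^s$ in one step using a full transcendence basis, whereas you adjoin one $\P^1$ at a time and induct on the transcendence degree.
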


\begin{remark}
    The cycles of this form correspond to so-called degenerate cycles in higher Chow group. So this lemma shows that these elements are zero in $\Lambda(X,m)$ automatically and we do not need to take quotient by these elements.
\end{remark}

\begin{proof}
    Denote by $s$ the transcendent degree of the extension $\k(R)\subset \k(Y)$. Choose a transcendence basis $t_1,\dots, t_s$ of $\k(Y)$ over $\k(R)$. It is easy to see that the extension
    $$\k(R)(t_1,\dots, t_s)\subset \k(Y)$$ is finite. This implies that there is a rational map $g\colon Y\ard R\t(\P^1)^s$ of finite degree such that $f_1=\pi\circ g$, where $\pi\colon R\t(\P^1)^s\to R$ is the natural projection. There is a proper birational morphism $\ph\colon \wt Y\to Y$ such that the map $g_2=g\circ\ph$ is regular. As $\pi \circ g_2=f_1\circ \ph$ is proper the morphism $g_2$ is proper. We have:
    \begin{align*}
        [Y,f_1^*(a), f_2\circ f_1]=[\wt Y,\ph^*f_1^*(a), f_2\circ f_1\circ \ph]=\\
        [\wt Y,g_2^*\pi^*(a), f_2\circ \pi\circ g_2]=\deg (g_2)[T\t(\P^1)^s, \pi^*(a), f_2\circ\pi].       
    \end{align*}

    Thus we have reduced the statement to the case when $Y=R\t(\P^1)^s$ and $f_1=\pi$. Let $\theta$ be some morphism $\theta\colon (\P^1)^s\to (\P^1)^s$ of degree $l, l>1$. We have
    \begin{align*}
        &[R\t (\P^1)^s, \pi^*(a), f_2\circ\pi]=\\
        &1/l[R\t (\P^1)^s,  (id\t\theta)^*\circ\pi^*(a),f_2\circ\pi\circ (id\t\theta)]=\\
        &1/l[R\t (\P^1)^s,\pi^*(a), f_2\circ\pi].
    \end{align*}

    It follows that $[R\t(\P^s),\pi^*(a), f_2\circ \pi]=0$.
\end{proof}

\begin{lemma}
\label{lemma:about_degenerate_cycles}
Let $Y$ and $R$ be varieties. Let $f_1\colon Y\to R$, $f_2\colon R\to X$ be two proper morphisms and assume that $f_1$ is dominant.  Assume that $n\leq \dim Y-\dim R$ and in the case $n =\dim Y-\dim R$ we additionally assume that $n> 0$. Let $n'\geq 0$. For any $a\in \Lambda^n(Y)$ and $b\in \Lambda(n')(R)$ we have
    $$[Y,a\wedge f_1^*(b), f_2\circ f_1]=0.$$
   \end{lemma}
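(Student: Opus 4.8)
The plan is to imitate and then refine the proof of Lemma \ref{lemma:degenerate_cycles}. First, using the additivity and $\Q$-linearity relations in the first argument, I would reduce to the case where $a$ is decomposable, $a=\alpha_1\wdw \alpha_n$ with $\alpha_i\in\k(Y)^\t$. Set $s=\dim Y-\dim R$, which equals the transcendence degree of $\k(R)\subset\k(Y)$, and let $d=\mathrm{trdeg}_{\k(R)}\k(R)(\alpha_1,\dots,\alpha_n)\le n$. The point of working with a decomposable $a$ is to realize the $\alpha_i$ as coordinate functions.

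Next I would present $a\wedge f_1^*(b)$ as a pullback. The tuple $(f_1,\alpha_1,\dots,\alpha_n)$ defines a dominant rational map $g\colon Y\ard W$, where $W\subset R\t(\P^1)^n$ is the closure of the image; here $\pi_W\colon W\to R$ is proper and $\dim W=\dim R+d$. Exactly as in the proof of Lemma \ref{lemma:degenerate_cycles}, choose a proper birational $\ph\colon \wt Y\to Y$ resolving $g$, so that $g_2=g\circ\ph$ is a regular morphism; since $\pi_W\circ g_2=f_1\circ\ph$ is proper and $\pi_W$ is separated, $g_2$ is proper. Writing $x_1,\dots,x_n$ for the coordinates on $(\P^1)^n$ and $c=x_1\wdw x_n\wedge \pi_W^*(b)\in\Lambda^{n+n'}(W)$, one has $g_2^*(c)=\ph^*(a\wedge f_1^*(b))$ and $\deg\ph=1$, hence $[Y,a\wedge f_1^*(b),f_2\circ f_1]=[\wt Y, g_2^*(c), (f_2\circ\pi_W)\circ g_2]$.

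Now I would split into two cases. If $d<s$ then $\dim W<\dim\wt Y$, so the extension $\k(W)\subset \k(\wt Y)$ has positive transcendence degree and Lemma \ref{lemma:degenerate_cycles}, applied to $\wt Y\xrightarrow{g_2} W\xrightarrow{f_2\circ\pi_W}X$ with $c\in\Lambda^{n+n'}(W)$, forces the class to vanish. This already covers every case with $n<s$, since then $d\le n<s$. If instead $d=s$, then from $d\le n\le s=d$ we get $n=s=d>0$; in particular the $\alpha_i$ are algebraically independent over $\k(R)$, so $W=R\t(\P^1)^n$ and $c=x_1\wdw x_n\wedge\pi^*(b)$. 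Here I would apply the automorphism $\iota$ of $R\t(\P^1)^n$ inverting the first coordinate, $x_1\mapsto 1/x_1$. It is an isomorphism of degree one commuting with the projection to $R$, and $\iota^*(c)=(-x_1)\wdw x_n\wedge\pi^*(b)=-c$, since inversion is the inverse in the multiplicative group $\k(W)^\t$ written additively. The defining relation then gives $[W,c,f_2\circ\pi_W]=[W,\iota^*(c),f_2\circ\pi_W]=-[W,c,f_2\circ\pi_W]$, so $2[W,c,f_2\circ\pi_W]=0$ and the class is zero over $\Q$.

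I expect the main obstacle to be precisely the boundary case $d=s=n$. There the degree-raising self-covers used in Lemma \ref{lemma:degenerate_cycles} give no information, because such a cover multiplies $c$ by exactly its degree, and the resulting identity is tautological. The crucial observation is that coordinate inversion is a degree-one automorphism which \emph{negates} rather than rescales $c$; this is what produces $2$-torsion and hence vanishing over $\Q$, and it is exactly the step that requires the hypothesis $n>0$ (so that there is a coordinate to invert). The hypothesis $n\le \dim Y-\dim R$ enters in guaranteeing that in the generically finite case $W$ is the full product $R\t(\P^1)^n$, which is what makes the inversion an automorphism of $W$.
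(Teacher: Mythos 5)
Your proposal is correct and follows essentially the same route as the paper: reduce to decomposable $a$, map $Y$ to the closure $W$ of the image in $R\t(\P^1)^n$, dispose of the case $\dim W<\dim Y$ via Lemma \ref{lemma:degenerate_cycles}, and kill the full-dimensional case $W=R\t(\P^1)^n$ with the degree-one inversion $t_1\mapsto 1/t_1$, which negates the class. Your explicit bookkeeping with $d=\mathrm{trdeg}_{\k(R)}\k(R)(\alpha_1,\dots,\alpha_n)$ and the remark on why the self-covers of Lemma \ref{lemma:degenerate_cycles} are useless in the boundary case are only organizational refinements of the paper's argument.
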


\begin{remark}
    It can be deduced from this lemma that for $m< 0$ we have
    $$\Lambda(m)_j=0.$$
    The corresponding statement for Bloch's higher Chow group is clear.
\end{remark}

\begin{proof}
    We can assume that $a=\xi_1\wdw \xi_n$. Consider a rational map $g\colon Y\ard R\t(\P^1)^n$ given by $$y\mapsto (f_1(y), \xi_1(y),\dots, \xi_n(y)).$$
    
    Denote the closure of its image by $W$. There is a proper birational morphism $\ph\colon \wt Y\to Y$ such that $g\circ \ph$ is regular. We have:
    $$[Y,a\wedge f_1^*(b), f_2\circ f_1]=[\wt Y,\ph^*(a)\wedge (f_1\circ \ph)^*(b)), f_2\circ (f_1\circ \ph)].$$

    So we can assume that $g$ is regular. Denote by $\wt g$ the natural map $\wt g\colon Y\to W$ and let $i_W\colon W\to R\t(\P^1)^n$ be the canonical embedding. Denote by $\pi_{1}\colon R\t(\P^1)^n\to R$ the  projection to the first factor. Let $\pi_{2,i}\colon R\t(\P^1)^n\to \P^1$ be the projection to the $i$-th $\P^1$. Let $\wt \pi_1=\pi_1\circ i_W, \wt \pi_{2,i}=\pi_{2,i}\circ i_W$.
    
    Let $i\in \{1,\dots, n\}$. Define a rational function $\wt t_i\in\k(W)$. If $\xi_i$ is a constant, set $\wt t_i=\xi_i$. Otherwise, the map $\wt \pi_{2,i}$ is dominant and we can define $\wt t_i=\wt \pi_{2,i}^*(t)$, where $t$ is the canonical coordinate on $\P^1$. As $f_1$ is dominant, the map $\wt\pi_1$ is dominant and we have the well-defined element $\wt\pi_1^*(b)$.
    
    We have
$$a\wedge f_1^*(b)=(\wt g)^*(\wt t_1\wdw \wt t_n\wedge \wt\pi_1^*(b)).$$
    
    Consider the following two cases:

    \begin{enumerate}
        \item Let $n < \dim Y-\dim R$. We have $\dim W\leq \dim R+n<\dim Y$ and so $\dim W<\dim Y$. In this case the statement follows from the previous lemma.
        \item Let $n=\dim Y-\dim R$. If $\dim W<\dim Y$ then the proof is the same as in the previous item. Assume that $\dim W=\dim R+n=\dim Y$. This implies that $W=R\t(\P^1)^n$. We get:
        $$[Y,a\wedge f_1^*(b), f_2\circ f_1]=(\deg g)[R\t(\P^1)^n, (t_1\wdw t_n)\wedge \pi_1^*(b), f_2\circ \pi_1].$$
        So we have reduced the statement to the case when $Y=R\t(\P^1)^n, a=t_1\wdw t_n$ and $f_1=\pi_1$.
    We have
    \begin{align*}
        &[R\t(\P^1)^n, (t_1\wdw t_n)\wedge \pi_1^*(b), f_2\circ \pi_1]=\\&[R\t(\P^1)^n, (1/t_1\wdw t_n)\wedge \pi_1^*(b), f_2\circ \pi_1]=\\&-[R\t(\P^1)^n, (t_1\wdw t_n)\wedge \pi_1^*(b), f_2\circ \pi_1].
    \end{align*}
   So $[R\t(\P^1)^n, (t_1\wdw t_n)\wedge \pi_1^*(b), f_2\circ \pi_1]=0$.
\end{enumerate}
\end{proof}

We recall that we have given the definition of $\ts_D(a, f)$ in the previous section.

\begin{lemma}
\label{lemma:differential_normal_crossing_zero}
    Let $Y$ be a variety, $f\colon Y\to X$ be a proper morphism and $\xi_1,\dots, \xi_n\in \k(Y)$. Let $U$ be some open subset of $Y$. Denote by $D_i^0, D_i^\infty$ the divisors of zeros and poles of the functions $\xi_i$ on $U$. Assume that for any $1\leq n_1<n_2\dots <n_l\leq n$ and any $m_1,\dots, m_l\in \{0,\infty\}$, the codimension of each irreducible component of $D_{n_1}^{m_1}\cap\dots \cap D_{n_l}^{m_l}$ is equal to $l$. Let $\wt Y$ be a smooth variety and  $\ph\colon \wt Y\to Y$ be an alteration. Then for any divisor $E\subset \wt Y$ contracted under $\ph$ such that $\ph(E)\cap U\ne\varnothing$ we have $$\ts_E(\ph^*(\xi_1\wdw \xi_n), f\circ \ph)=0\in \L(X, m)_{j+1}.$$

\end{lemma}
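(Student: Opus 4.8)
The goal is to show that when a divisor $E\subset\wt Y$ is contracted by $\ph$ and meets the good open locus $U$ where the $\xi_i$ form a normal-crossing configuration, the contribution $\ts_E(\ph^*(\xi_1\wdw\xi_n))$ vanishes in $\L(X,m)_{j+1}$. The plan is to reduce everything to the degeneracy Lemma \ref{lemma:about_degenerate_cycles} by showing that the tame symbol along $E$ is a pullback of a form from a lower-dimensional base over which $\bar\ph$ factors with positive transcendence degree, together with a genuine residue coming from the finitely many $\xi_i$ that have a zero or pole along $E$.

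\emph{Step 1: localize and compute orders along $E$.} First I would work near the generic point of $E$ and set $r=\ord_E(\ph^*\xi_i)$ for each $i$, reordering so that $\xi_1,\dots,\xi_k$ are exactly those with $\ord_E(\ph^*\xi_i)\ne 0$ (equivalently, $\bar\ph(E)\subset|D_i^0|\cup|D_i^\infty|$), while $\ord_E(\ph^*\xi_i)=0$ for $i>k$. By the normal-crossing hypothesis on $U$ the components $D_i^{m_i}$ meet with the expected codimension; since $E$ is contracted, $\bar\ph(E)=:Z$ has codimension at least $2$ in $Y$, and $Z\cap U$ lies in the intersection of the $k$ divisors $D_1^{m_1},\dots,D_k^{m_k}$ corresponding to the indices with a pole or zero. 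The normal-crossing condition then forces $k\le\operatorname{codim} Z$, but more importantly it forces these $k$ divisors to be in general position, which is what lets me control the residue cleanly.

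\emph{Step 2: apply the Leibniz rule for the tame symbol.} Using Lemma \ref{lemma:leibniz_rule_tame_symbol} with the units $\ph^*\xi_{k+1},\dots,\ph^*\xi_n$ (which have $\ord_E=0$), I get
$$\ts_E\bigl(\ph^*(\xi_1\wdw\xi_n)\bigr)=\ts_E\bigl(\ph^*(\xi_1\wdw\xi_k)\bigr)\wedge\ol{\ph^*\xi_{k+1}}\wedge\dots\wedge\ol{\ph^*\xi_n}.$$
Here $\ol{\ph^*\xi_i}$ is the restriction of $\xi_i\circ\ph$ to $E$, i.e. the pullback along $\bar\ph\colon E\to Z$ of the rational function $\res{\xi_i}{Z}$ on $Z$. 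Thus the last $n-k$ factors are pulled back from $Z$, while the first factor $\ts_E(\ph^*(\xi_1\wdw\xi_k))$ is an element of $\L^{k-1}(E)$.

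\emph{Step 3: identify the degenerate structure and conclude.} The point is that $\res{\xi_{k+1}}{Z},\dots,\res{\xi_n}{Z}$ live on $Z$, and $\bar\ph\colon E\to Z$ is a dominant proper morphism with $\dim E=\dim Z+(\operatorname{codim}E-\operatorname{codim}Z)$, so the extension $\k(Z)\subset\k(E)$ has \emph{positive} transcendence degree because $E$ is contracted (a divisor in $\wt Y$ mapping onto the lower-dimensional $Z$). Writing $b=\res{\xi_{k+1}}{Z}\wedge\dots\wedge\res{\xi_n}{Z}\in\L^{n-k}(Z)$ and $a'=\ts_E(\ph^*(\xi_1\wdw\xi_k))\in\L^{k-1}(E)$, the element $\ts_E(\ph^*(\xi_1\wdw\xi_n),f\circ\ph)$ equals $[E,\,a'\wedge\bar\ph^*(b),\,(f\circ\ph)|_E]$. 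Setting $R=Z$, $f_1=\bar\ph$, and applying Lemma \ref{lemma:about_degenerate_cycles} with the form $a'$ of degree $k-1$ and the pulled-back form $b$, this vanishes provided the numerical hypotheses hold: since $\dim E-\dim Z\ge 1$ (positive transcendence degree) one checks $k-1\le\dim E-\dim Z$, and the strict-regularity/normal-crossing bound on $k$ guarantees the boundary case $k-1=\dim E-\dim Z$ forces $k-1>0$. Hence the contribution is zero in $\L(X,m)_{j+1}$.

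\emph{Main obstacle.} The genuinely delicate step is Step 3: verifying that the numerical inequalities of Lemma \ref{lemma:about_degenerate_cycles} are met in every case, in particular ruling out the degenerate possibility $k-1=\dim E-\dim Z=0$ (which would correspond to $n-k=\dim Z$ and $k=1$). This is precisely where the normal-crossing hypothesis on $U$ and the fact that $E$ is \emph{contracted} (so $\operatorname{codim}Z\ge 2$) must be used to bound $k$ and the transcendence degree simultaneously; without the normal-crossing assumption the residue $a'$ could fail to be a simple pullback and the dimension count could collapse. I expect the bulk of the work to be a careful case analysis of these dimensions at the generic point of $E$.
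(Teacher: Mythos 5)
Your proposal is correct and follows essentially the same route as the paper: reorder so that $\xi_1,\dots,\xi_k$ are exactly the functions with nonzero order along $E$, split off the unit factors by the Leibniz rule (Lemma \ref{lemma:leibniz_rule_tame_symbol}) so that the symbol becomes $\ts_E(\ph^*(\xi_1\wdw\xi_k))\wedge \bar\ph^*(\res{c}{Z})$, and invoke Lemma \ref{lemma:about_degenerate_cycles} after checking $k-1\le\dim E-\dim Z$ via the proper-intersection hypothesis ($\operatorname{codim}Z\ge k$) and contractedness of $E$. The only difference is that you make explicit the boundary case $k-1=\dim E-\dim Z$ with $k=1$, which the paper leaves implicit (it is excluded because $E$ contracted gives $\dim E>\dim Z$).
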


\begin{proof}
When $n=0$ there is nothing to prove. So we can assume that $n\geq 1$. Denote by $W$ the intersection $\ph(E)\cap U$.
    
    We can assume that there is $k$ such that $W\subset D_i^0$ for $1\leq i\leq k$ and $W\nsubseteq D_i^m$ for any $i>k$ and any $m\in \{0,\infty\}$. 
    
    Set $a=\xi_1\wdw \xi_n, b= \xi_1\wdw \xi_k, c= \xi_{k+1}\wdw \xi_n$. 
For any $i> k$ we have $\ord_E(\ph^*(\xi_i))=0$. So we get
    $$\alpha:=\ts_{E}(\ph^*(a), f\circ \ph)=[E,\ts_{E}(\ph^*(b))\wedge \res{\ph^*(c)}{E}, \res{(f\circ \ph)}{E}].$$

Let $g_1\colon E\to W$ and $g_2\colon W\to X$ be the natural maps. We have
$$\alpha=[E,\ts_{E}(\ph^*(b))\wedge g_1^*(\res{c}{W}), g_2\circ g_1].$$
We apply Lemma \ref{lemma:about_degenerate_cycles}. Let $\dim Y=p$. We need to check that $k-1 \leq \dim E-\dim W$.

Let $T$ be some irreducible component of $\bigcap\limits_{i=1}^k D_i^0$ containing $W$. By condition of the lemma $\dim T=p-k$. So $\dim (W\cap U)\leq \dim T=p-k$. As $W$ is irreducible this implies that $\dim W=\dim W\cap U\leq p-k$. So $$\dim E-\dim W\geq p-1-(p-k)=k-1.$$
\end{proof}

We have the following lemma(see \cite[Lemma 2.14]{bolbachan_2023_chow}, however the proof given here is not correct):

\begin{lemma}
\label{lemma:char_of_strictly_regular}
    Let $Y$ be a smooth algebraic variety and $a\in\Lambda^n(Y)$ is strictly regular. Then for any closed point $y\in Y$ there is an affine open neighborhood $U$ containing $y$, such that the restriction of $a$ to $U$ can be represented as linear combination of the element of the form
    $$\xi_1\wdw \xi_k\wedge u_{k+1}\wdw u_n.$$
    Here $u_i$ are invertible on $U$ and $\xi_i$ can be extended to a local system of parameters at $y$.
\end{lemma}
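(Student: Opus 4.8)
The plan is to begin from the representation guaranteed by strict regularity and, in each wedge factor, to separate the part that ``sees'' the divisors through $y$ from a part that is a unit near $y$. By Definition~\ref{def:strictly_regular} we may write $a=\sum_\beta n_\beta\,\alpha_1^\beta\wdw\alpha_n^\beta$ with $\alpha_j^\beta\in\k(Y)^\t$ so that the reduced divisor $\sum_{\beta,j}|(\alpha_j^\beta)|$ is a simple normal crossing divisor on some open neighbourhood $V$ of $y$. Let $D_1,\dots,D_r$ be its irreducible components on $V$, ordered so that $D_1,\dots,D_s$ are those passing through $y$ and $D_{s+1},\dots,D_r$ those that do not. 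Since the $D_i$ meet transversally, at most $\dim Y$ of them pass through $y$, so $s\le\dim Y$.

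First I would shrink $V$ to a smaller affine open $U\ni y$. Because $\mathcal O_{Y,y}$ is a regular local ring, each $D_l$ ($1\le l\le s$) is cut out at $y$ by a single element $t_l$, and by the normal crossing hypothesis $t_1,\dots,t_s$ extend to a regular system of parameters $t_1,\dots,t_{\dim Y}$ at $y$. Each $t_l$ is regular near $y$, and $\Div(t_l)$ has only $D_l$ passing through $y$. I then take $U$ to be an affine neighbourhood of $y$ obtained from $V$ by deleting the finitely many irreducible divisors not containing $y$ that occur either among $D_{s+1},\dots,D_r$ or in some $\Div(t_l)$. On $U$ we then have $\Div_U(t_l)=D_l\cap U$ for $1\le l\le s$, and the support of $\sum_{\beta,j}|(\alpha_j^\beta)|$ is contained in $D_1\cup\dots\cup D_s$.

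Next, for each factor I would record its divisor on $U$ as $\Div_U(\alpha_j^\beta)=\sum_{l=1}^s m_{j,l}^\beta\,[D_l\cap U]$ with $m_{j,l}^\beta\in\Z$ and set $u_j^\beta:=\alpha_j^\beta\cdot\prod_{l=1}^s t_l^{-m_{j,l}^\beta}$. By construction $\Div_U(u_j^\beta)=0$, and since $U$ is a normal affine variety a rational function with trivial divisor is invertible; hence $u_j^\beta\in\mathcal O(U)^\t$. Writing the group law of $\k(U)^\t$ additively, this reads $\alpha_j^\beta=u_j^\beta+\sum_{l=1}^s m_{j,l}^\beta\, t_l$. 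Substituting into $\alpha_1^\beta\wdw\alpha_n^\beta$ and expanding by multilinearity, each resulting summand is a wedge whose factors are either some $t_l$ (with $1\le l\le s$) or some unit $u_j^\beta$. Terms with a repeated $t_l$ vanish, so after reordering (at the cost of a sign) every surviving summand has the form $t_{l_1}\wdw t_{l_k}\wedge u_{j_{k+1}}\wdw u_{j_n}$ with distinct $l_1,\dots,l_k$. Since $t_{l_1},\dots,t_{l_k}$ are distinct members of $t_1,\dots,t_s$, they extend to a local system of parameters at $y$, and the remaining factors are units on $U$; this is exactly the claimed form.

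The main obstacle I anticipate is the simultaneous control of divisors after localization: I must ensure both that $\Div_U(t_l)=D_l\cap U$ (so the factorization genuinely produces units) and that no $\alpha_j^\beta$ acquires zeros or poles on $U$ outside $D_1,\dots,D_s$. Both are achieved by deleting finitely many divisors avoiding $y$, but it is precisely the failure to shrink enough — for instance, treating $D_l$ as globally principal on $U$ rather than only principal in $\mathcal O_{Y,y}$ — that breaks the naive argument. The invertibility of $u_j^\beta$ then rests on the normality of $U$, which holds since $Y$ is smooth.
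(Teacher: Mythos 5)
Your proposal is correct and follows essentially the same route as the paper: the paper's proof consists precisely of the factorization $\alpha_i=u_i\,\xi_1^{n_{i,1}}\cdots\xi_p^{n_{i,p}}$ into a unit near $y$ times powers of a regular system of parameters adapted to the SNC components through $y$, followed by multilinear expansion. You simply supply the details the paper leaves implicit (shrinking $U$ so the $t_l$ are genuinely principal with the right divisor, and invoking normality to conclude that a function with trivial divisor on $U$ is invertible).
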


\begin{proof}
    The statement follows from the following fact. Let $\alpha_1,\dots, \alpha_n$ be non-zero rational functions on $Y$ such that the divisor $\sum |\alpha_i|$ is supported on a simple crossing divisor at $y$. Then there is a regular system of parameters $\xi_1,\dots, \xi_p$ at $y$, rational functions $u_i, 1\leq i\leq n$ taking non-zero values at $y$ and integers $n_{i,j}, 1\leq i\leq n, 1\leq j\leq p$ such that for any $1\leq i\leq n$, we have: $\alpha_i=u_i\xi_1^{n_{i, 1}}\cdot\dots\cdot \xi_p^{n_{i,p}}$ .
\end{proof}

\begin{proof}[The proof of Theorem \ref{th:differential_strictly_regular_elements}]
Let $W$ be the image of $E$ under $\ph$ and $y\in W$. We apply Lemma \ref{lemma:char_of_strictly_regular} to the point $y$ and the element $a$. So we can assume that 
$$a=\xi_1\wdw \xi_k\wedge u_{k+1}\wdw u_n,$$
such that $u_i$ take non-zero values at $y$ and $\xi_i$ can be extended to local system of parameters at $y$.
Now the statement follows from Lemma \ref{lemma:differential_normal_crossing_zero}.
\end{proof}

\begin{corollary}
\label{cor:differential_exceptional_divisor_zero_Bloch}
    Let $Y$ be a variety, $f\colon Y\to X$ be a proper morphism and  $\xi_1,\dots, \xi_n\in \k(Y)$. Let $D_i^1$ be the closure of the set given by the equation $\xi_i=1$. Set $Z=\cup D_i^1$ and $U=Y\bs Z$. Denote by $D_i^0, D_i^\infty$ the divisors of zeros and poles of the functions $\xi_i$ on $U$. Assume that for any $1\leq n_1<n_2\dots <n_l\leq n$ and any $m_1,\dots, m_l\in \{0,\infty\}$, the codimension of each irreducible component of $D_{n_1}^{m_1}\cap\dots \cap D_{n_l}^{m_l}$ is equal to $l$. Let $\wt Y$ be a smooth variety and $\ph\colon \wt Y\to Y$ be an alteration. Then for any divisor $E\subset \wt Y$ contracted under $\ph$ we have $$\ts_E(\ph^*(\xi_1\wdw \xi_n), f\circ \ph)=0\in \L(X, m)_{j+1}.$$

\end{corollary}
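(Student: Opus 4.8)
The plan is to split the argument according to where the center $\ph(E)$ of the contracted divisor lands. Since $\ph$ is proper and $E$ is irreducible, $\ph(E)$ is an irreducible closed subset of $Y$, of dimension at most $\dim\tilde Y-2$ (because $E$ is contracted). If $\ph(E)\cap U\neq\varnothing$, then I am exactly in the situation of Lemma~\ref{lemma:differential_normal_crossing_zero}, whose hypothesis is precisely the codimension condition assumed here, so $\ts_E(\ph^*(\xi_1\wdw\xi_n),f\circ\ph)=0$ immediately. The entire content is therefore the complementary case $\ph(E)\cap U=\varnothing$, i.e. $\ph(E)\subseteq Z$. Here irreducibility of $\ph(E)$ together with $Z=\bigcup_i D_i^1$ forces $\ph(E)\subseteq D_i^1$ for a single index $i$; morally $\xi_i$ tends to $1$ along the center, and the $i$-th slot should drop out.

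The first thing I would check is the favorable subcase in which $\ph^*(\xi_i)$ is a unit along $E$ with residue $1$: then $\ord_E(\ph^*\xi_i)=0$ and the residue $\overline{\ph^*\xi_i}$ equals $1$, the zero element of the additively written group $\k(E)^\t$, so by the Leibniz rule (Lemma~\ref{lemma:leibniz_rule_tame_symbol}) this factor propagates and $\ts_E(\ph^*(\xi_1\wdw\xi_n))=0$. The point I would stress, however, is that this naive argument is not enough in general: when $\xi_i-1$ is ``ratio-like'' near the center (the model example being $\xi_i=1+x/y$ blown up at the origin, where the exceptional residue of $\ph^*\xi_i$ is the nonconstant function $(v+1)/v$), the residue is genuinely not equal to $1$. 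In that situation one cannot kill $\ts_E$ by inspection; instead I would show that the class $[E,\ts_E(\ph^*(\xi_1\wdw\xi_n)),f\circ\ph|_E]$ is a \emph{degenerate} cycle and invoke Lemmas~\ref{lemma:degenerate_cycles} and~\ref{lemma:about_degenerate_cycles}.

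To set this up I would first reduce to the case in which $\ph^*(\xi_1\wdw\xi_n)$ is strictly regular, by composing $\ph$ with a further alteration $\psi\colon Y'\to\tilde Y$ with $Y'$ smooth and transporting the computation to a divisor $E'$ over $E$ via the functoriality of the tame symbol (Lemma~\ref{lemma:functoriality_of_residue_2}); this only changes the answer by a nonzero scalar, keeps $E'$ contracted, and leaves the image $\ph(E)\subseteq D_i^1$ unchanged. On the strictly regular model, Lemma~\ref{lemma:char_of_strictly_regular} writes $\ph^*a$ locally in coordinate form, so by the Leibniz rule $\ts_E(\ph^*a)$ becomes a wedge of residues of functions on $E$; the residues that descend to the image $\overline{f(\ph(E))}$ are pulled back, and the remaining ``active'' factors are what must be controlled. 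The hard part will be exactly this weight bound, namely showing that the number of active factors is at most $\dim E-\dim\overline{f(\ph(E))}$, so that the degenerate-cycle vanishing applies (in the tight cases through the $w\mapsto w^{-1}$ self-map trick used in the proof of Lemma~\ref{lemma:degenerate_cycles}). The subtlety is that the codimension condition is imposed only on $U$, whereas the center now lies in $Z$; the extra dimension drop needed to close the estimate must be extracted from the value-$1$ condition itself — the strict inequality $\dim\ph(E)<\dim D_i^1$ together with $\xi_i\to1$ — rather than from the stated proper-intersection hypothesis. This is the step I expect to require the most care, mirroring but genuinely extending the count $k-1\le\dim E-\dim W$ in the proof of Lemma~\ref{lemma:differential_normal_crossing_zero}.
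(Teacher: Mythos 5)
Your case split is the same as the paper's, and your treatment of the case $\ph(E)\cap U\neq\varnothing$ coincides with it: both reduce immediately to Lemma~\ref{lemma:differential_normal_crossing_zero}. The divergence is in the case $\ph(E)\subseteq Z$. The paper's entire argument there is what you call the favorable subcase: it asserts that $\ph(E)\subseteq D_i^1$ forces $\res{\ph^*(\xi_i)}{E}=1$ and declares the statement obvious. You correctly observe that this assertion is valid only when $\xi_i$ is defined at the generic point of $\ph(E)$ (in which case $D_i^1\cap\mathrm{dom}(\xi_i)=\{\xi_i=1\}$ does force the residue to be $1$), and that it genuinely fails when that generic point lies in the indeterminacy locus of $\xi_i$; your example $\xi_i=1+x/y$ blown up at the origin, with exceptional residue $(v+1)/v$, is correct. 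So on this point you are more careful than the published proof, and the discrepancy is worth flagging to the author.

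As a proof, however, your proposal is incomplete precisely where it departs from the paper. For the unfavorable subcase you only sketch a strategy --- pass to a strictly regular model, write $\ts_E(\ph^*(\xi_1\wdw\xi_n))$ as a wedge of ``active'' factors and pullbacks, and invoke Lemmas~\ref{lemma:degenerate_cycles} and~\ref{lemma:about_degenerate_cycles} --- and you explicitly defer the decisive step, namely the bound (number of active factors) $\le\dim E-\dim \ph(E)$ needed for the degeneracy lemma. That bound is not routine here: since $\ph(E)\subseteq Z$, the proper-intersection hypothesis is imposed only on $U$ and gives no control near $\ph(E)$; components of $\mathrm{div}(\xi_i)$ contained in $Z$ are invisible to the hypothesis, and a $\xi_i$ indeterminate along $\ph(E)$ contributes an active factor without contributing to any codimension count. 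Until you either prove this estimate or substitute an argument adapted to the actual use of the corollary (Theorem~\ref{th:W(m)_is_morphism}, where the $\xi_i$ are the coordinate functions $\wt t_i$ and the geometry of the faces can be exploited), the only genuinely hard case remains open. In short: same skeleton as the paper, a sharper diagnosis of where the difficulty sits, but no complete proof of the case that needs one.
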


\begin{proof}
We can assume that $n\geq 1$. Denote by $W$ the image of $E$ under $\ph$. If $W$ is not contained in $Z$ then we can apply Lemma \ref{lemma:differential_normal_crossing_zero}. Assume that $W\subset Z_i$ for some $i$. Then the rational function $\res{\ph^*(\xi_i)}{E}$ is equal to $1$ and the statement is obvious.
\end{proof}

\subsection{The differential is well-defined}
\label{sub:sec:Lambda+correctness}

The following lemma is a direct corollary of Lemma \ref{lemma:functoriality_of_residue_2} and the definition of $\L(X,m)$.

\begin{lemma}
\label{lemma:functoriality_of_residue}
    Let $Y_1$ and $Y_2$ be smooth varieties. Let $f\colon Y_2\to X$ be a proper morphism and $\ph\colon Y_1\to Y_2$ be an alteration. Let $E_1\subset Y_1, E_2\subset Y_2$ be irreducible divisors such that $\varphi(E_1)=E_2$. Then we have
    $$\ts_{E_1}(\ph^*(a),f\circ \ph)=e(E_1, E_2)f(E_1, E_2)\ts_{E_2}(a, f).$$
\end{lemma}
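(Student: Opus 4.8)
The plan is to unwind the definitions of the two residue terms and reduce everything to the two ingredients the paper points to: Lemma \ref{lemma:functoriality_of_residue_2}, together with the identity $[Y_1,\ph^*(a),f\circ\ph]=(\deg\ph)[Y_2,a,f]$ that is built into the colimit definition of $\L(X,m)$.

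First I would expand the left-hand side using the definition $\ts_D(a,f)=[D,\ts_D(a),\res{f}{D}]$, writing
$$\ts_{E_1}(\ph^*(a),f\circ\ph)=[E_1,\ts_{E_1}(\ph^*(a)),\res{(f\circ\ph)}{E_1}].$$
Since $E_2$ is an irreducible divisor and $\ph(E_1)=E_2$, we have $\ol{\ph(E_1)}=E_2$, so Lemma \ref{lemma:functoriality_of_residue_2} applies with $D_1=E_1$, $D_2=E_2$ and yields $\ts_{E_1}(\ph^*(a))=e(E_1,E_2)(\ol\ph)^*(\ts_{E_2}(a))$, where $\ol\ph\colon E_1\to E_2$ is the induced map. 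Pulling the scalar $e(E_1,E_2)$ out via the linearity relation $[Y,\lambda b,g]=\lambda[Y,b,g]$ reduces the computation to the element $[E_1,(\ol\ph)^*(\ts_{E_2}(a)),\res{(f\circ\ph)}{E_1}]$.

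The second step is to recognize this as an instance of the defining relation of $\L(X,m)$ for the morphism $\ol\ph$. I would first check $\res{(f\circ\ph)}{E_1}=\res{f}{E_2}\circ\ol\ph$, which is immediate from $\ph(E_1)\subseteq E_2$. Then the relation $[E_1,(\ol\ph)^*(c),g\circ\ol\ph]=(\deg\ol\ph)[E_2,c,g]$, applied with $c=\ts_{E_2}(a)$ and $g=\res{f}{E_2}$, turns the expression into $(\deg\ol\ph)[E_2,\ts_{E_2}(a),\res{f}{E_2}]=(\deg\ol\ph)\ts_{E_2}(a,f)$. Combining this with the factor $e(E_1,E_2)$ from the previous step and using $\deg\ol\ph=f(E_1,E_2)$ produces exactly the claimed identity.

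The one point requiring a small justification, and which I expect to be the only genuine obstacle, is that $\ol\ph\colon E_1\to E_2$ is a proper dominant morphism of finite degree equal to $f(E_1,E_2)$, so that the colimit relation legitimately applies (note that the source of this morphism need not be smooth, since the relation is stated for arbitrary proper dominant morphisms of finite degree). Properness is inherited from $\ph$ by restriction to the closed subvariety $E_1$, and dominance holds because $\ph(E_1)=E_2$. For generic finiteness I would use that $\ph$ is generically finite, hence $\dim Y_1=\dim Y_2$ and therefore $\dim E_1=\dim E_2$; a dominant morphism between integral varieties of equal dimension is generically finite, and its degree is $[\k(E_1):\k(E_2)]=f(E_1,E_2)$ by definition. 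All other manipulations are purely formal, which is why the statement is indeed a direct corollary of Lemma \ref{lemma:functoriality_of_residue_2} and the definition of $\L(X,m)$.
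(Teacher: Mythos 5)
Your argument is correct and is exactly the route the paper takes: the paper dispenses with the proof by declaring the lemma a direct corollary of Lemma \ref{lemma:functoriality_of_residue_2} and of the defining relation $[Y_2,a,f]=\frac{1}{\deg\ph}[Y_1,\ph^*(a),f\circ\ph]$, and your write-up simply fills in those two steps, including the genuinely necessary check that $\ol\ph\colon E_1\to E_2$ is a proper dominant generically finite morphism of degree $f(E_1,E_2)$ to which the colimit relation applies even though $E_1$ need not be smooth. Nothing further is needed.
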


\begin{proposition}
\label{prop:d_is_well_defined}
    The map $d\colon \L(X,m)_j\to \L(X, m)_{j+1}$ is well-defined. 
\end{proposition}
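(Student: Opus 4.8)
The plan is to show that the proposed formula for $d([Y,a,f])$ is independent of all choices, namely: (i) independent of the alteration $\ph\colon \wt Y\to Y$ chosen to make $\ph^*(a)$ strictly regular, and (ii) compatible with the defining relations of $\L(X,m)_j$ (additivity in $a$, $\Q$-linearity, and the alteration relation $[\wt Y,\ph^*(a),f\circ\ph]=(\deg\ph)[Y,a,f]$). The key structural input is Lemma \ref{lemma:functoriality_of_residue}: for an alteration $\ph\colon Y_1\to Y_2$ of smooth varieties and divisors $E_1\subset Y_1$, $E_2\subset Y_2$ with $\ph(E_1)=E_2$, one has $\ts_{E_1}(\ph^*(a),f\circ\ph)=e(E_1,E_2)f(E_1,E_2)\,\ts_{E_2}(a,f)$. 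Combined with the vanishing result Theorem \ref{th:differential_strictly_regular_elements} (contracted divisors contribute zero when $a$ is strictly regular), this is exactly the toolkit needed.

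First I would reduce the independence-of-$\ph$ question to comparing two alterations via a common dominating one. Given two alterations $\ph_1\colon \wt Y_1\to Y$ and $\ph_2\colon \wt Y_2\to Y$, both making the pullback of $a$ strictly regular, I would (using resolution/de Jong) choose a smooth $\wt Y_0$ alterating both, say $\psi_i\colon \wt Y_0\to \wt Y_i$ with $\ph_1\circ\psi_1=\ph_2\circ\psi_2=:\ph_0$. It then suffices to show that the formula computed on $\wt Y_0$ agrees with the formula computed on each $\wt Y_i$. So the heart of the matter is: \emph{for a single alteration $\psi\colon \wt Y_0\to \wt Y_1$ of smooth varieties with $\ph_1^*(a)$ strictly regular, the defining sum is unchanged.} I would split the divisors $D_0\subset \wt Y_0$ into those dominating some divisor of $\wt Y_1$ and those contracted by $\psi$. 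The contracted ones vanish in $\L(X,m)_{j+1}$ by Theorem \ref{th:differential_strictly_regular_elements} applied with $a$ strictly regular on $\wt Y_1$. For the non-contracted ones, I would group the divisors $D_0$ lying over a fixed $D_1\subset \wt Y_1$ and apply Lemma \ref{lemma:functoriality_of_residue}, so that $\sum_{D_0\mapsto D_1}\ts_{D_0}(\psi^*\ph_1^*(a),\cdots)=\bigl(\sum_{D_0}e(D_0,D_1)f(D_0,D_1)\bigr)\ts_{D_1}(\ph_1^*(a),\cdots)=(\deg\psi)\,\ts_{D_1}(\ph_1^*(a),\cdots)$ by Lemma \ref{lemma:degree_is_a_sum_multiplicties}. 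Dividing by $\deg\ph_0=(\deg\psi)(\deg\ph_1)$ recovers the expression on $\wt Y_1$, proving independence.

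Next I would verify compatibility with the relations. Additivity and $\Q$-linearity in $a$ are straightforward: one may choose a single alteration $\ph$ making $\ph^*$ of all relevant elements simultaneously strictly regular (strict regularity is preserved under further pullback, so a common refinement exists), and then both $\ph^*$ and each $\ts_D$ are additive and $\Q$-linear, so the formula respects these relations. For the alteration relation, given $\ph\colon \wt Y\to Y$ and an element $[Y,a,f]$, I would pick an alteration $\psi\colon Y'\to \wt Y$ with $(\ph\circ\psi)^*(a)$ strictly regular; then $d([\wt Y,\ph^*(a),f\circ\ph])$ and $(\deg\ph)\,d([Y,a,f])$ are both computed via the single alteration $\ph\circ\psi$ (resp. $\psi$), and the factor $\deg\ph$ matches because of the $1/\deg$ normalization in the definition of $d$.

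\textbf{The main obstacle} I expect is the bookkeeping in the independence-of-$\ph$ step: making sure the grouping of divisors over a fixed $D_1$ interacts correctly with the $\frac{1}{\deg\ph}$ normalizations, and carefully handling the case where $D_1\subset\wt Y_1$ has \emph{no} preimage divisor dominating it (which cannot happen for an alteration, since $\psi$ is surjective and proper, so every divisor is dominated — but this needs to be invoked) together with the genuinely contracted divisors of $\wt Y_0$ whose vanishing relies essentially on strict regularity. In other words, the entire argument hinges on Theorem \ref{th:differential_strictly_regular_elements}: without the vanishing of contracted-divisor contributions, the sum would depend on $\ph$. I would therefore present the reduction to a single dominating alteration first, then cleanly separate the two types of divisors, citing Lemma \ref{lemma:functoriality_of_residue}, Lemma \ref{lemma:degree_is_a_sum_multiplicties}, and Theorem \ref{th:differential_strictly_regular_elements} at the appropriate points.
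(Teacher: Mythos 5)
Your proposal is correct and follows essentially the same route as the paper: reduce to a single alteration $\psi$ between smooth models via a common dominating alteration, kill the contracted divisors with Theorem \ref{th:differential_strictly_regular_elements}, and group the remaining divisors over their images using Lemma \ref{lemma:functoriality_of_residue} together with Lemma \ref{lemma:degree_is_a_sum_multiplicties} to produce the factor $\deg\psi$. The only difference is that you also spell out compatibility with the defining relations of $\L(X,m)_j$, which the paper leaves implicit after asserting that independence of $\ph$ is the only thing to check.
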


\begin{proof}
Let $f\colon Y\to X$ be a proper morphism and $a\in\L^n(Y)$. We recall that the element $d([Y, a, f])$ is defined by the formula
$$\dfrac 1{\deg\ph}\sum\limits_{D\subset \wt Y}\ts_D(\ph^*(a), f\circ\ph),$$
where $\ph\colon \wt Y\to Y$ is any alteration such that the element $\ph^*(a)$ is strictly regular. We only need to prove that this element does not depend on $\ph$. 

Let $\ph_1\colon \wt Y_1\to Y$ and $\wt Y_2\to Y$ be two such alterations. There are alterations $\ph_{31}\colon \wt Y_3\to Y_1, \ph_{32}\colon\wt Y_{3}\to Y_2$ such that $\ph_1\circ\ph_{31}=\ph_{2}\circ\ph_{32}$ and the element $(\ph_{1}\circ\ph_{31})^*(a)=(\ph_{2}\circ\ph_{32})^*(a)$ is strictly regular. This shows that we only need to check the following statement. Let $\ph\colon \wt Y\to Y$ be an alteration and $a\in \L^n(Y)$. Assume that the elements $a$ and $\ph^*(a)$ are strictly regular. Then the following formula holds:
$$\deg(\ph)\sum\limits_{D\subset Y}\ts_D(a, f)=\sum\limits_{D\subset \wt Y}\ts_D(\ph^*(a), f\circ\ph).$$
We have:
\begin{align*}
          &\sum\limits_{D\subset \wt Y}\ts_D(\ph^*(a)), f\circ \ph)=\\
          &=\sum\limits_{D\in Div(\ph)_0} \ts_D(\ph^*(a)), f\circ \ph)+\sum\limits_{D\subset Y}\sum\limits_{D'\in Div(\ph, D)}\ts_{D'}(\ph^*(a), f\circ \ph).
\end{align*}

The first term is equal to zero by Theorem  \ref{th:differential_strictly_regular_elements}. By Lemma \ref{lemma:functoriality_of_residue} and Lemma \ref{lemma:degree_is_a_sum_multiplicties}   the second term is equal to
        \begin{align*}
            &\sum\limits_{D\subset Y}\sum\limits_{D'\in Div(\ph, D)}e(D', D)f(D',D)\ts_{D}(a, f)=\\
            &=\deg(\ph)\sum\limits_{D\subset Y}\ts_{D}(a, f).
        \end{align*}
        The proposition is proved.
\end{proof}
\subsection{The square of the differential is zero}
\label{sub:sec:Lambda_square_of_differential}
To finish the proof of Theorem \ref{th:Lambda(m)_well_defined} we need to show that $d^2=0$. This is a direct corollary of the following theorem:

\begin{theorem}
\label{th:Parshin_reciprocity_law}
    Let $Y$ be a smooth variety and $f\colon Y\to X$ be a proper morphism. Let $a\in\Lambda^n(Y)$ be strictly regular. Then
    $$\sum\limits_{D'\subset D\subset X}[D',\ts_{D'}\ts_{D}(a),f \circ j_D\circ j_{D'}]=0\in \Lambda(X, m)_{j+2}.$$
    
    Here the sum is taken over all chains $D'\subset D\subset Y$ of irreducible subvarieties of codimensions $2$ and $1$. $j_{D'}$ (resp. $j_D$) is the natural embedding of $D'$ to $D$ (resp. $D$ to $Y$). (It is clear that in this sum only finitely many non-zero terms).
\end{theorem}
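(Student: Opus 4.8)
The plan is to prove the vanishing by decomposing the sum according to the codimension-two subvariety $D'$ and then reducing to a purely local reciprocity statement at the generic point of $D'$. First I would observe that for a fixed $D'$ the composite $f\circ j_D\circ j_{D'}$ is simply the restriction $\res{f}{D'}$, which does not depend on the intermediate divisor $D$. Hence, grouping the terms,
$$\sum_{D'\subset D\subset Y}[D',\ts_{D'}\ts_{D}(a),f\circ j_D\circ j_{D'}]=\sum_{D'}\Bigl[D',\sum_{D\colon D'\subset D}\ts_{D'}\ts_{D}(a),\res{f}{D'}\Bigr],$$
so it suffices to show that for every irreducible $D'$ of codimension two the inner sum $\sum_{D'\subset D}\ts_{D'}\ts_{D}(a)$ vanishes in $\Lambda^{n-2}(\k(D')^\t)$. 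Since each iterated tame symbol depends only on the germ of $a$ at the generic point $\eta$ of $D'$, this is a local statement in the two-dimensional regular local ring $\mc O_{Y,\eta}$.

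Next I would exploit strict regularity. Choosing a closed point $y\in D'$ that is generic in $D'$ and applying Lemma \ref{lemma:char_of_strictly_regular}, I may assume, after shrinking to a neighbourhood and using linearity, that $a$ is a single term $\xi_1\wdw\xi_k\wedge u_{k+1}\wdw u_n$, where $\xi_1,\dots,\xi_k$ extend to a regular system of parameters at $y$ and the $u_i$ are units at $y$. Such a term contributes a nonzero double residue along $D'$ only if $D'$ is a component of the intersection of two of the coordinate divisors $\{\xi_i=0\}$: indeed, $\ts_{D}(a)\ne0$ forces $D=\{\xi_i=0\}$ for some $i\le k$, and then $\ts_{D'}\ts_{D}(a)\ne0$ forces $D'$ to lie in the zero locus of a second coordinate, the units $u_j$ restricting to units along $D'$ by the genericity of $y$. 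Thus, for a fixed contributing $D'=\{\xi_1=\xi_2=0\}$ (after relabelling) only the two divisors $D_1=\{\xi_1=0\}$ and $D_2=\{\xi_2=0\}$ occur in the inner sum.

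The heart is then the explicit cancellation, carried out with the Leibniz rule (Lemma \ref{lemma:leibniz_rule_tame_symbol}). Writing $w=u_3\wdw u_n$ and using that $\xi_1,\xi_2$ are the two transverse uniformisers, one computes $\ts_{D'}\ts_{D_1}(\xi_1\wedge\xi_2\wedge w)=+\,\res{\ol w}{D'}$, whereas the antisymmetry $\xi_1\wedge\xi_2=-\xi_2\wedge\xi_1$ produces $\ts_{D'}\ts_{D_2}(\xi_1\wedge\xi_2\wedge w)=-\,\res{\ol w}{D'}$. Hence the two contributions cancel, giving $\sum_{D'\subset D}\ts_{D'}\ts_{D}(a)=0$ as required; summing over all $D'$ yields the theorem. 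Finiteness of the number of nonzero terms is automatic, since $a$ has only finitely many components in its support.

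The main obstacle is the rigorous justification of this local reciprocity in its full form rather than in the idealised monomial picture sketched above. Two points require care. First, the iterated residue must be shown to be computable along either flag $D'\subset D_1$ or $D'\subset D_2$ and to agree up to the expected sign; this is precisely the two-dimensional Parshin reciprocity law, and matching the residue fields and multiplicities reached through the two branches (cf.\ Lemma \ref{lemma:functoriality_of_residue_2} and Lemma \ref{lemma:degree_is_a_sum_multiplicties}) is the delicate step, especially when the unit part $w$ itself has residues along $D'$. Second, one must check that the reduction to monomial form via Lemma \ref{lemma:char_of_strictly_regular} loses no terms and is compatible with the additivity relations defining $\Lambda(X,m)$. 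I expect to import the needed reciprocity statement from \cite{parshin1975class} and \cite{bolbachan_2023_chow}, in the extended form developed in Section 6, so that the argument reduces cleanly to the sign bookkeeping indicated above.
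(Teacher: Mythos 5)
Your proposal is correct and follows essentially the same route as the paper: fix the codimension-two subvariety $D'$, use Lemma \ref{lemma:char_of_strictly_regular} to put $a$ in monomial normal-crossing form near a point of $D'$, observe that only the two coordinate divisors through $D'$ contribute, and cancel the two terms by the Leibniz rule and antisymmetry. The paper does not need to import any external reciprocity law for this step -- the direct local computation you sketch is exactly the whole proof.
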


The point of this theorem is that for given $D'$ there are precisely two $D$ such that $D'\subset D\subset X$ and $\ts_{D}\ts_{D'}(a)\ne 0$. Moreover these two elements have opposite signs. The particular case of this theorem when $\dim Y=2$ was proved in \cite[Theorem 2.15]{bolbachan_2023_chow}. The case $\dim Y=2, n=3$ follows from classical Parshin reciprocity law \cite{parshin1975class}.

\begin{proof}[The proof of Theorem \ref{th:Parshin_reciprocity_law}]
It is enough to show that for any subvariety $D'$  of codimension $2$ we have:
$$\sum\limits_{D'\subset D\subset X}[D',\ts_{D'}\ts_{D}(a),f \circ j_D\circ j_{D'}]=0\in \Lambda(X, m)_{j+2}.$$
(The sum is taken over all irreducible divisors $D$ containing $D'$).

    Choose some point $y\in D'$. We can apply Lemma \ref{lemma:char_of_strictly_regular} and assume that $a=\xi_1\wdw \xi_k\wedge u_{k+1}\wdw u_n$ at some open affine neighborhood $U$ of the point $y$. Moreover, we can assume that the divisors $D_i=(\xi_i)\cap U$ are irreducible. The only case when $\ts_{D'}\ts_{D}(a)\ne 0$ is when $D'\cap U=D_{i_1}\cap D_{i_2}$ and $D\cap U=D_{i_1}$ for some $i_1\ne i_2$. Without loss of generality we can assume that $i_1=1, i_2=2$. 
    
    It remains to show that $\ts_{D_{1}\cap D_{2}}\ts_{D_{1}}(a)=-\ts_{D_{1}\cap D_{2}}\ts_{D_{2}}(a)$. Denote $H=D_1\cap D_2$. We have:
    \begin{align*}
        &\ts_{H}\ts_{D_1}(a)=\\
        &\ts_{H}(\res{\xi_2}{D_1}\wdw \res{\xi_k}{D_1}\wedge \res{u_{k+1}}{D_1}\wdw \res{u_{n}}{D_1})=\\
        &\res{\xi_3}{H}\wdw \res{\xi_k}{H}\wedge \res{u_{k+1}}{H}\wdw \res{u_{n}}{H}.
    \end{align*}
    The last formula holds because $\ord_{H}(\res{\xi_2}{D_1})=1$. In the same way we get 
\begin{align*}
        &\ts_{H}\ts_{D_2}(a)=\\
        &-(\res{\xi_3}{H}\wdw \res{\xi_k}{H}\wedge \res{u_{k+1}}{H}\wdw \res{u_{n}}{H}).
    \end{align*}
 \end{proof}
The statement follows.

\begin{proof}[The proof of Theorem \ref{th:Lambda(m)_well_defined}]
    By Proposition \ref{prop:d_is_well_defined} we already know that $d$ is well-defined. Let us prove that $d^2=0$. We need to show that $d^2([Y,a,f])=0$. We can assume that $a$ is strictly regular. To apply Theorem \ref{th:Parshin_reciprocity_law} we only need to prove that for any $D\subset Y$ we have
    $$d(\ts_D(a, f))=\sum\limits_{D'\subset D\subset Y}[D',\ts_{D'}\ts_{D}(a),f \circ j_D\circ j_{D'}].$$
    To prove this formula it is enough to check that  $\ts_{D}(a)$ is strictly regular. This follows from the following fact: if $D_1+\dots+D_n$ is a simple normal crossing divisor, then $D_1\cap D_l+\dots+D_{l-1}\cap D_l$ is a simple normal crossing divisor on $D_l$.
\end{proof}

\section{Properties of the complex $\Lambda(X, m)$}
\label{sec:Lambda_properties}

\subsection{Proper pushforward}
\label{sub:sec:proper_push_forward}
Let $X_1, X_2$ be algebraic schemes and $\psi\colon X_1\to X_2$ be a proper morphism. Denote $k=\dim X_2-\dim X_1$. Define a map

$$\psi_*\colon \Lambda(X_1, m-k)[-2k]\to \Lambda(X_2, m)$$

 by the formula
$$\psi_*([Y, a, f])=[Y, a, \psi\circ f].$$

The fact this map is a well-defined morphism of complexes is clear.

\subsection{Flat pullback}
\label{sub:sec:flat_pullback}
Let $\psi\colon X_1\to X_2$ be a flat morphism of relative dimension $k$. Define $$\psi^*\colon \Lambda(X_2, m)\to \Lambda(X_1, m)$$ 
as follows.
Let $[Y_2, a, f]\in \L(X_2, m)$.

Consider the pullback diagram

\begin{equation*}
    \begin{tikzcd}
        Y_1 \ar[r, "\psi_f"]\ar[d, "f_\psi"] & Y_2\ar[d, "f"]\\
        X_1\ar[r, "\psi"] & X_2
    \end{tikzcd}
\end{equation*}

In this diagram $\psi_f, f_\psi$ are the natural projections. Let $p=\dim Y_2$. Let $$[Y_1]_{p+k}=\sum\limits_{\alpha\in A}n_\alpha [Y_1^\alpha]_{p+k}.$$ (See \cite{fulton2013intersection}). In this formula $[Z]_l$ is $l$-dimensional cycle associated to a closed subscheme of dimension $\leq l$. The sum is taken over all irreducible component of $X_1\t_{X_2} Y_2$ and $n_\alpha$ is multiplicity of $Y_1$ along $Y_1^\alpha$. Denote by $\psi_\alpha$ (resp. $f_\alpha$) the restriction of the map $\psi_f$ (resp. $f_\psi$) to $Y_1^\alpha$. 

 As $f_\psi$ is proper, each of the maps $f_\alpha$ is proper. As the map $\psi_f$ is flat, each of the maps $\psi_\alpha$ is dominant. Define:

$$\psi^*([Y_2, a, f])=\sum\limits_{\alpha\in A} n_\alpha[Y_1^\alpha, \psi_\alpha^*(a), f_{\alpha}]\in \L(X_1, m).$$

\begin{proposition}
\label{prop:pull_back_well_defined}
    The map $\psi^*$ is well-defined.
\end{proposition}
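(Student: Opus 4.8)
The space $\L(X_2,m)$ is the quotient of the free space on generators $[Y_2,a,f]$ by three relations, so the task is to check that the formula for $\psi^*$ descends to the quotient, i.e.\ respects each relation. Additivity $[Y_2,a+b,f]=[Y_2,a,f]+[Y_2,b,f]$ and $\Q$-homogeneity are immediate, since each $\psi_\alpha^*$ is linear and the defining formula for $\psi^*$ is a finite sum with fixed multiplicities $n_\alpha$. The only substantial point is the alteration relation: for an alteration $\ph\colon\wt Y_2\to Y_2$ and $a\in\L^n(Y_2)$, I must check that $\psi^*$ sends $[\wt Y_2,\ph^*(a),f\circ\ph]$ and $(\deg\ph)[Y_2,a,f]$ to the same element of $\L(X_1,m)$.

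First I would assemble the base-change data. Writing $\wt Y_1=X_1\t_{X_2}\wt Y_2=Y_1\t_{Y_2}\wt Y_2$, the projection $\wt\ph\colon\wt Y_1\to Y_1$ is the base change of $\ph$, hence proper, while the other projection $\wt\psi_f\colon\wt Y_1\to\wt Y_2$ is flat. Let $\wt Y_1^\beta$ (with multiplicities $\wt n_\beta$) be the irreducible components, each carrying the restricted maps $\wt\psi_\beta\colon\wt Y_1^\beta\to\wt Y_2$ (dominant) and $\wt\ph_\beta\colon\wt Y_1^\beta\to Y_1$. Commutativity of the base-change square, $\ph\circ\wt\psi_f=\psi_f\circ\wt\ph$, yields the key identity of pullbacks on functions, $(\wt\psi_\beta)^*(\ph^*(a))=(\wt\ph_\beta)^*(\psi_{\alpha(\beta)}^*(a))$, valid whenever $\wt\ph_\beta$ dominates a component $Y_1^{\alpha(\beta)}$; the map of each $\wt Y_1^\beta$ to $X_1$ is then $f_{\alpha(\beta)}\circ\wt\ph_\beta$.

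Next I would split the components $\wt Y_1^\beta$ into those dominating some $Y_1^\alpha$ and those contracted by $\wt\ph$. For a contracted component the image $R=\overline{\wt\ph_\beta(\wt Y_1^\beta)}$ has dimension strictly smaller than $\dim\wt Y_1^\beta$; since $\ph\circ\wt\psi_\beta=\psi_f\circ\wt\ph_\beta$ factors through $R$, the function part $(\wt\psi_\beta)^*(\ph^*(a))$ is pulled back from $R$, and the corresponding generator vanishes by Lemma \ref{lemma:degenerate_cycles}. For a dominating component $\wt\ph_\beta$ is generically finite of some degree $d_\beta$, and the alteration relation combined with the function-pullback identity above rewrites its contribution as $\wt n_\beta d_\beta\,[Y_1^{\alpha(\beta)},\psi_{\alpha(\beta)}^*(a),f_{\alpha(\beta)}]$.

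Collecting terms gives $\psi^*[\wt Y_2,\ph^*(a),f\circ\ph]=\sum_\alpha\big(\sum_{\beta:\alpha(\beta)=\alpha}\wt n_\beta d_\beta\big)[Y_1^\alpha,\psi_\alpha^*(a),f_\alpha]$, whereas $(\deg\ph)\psi^*[Y_2,a,f]=(\deg\ph)\sum_\alpha n_\alpha[Y_1^\alpha,\psi_\alpha^*(a),f_\alpha]$. The two agree once $\sum_{\beta:\alpha(\beta)=\alpha}\wt n_\beta d_\beta=(\deg\ph)n_\alpha$ for every $\alpha$, and this is precisely the cycle-level compatibility of flat pullback with proper pushforward applied to $[\wt Y_2]$ in the base-change square (see \cite{fulton2013intersection}), using $\ph_*[\wt Y_2]=(\deg\ph)[Y_2]$. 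The step I expect to be the main obstacle is the careful bookkeeping of multiplicities together with the contracted components: the former is handled cleanly by Fulton's compatibility, and the latter by the degenerate-cycle vanishing of Lemma \ref{lemma:degenerate_cycles}.
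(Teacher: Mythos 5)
Your proof is correct and follows the paper's argument in all essentials: form the base change $\wt Y_1=Y_1\t_{Y_2}\wt Y_2$, decompose into irreducible components with multiplicities, and derive the key identity $\sum_{\beta:\alpha(\beta)=\alpha}\wt n_\beta d_\beta=(\deg\ph)\,n_\alpha$ from the compatibility of flat pullback with proper pushforward applied to $[\wt Y_2]$. The one place you diverge is the treatment of components of $\wt Y_1$ contracted by $\wt\ph$: the paper proves (Lemma \ref{lemma:map_on_each_component_dominant}, via properness plus generic flatness) that no such components exist, so every $\beta$ dominates some $Y_1^\alpha$, whereas you allow them and kill their contribution by observing that the function datum is pulled back from the lower-dimensional image and invoking Lemma \ref{lemma:degenerate_cycles}; since contracted components also push forward to zero on the cycle level, your bookkeeping still closes. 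Both routes work; yours trades the general-flatness argument for the degenerate-cycle vanishing, which is a perfectly acceptable (arguably slightly more robust) substitute.
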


We need the following lemma.

\begin{lemma}
\label{lemma:map_on_each_component_dominant}
    Let $Y_2$ and $\wt Y_2$ be varieties and $Y_1$ be an algebraic scheme. Let $g\colon \wt Y_2\to Y_2$ be a dominant proper morphism and $h\colon Y_1\to Y_2$ be a flat morphism. Denote $Y_1\t_{Y_2}\wt Y_2$ by $\wt Y_1$. The image of every irreducible component of $\wt Y_1$ under the map $g_h$ is an irreducible component of $Y_1$.
    \begin{equation*}
    \begin{tikzcd}
        \wt Y_1\arrow[d, "g_h"]\arrow[r, "h_g"]&\wt Y_2\arrow[d, "g"]\\
        Y_1\arrow[r, "h"] & Y_2
    \end{tikzcd}
\end{equation*}
\end{lemma}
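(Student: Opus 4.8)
The plan is to reduce everything to a dimension count. Write $r=\dim\wt Y_2-\dim Y_2\ge 0$ for the generic relative dimension of the dominant morphism $g$, and let $k$ be the relative dimension of the flat morphism $h$. Fix an irreducible component $W$ of $\wt Y_1$. Since $g$ is proper, so is its base change $g_h$, and therefore $Z:=g_h(W)$ is a closed irreducible subset of $Y_1$. The assertion that $Z$ is an irreducible component of $Y_1$ is equivalent to $\dim Z=\dim Y_2+k$, once we know that every component of $Y_1$ has dimension $\dim Y_2+k$. So the first task is to prove this equidimensionality, and the second is to bound $\dim Z$ from below.

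For the equidimensionality I would invoke going-down for flat morphisms. Since $h_g\colon\wt Y_1\to\wt Y_2$ is a base change of the flat map $h$ it is itself flat, and $\wt Y_2$ is irreducible; going-down then forces the generic point of each component of $\wt Y_1$ to map to the generic point of $\wt Y_2$, i.e. every component of $\wt Y_1$ dominates $\wt Y_2$. As $h_g$ has pure relative dimension $k$, each such $W$ satisfies $\dim W=\dim\wt Y_2+k=\dim Y_2+r+k$. The same argument applied to $h\colon Y_1\to Y_2$ shows that every component of $Y_1$ dominates $Y_2$ and hence has dimension $\dim Y_2+k$; in particular $Y_1$ is equidimensional of this dimension.

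It remains to show $\dim Z\ge\dim Y_2+k$. Here I would analyse the generic fibre of the dominant morphism $g_h|_W\colon W\to Z$. Its fibre over the generic point $\zeta$ of $Z$ is contained in $g_h^{-1}(\zeta)=\wt Y_2\t_{Y_2}\operatorname{Spec}k(Z)$. Because $Z$ dominates $Y_2$, the map $\operatorname{Spec}k(Z)\to Y_2$ hits the generic point $\xi_2$, so this fibre is the base change along $k(Y_2)\hookrightarrow k(Z)$ of the generic fibre $g^{-1}(\xi_2)$ of $g$, whose dimension is $r$. Thus the generic fibre of $g_h|_W$ has dimension at most $r$, and the dimension formula for a dominant morphism of varieties gives $\dim W\le\dim Z+r$. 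Combining with $\dim W=\dim Y_2+r+k$ yields $\dim Z\ge\dim Y_2+k$. Since $Z$ is contained in some component $C$ of $Y_1$ with $\dim C=\dim Y_2+k$, we conclude $Z=C$, a component of $Y_1$.

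The step I expect to be the main obstacle is the passage between the two sides of the square when $g$ is merely dominant rather than generically finite: one must make sure that the generic fibre dimension $r$ of $g$ is the correct transfer term, that base change along $k(Y_2)\hookrightarrow k(Z)$ preserves it, and that going-down really yields dominance of \emph{every} component and not just of one. Everything else is a formal consequence of the stability of properness and flatness under base change together with the fibre-dimension formula.
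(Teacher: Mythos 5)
Your argument is correct, but it proceeds quite differently from the paper's. You prove the statement by a dimension count: going-down for the flat maps $h$ and $h_g$ shows that every irreducible component of $Y_1$ (resp.\ of $\wt Y_1$) dominates $Y_2$ (resp.\ $\wt Y_2$), which pins down all the dimensions, and then the fibre-dimension inequality for the dominant map $W\to g_h(W)$ forces $\dim g_h(W)$ to be maximal, so that the closed irreducible set $g_h(W)$ fills out a whole component. The paper instead argues topologically: $g_h$ is proper, so $g_h(W)$ is closed; $h_g$ is flat hence open, so $W$ dominates $\wt Y_2$ and therefore $Y_2$; by generic flatness one may shrink $Y_2$ so that $g$ --- and hence its base change $g_h$ --- becomes flat, hence open, so $g_h(W)$ contains a nonempty open subset of $Y_1$ and, being closed and irreducible, must be a component. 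The paper's route avoids dimension theory altogether and never needs $h$ to have a well-defined relative dimension, whereas your route needs the fibres of $h$ to be equidimensional of a fixed dimension $k$ (so that every component of $Y_1$ and of $\wt Y_1$ has the expected dimension). This is harmless here --- the lemma is only applied with $h=\psi_f$, a base change of a flat morphism of relative dimension $k$, and the paper's standing convention that all schemes are equidimensional also supplies it --- but it is an extra hypothesis relative to the bare statement, and you should either state it or reduce to it by restricting to the connected component of $Y_1$ containing $g_h(W)$, on which the fibre dimension of the flat, finite-type morphism $h$ is constant. The two points you flag as delicate (dominance of \emph{every} component via going-down, and invariance of the generic fibre dimension $r$ of $g$ under the base change $k(Y_2)\hookrightarrow k(Z)$) do hold and are used correctly.
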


\begin{proof}
    Let $Z\subset \wt Y_1$ be some irreducible component. As $g$ is proper, $g_h$ is proper as well. Hence $g_h(Z)$ is closed. It remains to show that $g_h(Z)$ contains some open subset of $Y_1$.
    
    As $h_g$ is flat, it is open and so $h_g(Z)\subset \wt Y_2$ is dense. As $g$ is dominant, this shows that the image of $Z$ in $Y_2$ is dense. This shows that for any open subset $U\subset Y_2$ the base change $Z_U=Z\times_{Y_2} U$ is a non-empty open subset of $Z$. By general flatness we can choose $U$ in such a way that the base change of $g$ to $U$ would be flat. So we can assume $g$ to be flat. In this case $g_h$ is also flat and hence open. Hence $g_h(Z)$ contains some open subset of $Y_1$. 
\end{proof}

\begin{proof}[The proof of Proposition \ref{prop:pull_back_well_defined}]Let $\ph\colon \wt Y_2\to Y_2$ be an alteration. Denote the composition $f\circ\ph$ by $\wt f$. We get the following diagram:

\begin{equation*}
    \begin{tikzcd}
        \wt Y_1 \ar[r, "\psi_{\wt f}"]\ar[d, "\ph_\psi"] & \wt Y_2\ar[d, "\ph"]\\
        Y_1 \ar[r, "\psi_f"]\ar[d, "f_\psi"] & Y_2\ar[d, "f"]\\
        X_1\ar[r, "\psi"] & X_2
    \end{tikzcd}
\end{equation*}

In this diagram $\wt Y_1$ is the fiber product $Y_1\times _{Y_2}\wt Y_2$ and the maps $\psi_{\wt f}, \ph_\psi$ are the natural projections. Let $$\psi_{\wt f}^{*}([\wt Y_2]_p)=\sum\limits_{\beta\in B}m_\beta [\wt Y_1^\beta]_{p+k}.$$ Denote the restriction of $\psi_{\wt f}$ to $\wt Y_1^\beta$ by $\psi_{\beta}$. Denote by $\wt f_\beta$ the restriction of the map $ f_\psi\circ \ph_\psi$ to $
\wt Y_1^\beta$.

It follows from Lemma \ref{lemma:map_on_each_component_dominant}, that for any $\beta\in B$, there is a unique $\alpha\in A$ such that $\ph_\psi(\wt Y_1^{\beta})= Y_1^{\alpha}$. Denote this index by $\theta(\beta)\in A$. Denote by $\ph_\beta$ the corresponding map $\wt Y_1^\beta\to Y_1^{\theta(\beta)}$.

Let us apply push and pull formula to the fibered diagram

\begin{equation}
   \begin{tikzcd}
	\wt Y_1 & \wt Y_2\\
 Y_1 & Y_2
	\arrow["\psi_{\wt f}", from=1-1, to=1-2]
	\arrow["\varphi", from=1-2, to=2-2]
	\arrow["\varphi_\psi", from=1-1, to=2-1]
	\arrow["\psi_f", from=2-1, to=2-2]
\end{tikzcd} 
\end{equation}

We get 
\begin{align*}
 &\varphi_*([\wt Y_2]_p)=(\deg \ph)[Y_2]_p,\quad
 (\ph_\beta)_*([\wt Y_1^\beta]_{p+k})=\deg(\ph_\beta)[Y_1^{\theta(\beta)}]_{p+k},\\
 &(\psi_{\wt f})^*([\wt Y_2]_p)=\sum\limits_{\beta\in B} m_\beta[\wt Y_1^\beta]_{p+k},\quad
 \psi_f^*([Y_2]_p)=\sum\limits_{\alpha \in A} n_\alpha[Y_1^\alpha]_{p+k}. 
\end{align*}
 So push and pull formula $(\psi_{f})^*\circ \varphi_* = (\varphi_{\psi})_*\circ(\psi_{\wt f})^*$ implies

\begin{align*}
    &(\deg \ph)\sum\limits_{\alpha\in A} n_\alpha[Y_1^\alpha]_{p+k}= \sum\limits_{\beta\in B} m_\beta(\varphi_{\psi})_*([\wt Y_1^\beta]_{p+k})=\\
    &\sum\limits_{\alpha\in A} \sum\limits_{\beta\in\theta^{-1}(\alpha)} m_\beta\deg(\ph_\beta)[Y_1^\alpha]_{p+k}.
\end{align*}

So for any $\alpha$

$$n_\alpha\deg\ph=\sum\limits_{\beta\in\theta^{-1}(\alpha)} m_\beta\deg(\ph_\beta).$$

Using this formula we get
\begin{align*}
    &\psi^*([\wt Y_2, \ph^*(a), \wt f])=\sum\limits_{\beta\in B}m_\beta[\wt Y_1^\beta, (\psi_\beta^*\circ \ph^*)(a), \wt f_\beta]=\\
    &\sum\limits_{\beta\in B} m_\beta \deg \ph_\beta[ Y_1^{\theta(\beta)}, \psi_{\theta(\beta)}^*(a), f_{\theta(\beta)}]=(\deg\phi)\sum\limits_{\alpha\in A}n_\alpha[Y_1^\alpha, \psi_\alpha^*(a), f_\alpha].
\end{align*}

So the map $\psi^*$ is well-defined. 
\end{proof}

Let $Y$ be a variety. For a Cartier divisor $D$ on $Y$, denote the corresponding Weil divisor by $[D]$. 
We need the following lemma \cite[Proposition 1.4]{fulton2013intersection}:

\begin{lemma}
\label{lemma:degree_of_morphism}
    Let $Y_1$, $Y_2$ be varieties and $\varphi\colon Y_1\to Y_2$ be an alteration. Then for any Cartier divisor $D$ on $Y_2$ we have
    $$\varphi_*[\varphi^*(D)]=(\deg\varphi)[D].$$ 
    In this formula $\ph^*$ is pullback on Cartier divisors and $\ph_*$ is pushforward on Weil divisors.
\end{lemma}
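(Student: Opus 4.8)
The plan is to verify the identity coefficientwise, comparing the multiplicity of each prime divisor of $Y_2$ on the two sides; once this is set up, the entire statement collapses onto the fundamental identity $\deg\varphi=\sum ef$ already recorded in Lemma \ref{lemma:degree_is_a_sum_multiplicties}. Since both $\varphi_*[\varphi^*(D)]$ and $(\deg\varphi)[D]$ are Weil divisors on $Y_2$, it suffices to fix an arbitrary prime divisor $V\subset Y_2$ and check that the coefficient of $[V]$ agrees on both sides. On the right-hand side this coefficient is plainly $(\deg\varphi)\,\ord_V(D)$, where $\ord_V(D)$ denotes the multiplicity of $D$ along $V$, i.e. the value of a local equation of $D$ under the valuation attached to $V$.

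For the left-hand side I would write $[\varphi^*(D)]=\sum_W\ord_W(\varphi^*(D))[W]$, the sum running over prime divisors $W\subset Y_1$. Recall that $\varphi_*[W]=f(W,\overline{\varphi(W)})\,[\overline{\varphi(W)}]$ when $\overline{\varphi(W)}$ is again a prime divisor, and $\varphi_*[W]=0$ when $W$ is contracted by $\varphi$. Hence the divisors contracted by $\varphi$ drop out automatically, and the only $W$ contributing to the coefficient of $[V]$ are those with $\overline{\varphi(W)}=V$, that is, $W\in Div(\varphi,V)$. For such a $W$, a local equation $g$ of $D$ at the generic point of $V$ pulls back to a local equation $\varphi^*(g)$ of $\varphi^*(D)$ at the generic point of $W$, and the ramification identity $\ord_W(\varphi^*(D))=e(W,V)\,\ord_V(D)$ holds; this is exactly Lemma \ref{lemma:functoriality_of_residue_2} specialized to $n=1$ (where $\ts_\nu$ is $\ord_\nu$ and the pullback on the $0$-form $\ord_V(D)$ is trivial). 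Collecting terms, the coefficient of $[V]$ on the left becomes $\ord_V(D)\sum_{W\in Div(\varphi,V)}e(W,V)f(W,V)$.

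Finally I would invoke Lemma \ref{lemma:degree_is_a_sum_multiplicties}, which gives $\sum_{W\in Div(\varphi,V)}e(W,V)f(W,V)=\deg\varphi$, so the left-hand coefficient equals $(\deg\varphi)\,\ord_V(D)$, matching the right-hand side; as $V$ was arbitrary, this proves the lemma. The only genuine content sits in Lemma \ref{lemma:degree_is_a_sum_multiplicties} (the $\sum ef=\deg$ identity of valuation theory), and this is where I would expect the real obstacle to live; everything else is routine ramification bookkeeping. The one point requiring a little care is the vanishing of the contracted-divisor terms, which is what guarantees that no spurious contributions survive on the left. As an alternative I would note that the whole statement is an instance of the projection formula $\varphi_*\!\big(c_1(\mc O(\varphi^*D))\cap[Y_1]\big)=c_1(\mc O(D))\cap\varphi_*[Y_1]$ combined with $\varphi_*[Y_1]=(\deg\varphi)[Y_2]$, which is the route underlying the cited reference \cite{fulton2013intersection}.
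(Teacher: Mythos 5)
Your write-up is essentially correct, but there is nothing in the paper to compare it against: the paper does not prove this lemma, it simply quotes it from \cite[Proposition 1.4]{fulton2013intersection}. Your coefficientwise verification is the standard argument behind that citation, and the reduction to $\sum e f=\deg\varphi$ via Lemma \ref{lemma:degree_is_a_sum_multiplicties}, together with the observation that contracted divisors push forward to zero, is exactly the right bookkeeping. One caveat deserves attention: the identity $\ord_W(\varphi^*(D))=e(W,V)\,\ord_V(D)$ is a genuine ramification statement only when $\mc O_{Y_2,V}$ is a discrete valuation ring, i.e.\ when $Y_2$ is normal (or regular in codimension one). The lemma is stated for arbitrary varieties, and in the paper it is applied with $Y_2=\wt Z$ the closure of a cycle in $X\t(\P^1)^n$, which need not be normal; there $\ord_V$ is the length-theoretic order function, $e(W,V)$ need not even be well defined, and $\ord_W(\varphi^*g)$ can depend on more than $\ord_V(g)$, since the branches of the normalization lying over $V$ may be weighted differently. (Relatedly, Lemma \ref{lemma:functoriality_of_residue_2}, which you invoke for this step, is stated only for smooth varieties.) To cover the general case you should either first pass to the normalization of $Y_2$, using $\ord_V(g)=\sum_{V'}f(V',V)\ord_{V'}(g)$ and the fact that $\varphi$ factors through the normalization when $Y_1$ is normal, or simply rely on the projection-formula argument you sketch at the end, which is Fulton's actual route and is valid for arbitrary varieties.
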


\begin{proposition}
    The map $\psi^*$ is a morphism of complexes.
\end{proposition}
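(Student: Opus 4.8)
The plan is to show that $\psi^*$ commutes with the differential, i.e. that $d\circ\psi^*=\psi^*\circ d$ on every generator $[Y_2,a,f]$. Since both $d$ and $\psi^*$ are already known to be well-defined (Proposition \ref{prop:d_is_well_defined} and Proposition \ref{prop:pull_back_well_defined}), I am free to replace $Y_2$ by a smooth alteration on which $a$ becomes strictly regular: the two displayed computations are unchanged by this replacement. Thus I assume from the start that $Y_2$ is smooth, that $a\in\Lambda^n(Y_2)$ is strictly regular, and that $d[Y_2,a,f]=\sum_{D\subset Y_2}\ts_D(a,f)=\sum_{D\subset Y_2}[D,\ts_D(a),\res{f}{D}]$, the sum running over irreducible divisors of $Y_2$.

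On the left-hand side, writing $[Y_1]_{p+k}=\sum_\alpha n_\alpha[Y_1^\alpha]_{p+k}$ as in the definition of $\psi^*$, with $\psi_\alpha\colon Y_1^\alpha\to Y_2$ dominant and $f_\alpha\colon Y_1^\alpha\to X_1$ proper, I have $d(\psi^*[Y_2,a,f])=\sum_\alpha n_\alpha\, d[Y_1^\alpha,\psi_\alpha^*(a),f_\alpha]$. For each $\alpha$ I choose a smooth alteration $\sigma_\alpha\colon Z_\alpha\to Y_1^\alpha$ making $\sigma_\alpha^*\psi_\alpha^*(a)$ strictly regular, and set $\pi_\alpha=\psi_\alpha\circ\sigma_\alpha\colon Z_\alpha\to Y_2$, so that $d[Y_1^\alpha,\psi_\alpha^*(a),f_\alpha]=\tfrac1{\deg\sigma_\alpha}\sum_{E\subset Z_\alpha}\ts_E(\pi_\alpha^*(a),f_\alpha\circ\sigma_\alpha)$. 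I classify the divisors $E\subset Z_\alpha$ by the image $\overline{\pi_\alpha(E)}\subset Y_2$: I call $E$ \emph{vertical} if this image is all of $Y_2$ and \emph{horizontal} if the image is an irreducible divisor $D$. If $E$ is vertical, then the generic point of $E$ dominates that of $Y_2$, so $\nu_E$ is trivial on $\pi_\alpha^*\k(Y_2)^\t$ and every factor of $\pi_\alpha^*(a)$ is a unit along $E$; hence $\ts_E(\pi_\alpha^*(a))=0$ by Proposition \ref{prop:tame_symbol_classiacal}. The remaining divisors, those whose image has codimension $\ge 2$ in $Y_2$ (including the divisors contracted by $\sigma_\alpha$), contribute $0$ in $\Lambda(X_1,m)$ by the same degeneracy mechanism used in the well-definedness proof (Theorem \ref{th:differential_strictly_regular_elements} together with Lemmas \ref{lemma:about_degenerate_cycles} and \ref{lemma:differential_normal_crossing_zero}, applied after localizing at the generic point of the image using strict regularity of $\pi_\alpha^*(a)$). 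Only horizontal divisors survive.

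For a horizontal $E$ lying over $D$, both $Z_\alpha$ and $Y_2$ are smooth and $\pi_\alpha$ is dominant, so Lemma \ref{lemma:functoriality_of_residue_2} gives $\ts_E(\pi_\alpha^*(a))=e(E,D)\,\overline{\pi_\alpha}^*(\ts_D(a))$, where $\overline{\pi_\alpha}\colon E\to D$ is the induced map. Thus the horizontal part of the left-hand side, grouped by $D$, equals $\sum_{D\subset Y_2}\sum_\alpha \tfrac{n_\alpha}{\deg\sigma_\alpha}\sum_{E/D} e(E,D)\,[E,\overline{\pi_\alpha}^*(\ts_D(a)),\cdot]$. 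On the right-hand side, $\psi^*[D,\ts_D(a),\res{f}{D}]$ is computed from the fibre product $X_1\t_{X_2}D=(X_1\t_{X_2}Y_2)\t_{Y_2}D$, whose components are exactly the preimages of $D$ in the $Y_1^\alpha$ — the same loci that resolve to the horizontal divisors $E$. The identity therefore reduces to a single statement: that the cycle-theoretic flat pullback commutes with the operation of restricting to the divisor $D$, i.e. that the multiplicities $n_\beta$ of the components of $X_1\t_{X_2}D$ agree with the weights $n_\alpha\,e(E,D)/\deg\sigma_\alpha$ produced above, and that the pulled-back functions match. I expect the multiplicity bookkeeping to be the main obstacle: one must control the multiplicities $n_\alpha$ (coming from $X_1\t_{X_2}Y_2$), the ramification indices $e(E,D)$ and residue degrees $f(E,D)$ (coming from the ramified tame symbol), and the degrees $\deg\sigma_\alpha$ simultaneously on a common smooth model that resolves both $X_1\t_{X_2}Y_2$ and the relevant divisors over $D$. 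The tools for this are the degree–multiplicity decomposition of Lemma \ref{lemma:degree_is_a_sum_multiplicties}, the push–pull identity already exploited in Proposition \ref{prop:pull_back_well_defined}, and the projection formula of Lemma \ref{lemma:degree_of_morphism}, which together express the equality as the compatibility of flat pullback with the divisor-of-a-function map in Fulton's intersection theory.
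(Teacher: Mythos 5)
Your overall strategy coincides with the paper's: reduce to a strictly regular $a$ on a smooth $Y_2$, compute $d\circ\psi^*$ on smooth alterations of the components $Y_1^\alpha$, discard the divisors whose tame symbol term vanishes, and match the surviving ones against $\psi^*\circ d$ via Lemma \ref{lemma:functoriality_of_residue_2}. But the proof is not finished: the entire content of the statement is the multiplicity identity you defer to at the end, and you only name the tools ("I expect the multiplicity bookkeeping to be the main obstacle") without carrying it out. Concretely, what must be proved is, for each $D\subset Y_2$ and each $\alpha$, that
$$\sum_{D''\in \Div(\sigma_\alpha, D')}\mult\bigl(D'', \pi_\alpha^{*}(D)\bigr)\,f(D'',D')=(\deg\sigma_\alpha)\,\mult\bigl(D',\psi_\alpha^{*}(D)\bigr),$$
which follows from the projection formula $(\sigma_\alpha)_*[\sigma_\alpha^{*}\psi_\alpha^{*}(D)]=(\deg\sigma_\alpha)[\psi_\alpha^{*}(D)]$ (Lemma \ref{lemma:degree_of_morphism}), together with the identification, via \cite[Lemma 1.7.2]{fulton2013intersection}, of the cycle $[\psi_f^{-1}(D)]_{p+k-1}$ defining $\psi^{*}[D,\ts_D(a)]$ with $\sum_\alpha n_\alpha (i_\alpha)_*[\psi_\alpha^{-1}(D)]_{p+k-1}$. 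Without these two steps the weights $n_\beta$ on the right-hand side and the weights $n_\alpha\,e(E,D)f(E,D)/\deg\sigma_\alpha$ on the left-hand side are not compared, and the proposition is not established.

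Two smaller inaccuracies in the divisor classification. First, it is not true that every divisor contracted by $\sigma_\alpha$ has image of codimension $\ge 2$ in $Y_2$: a contracted divisor can perfectly well dominate $Y_2$ or a divisor $D\subset Y_2$. Those landing in your "horizontal" class would then produce extra terms $e(E,D)[E,\overline{\pi_\alpha}^{*}(\ts_D(a))]$ that are not matched on the other side; they do vanish, but only because $E\to D$ factors through $\sigma_\alpha(E)$, which has smaller dimension than $E$, so the class is degenerate by Lemma \ref{lemma:degenerate_cycles} --- you need to say this. The cleaner split (the paper's) is by whether $E$ is contracted by $\sigma_\alpha$ or not: contracted divisors die by Lemma \ref{lemma:differential_normal_crossing_zero} (whose codimension hypothesis is preserved under the flat map $\psi_f$), and non-contracted divisors have image of codimension $\le 1$ in $Y_2$ by flatness, so only the vertical/horizontal dichotomy remains for them. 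Second, Theorem \ref{th:differential_strictly_regular_elements} does not apply to your "codimension $\ge 2$" class as stated, because $\pi_\alpha=\psi_\alpha\circ\sigma_\alpha$ is not an alteration (it is not generically finite); the vanishing has to be run through the alteration $\sigma_\alpha$ and Lemma \ref{lemma:differential_normal_crossing_zero}, not through that theorem.
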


\begin{proof}
Let $x=[Y_2,a,f]$. We can assume that this element is strictly regular. We have

$$d(x)=d([Y_2,a,f])=\sum\limits_{D\subset Y_2}[D, \ts_D(a)].$$

The base change $D\t_{X_2}X_1$ is equal to scheme-theoretic pre-image $\psi_f^{-1}(D)$. According to Lemma 1.7.2 from \cite{fulton2013intersection} the cycle $[(\psi_f)^{-1}(D)]_{p+k-1}$ is equal to 
$$\sum\limits_{\alpha\in A}n_{\alpha} (i_\alpha)_*([\psi_\alpha^{-1}(D)]_{p+k-1}).$$
In this formula $i_\alpha$ is the natural embedding $Y_1^\alpha\emb Y_1$.  For a divisor $D'\in\Div(\psi_\alpha, D)$, denote by $w_{D'}$ the natural map $D'\to D$. The previous discussion shows that for any element $b\in \L^n(D)$
$$\psi^*([D, b])=\sum\limits_{\alpha\in A}\sum\limits_{D'\in \Div(\psi_\alpha, D)}n_\alpha\mult(D', \psi_\alpha^*(D))[D', w_{D'}^*(b)].$$

Applying this statement to $b=\ts_D(a)$, we get

$$\psi^*(d([Y,a,f]))=\sum\limits_{D\subset Y_2}\sum\limits_{\alpha\in A}\sum\limits_{D'\in \Div(\psi_\alpha, D)}n_\alpha\mult(D', \psi_\alpha^*(D))[D', w_{D'}^*(\ts_D(a))].$$

On the other hand

$$\psi^*([Y,a, f])=\sum\limits_{\alpha\in A}n_\alpha [Y_1^\alpha, \psi_\alpha^*(a)].$$

Let $\ph_\alpha\colon \wt Y_1^\alpha\to Y_1^\alpha$ be an alteration such that the element $\ph_\alpha^*(\psi_\alpha^*(a))$ is strictly regular. Denote $g_\alpha=\psi_\alpha\circ\ph_\alpha$ We get

$$d(\psi^*(x))=\sum\limits_{\alpha\in A}n_\alpha\dfrac 1{\deg\ph_\alpha}\sum\limits_{D\subset \wt Y_1^\alpha}[D, \ts_C((g_\alpha)^*(a))].$$

So it is enough to check that for any $\alpha\in A$

$$\sum\limits_{D\subset Y_2}\sum\limits_{D'\in \Div(\psi_\alpha, D)}\mult(D', \psi_\alpha^*(D))[D', w_{D'}^*(\ts_{D}(a)))] = \dfrac 1{\deg\ph_\alpha}\sum\limits_{D''\subset \wt Y_1^\alpha}[D'', \ts_{D''}(g_\alpha^*(a))]$$

We have

$$\sum\limits_{D''\subset \wt Y_1^\alpha}[D'', \ts_{D''}(g_\alpha^*(a))]=\sum\limits_{D''\in Div(\ph_\alpha)_0}[D'', \ts_{D''}(g_\alpha^*(a))]+\sum\limits_{D''\in Div(\ph_\alpha)}[D'', \ts_{D''}(g_\alpha^*(a))].$$

The first sum is equal to zero by Lemma \ref{lemma:differential_normal_crossing_zero} and by the fact that $\psi$ is flat of relative dimension $k$. Let $D''\in Div(\ph_\alpha)$. The codimension of $g_\alpha(D'')$ cannot be bigger then $1$ as $\psi_f$ is flat. If this codimension is equal to zero, then $[D'', \ts_{D''}(g_\alpha^*(a))]=0$. So we get

$$\sum\limits_{D''\in Div(\ph_\alpha)}[D'', \ts_{D''}(g_\alpha^*(a))]=\sum\limits_{D\subset Y_2}\sum\limits_{D''\in \Div(g_\alpha, D)}[D'', \ts_{D''}(g_\alpha^*(a))].$$

So it remains to check that for any $D\subset Y_2$ and any $\alpha \in A$ we have

$$\sum\limits_{D'\in \Div(\psi_\alpha, D)}\mult(D', \psi_\alpha^*(D))[D', w_{D'}^*(\ts_{D}(a))] =\dfrac 1{\deg g_\alpha}\sum\limits_{D''\in \Div(g_\alpha, D)}[D'', \ts_{D''}(g_\alpha^*(a))].$$

For a divisor $D''\in \Div(g_\alpha, D)$ denote the natural map $D''\to D$ by $h_{D''}$. By Lemma \ref{lemma:functoriality_of_residue_2}, we get

\begin{align*}
   &\sum\limits_{D''\in \Div(g_\alpha, D)}[D'', \ts_{D''}(g_\alpha^*(a))]=\sum\limits_{D'\in \Div(\psi_\alpha, D)}\sum\limits_{D''\in \Div(\ph_\alpha, D')}[D'', \ts_{D''}(g_\alpha^*(a))] =\\
   & \sum\limits_{D'\in \Div(\psi_\alpha, D)}\sum\limits_{D''\in \Div(\ph_\alpha, D')}\mult(D'', g_\alpha^{*}(D))[D'', h_{D''}^*(\ts_D(a))] =\\
   &\sum\limits_{D'\in \Div(\psi_\alpha, D)}\sum\limits_{D''\in \Div(\ph_\alpha, D')}\mult(D'', g_\alpha^{*}(D))f(D'', D')[D', w_{D'}^*(\ts_D(a))].
\end{align*}

By Lemma \ref{lemma:degree_of_morphism}, we know that $(\ph_\alpha)_*[\ph_\alpha^*(\psi_\alpha^*(D))]=(\deg\ph_\alpha)[\psi_\alpha^*(D)]$. This implies that

$$\sum\limits_{D''\in \Div(\ph_\alpha, D')}\mult(D'', g_\alpha^{*}(D))\deg{D''/D'}=(\deg \ph_\alpha)\mult(D', \psi_\alpha^*(D)).$$

The statement follows.   
\end{proof}

\begin{proposition}
    If $\psi_1, \psi_2$ are flat morphism, then $\psi_2^*\psi_1^*=(\psi_1\psi_2)^*$.
\end{proposition}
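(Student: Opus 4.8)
The plan is to reduce to a single generator $[Y,a,f]$ with $f\colon Y\to X_2$ proper and $a\in\L^n(Y)$, and to compare the two iterated fiber-product constructions by means of the elementary scheme-theoretic identity
$$X_0\t_{X_2}Y\;\cong\;X_0\t_{X_1}(X_1\t_{X_2}Y),$$
where I write $\psi_1\colon X_1\to X_2$ and $\psi_2\colon X_0\to X_1$, flat of relative dimensions $k_1,k_2$, so that both $\psi_2^*\psi_1^*$ and $(\psi_1\psi_2)^*$ land in $\L(X_0,m)$. Since both maps are $\Q$-linear, it suffices to treat one generator.

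First I would recast the pullback in terms of flat pullback of cycles. The base change $\psi_{1,f}\colon X_1\t_{X_2}Y\to Y$ is flat of relative dimension $k_1$, and by construction $\psi_{1,f}^*[Y]=[X_1\t_{X_2}Y]_{p+k_1}=\sum_\alpha n_\alpha[Y_1^\alpha]$; thus $\psi_1^*([Y,a,f])=\sum_\alpha n_\alpha[Y_1^\alpha,\psi_\alpha^*(a),f_\alpha]$, with $\psi_\alpha\colon Y_1^\alpha\to Y$ and $f_\alpha\colon Y_1^\alpha\to X_1$ the restricted projections. Applying $\psi_2^*$ termwise, the base change $q_\alpha\colon X_0\t_{X_1}Y_1^\alpha\to Y_1^\alpha$ of $\psi_2$ is flat and $q_\alpha^*[Y_1^\alpha]=\sum_\gamma m_{\alpha\gamma}[Z_{\alpha\gamma}]$, so that
$$\psi_2^*\psi_1^*([Y,a,f])=\sum_{\alpha}\sum_{\gamma}n_\alpha m_{\alpha\gamma}\,[Z_{\alpha\gamma},\,\pi_{\alpha\gamma}^*\psi_\alpha^*(a),\,g_{\alpha\gamma}],$$
where $\pi_{\alpha\gamma}\colon Z_{\alpha\gamma}\to Y_1^\alpha$ is the projection and $g_{\alpha\gamma}\colon Z_{\alpha\gamma}\to X_0$ the restricted structure map.

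Next I would compute $(\psi_1\psi_2)^*([Y,a,f])$ directly. Under the identity above, the projection $(\psi_1\psi_2)_f\colon X_0\t_{X_2}Y\to Y$ factors as $\psi_{1,f}\circ q$, where $q\colon X_0\t_{X_1}(X_1\t_{X_2}Y)\to X_1\t_{X_2}Y$ is the base change of $\psi_2$, hence flat. By functoriality of flat pullback of cycles (\cite[Prop.~1.7]{fulton2013intersection}),
$$\big((\psi_1\psi_2)_f\big)^*[Y]=q^*\psi_{1,f}^*[Y]=\sum_\alpha n_\alpha\,q^*[Y_1^\alpha].$$
Since $q$ is flat and $q^{-1}(Y_1^\alpha)=X_0\t_{X_1}Y_1^\alpha$ scheme-theoretically, one has $q^*[Y_1^\alpha]=\sum_\gamma m_{\alpha\gamma}[Z_{\alpha\gamma}]$, so the cycle underlying $(\psi_1\psi_2)^*([Y,a,f])$ has exactly the components $Z_{\alpha\gamma}$ with multiplicities $n_\alpha m_{\alpha\gamma}$, matching the previous display. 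It then remains to match the distinguished elements: on $Z_{\alpha\gamma}$ the projection to $Y$ is $\psi_\alpha\circ\pi_{\alpha\gamma}$, so the element attached in $(\psi_1\psi_2)^*$ is $(\psi_\alpha\circ\pi_{\alpha\gamma})^*(a)=\pi_{\alpha\gamma}^*\psi_\alpha^*(a)$ by the functoriality $(g\circ h)^*=h^*\circ g^*$ of pullback on $\L^n$, which is precisely the element appearing in $\psi_2^*\psi_1^*$; the maps $g_{\alpha\gamma}$ to $X_0$ agree as well, both being restrictions of the universal projection $X_0\t_{X_2}Y\to X_0$.

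The main obstacle is the bookkeeping of multiplicities: one must be sure that $[X_0\t_{X_2}Y]$ decomposes identically whether formed in one step or in two. The irreducible components visibly coincide, but their scheme structures — and hence the integers $n_\alpha m_{\alpha\gamma}$ — require that $q^*$ commute with the decomposition $\psi_{1,f}^*[Y]=\sum_\alpha n_\alpha[Y_1^\alpha]$, and this is exactly where the functoriality of flat pullback of cycles is indispensable. Once that cycle-level identity is in place, the remaining identifications of the rational data $a$ and of the structure maps are purely formal consequences of the functoriality of pullback on multiplicative groups and their exterior powers.
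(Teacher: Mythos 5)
Your argument is correct and is exactly the paper's intended proof: the paper disposes of this proposition in one line by citing functoriality of flat pullback on algebraic cycles from Fulton, and your write-up simply carries out the bookkeeping (iterated fiber products, $q^*\psi_{1,f}^*[Y]=((\psi_1\psi_2)_f)^*[Y]$, and functoriality of $\varphi^*$ on $\Lambda^n$) that this citation leaves implicit. No gaps; same route.
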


\begin{proof}
    Follows from functoriality of flat pullback on algebraic cycles (see \cite{fulton2013intersection}).
\end{proof}

\begin{corollary}
\label{cor:Galois_descent}
We have:
\begin{enumerate}
    \item  Let $X$ be a variety over $L_1$ and $L_1\subset L_2$ be a finite extension. Denote the natural map $X\t_{\spec L_1} \spec L_2\to X$ by $\psi$. Then $\psi_*\psi^*$ is a multiplication by $\deg L_2/L_1$. 
    \item If $L_2/L_1$ is Galois then $\L^{*}(\spec L_2, m)^{Gal(L_2/L_1)}=\L^{*}(\spec L_1, m)$.
\end{enumerate}
\end{corollary}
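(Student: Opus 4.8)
The plan is to reduce everything to the flat pullback machinery just established. For the first claim, the morphism $\psi\colon X\t_{\spec L_1}\spec L_2\to X$ is flat (base change along a field extension), so $\psi^*$ is defined, and it is also finite, hence proper, so $\psi_*$ is defined. Given a generator $[Y,a,f]\in\L(\spec L_1,m)$ with $Y$ a variety over $L_1$, I would compute $\psi^*[Y,a,f]$ using the definition from Subsection~\ref{sub:sec:flat_pullback}: form the fiber product $Y_1=Y\t_X(X\t_{\spec L_1}\spec L_2)=Y\t_{\spec L_1}\spec L_2=Y_{L_2}$, decompose it into irreducible components $Y_1^\alpha$ with multiplicities $n_\alpha$, and obtain $\psi^*[Y,a,f]=\sum_\alpha n_\alpha[Y_1^\alpha,\psi_\alpha^*(a),f_\alpha]$. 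Applying $\psi_*$ (which only postcomposes the structure map, leaving the variety and the element $a$-pullback untouched) and then recombining, the key identity is that $\sum_\alpha n_\alpha\,[\text{$Y_1^\alpha$ mapping to $Y$}]$ recovers $\deg(L_2/L_1)\cdot[Y,a,f]$ via the relation $[Y_1^\alpha,\psi_\alpha^*(a),f\circ\psi_\alpha]=\tfrac1{\deg\psi_\alpha}[\ldots]$ together with $\sum_\alpha n_\alpha f(Y_1^\alpha/Y)=\deg(L_2/L_1)$. This last numerical identity is exactly the statement that the degree of a field extension is the sum over primes of ramification times residue degree, specialized to the geometric components; it is the analog of Lemma~\ref{lemma:degree_is_a_sum_multiplicties} applied fiberwise.

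**Corollary's second claim.** For the Galois descent statement, let $G=\mathrm{Gal}(L_2/L_1)$. The inclusion $\L^*(\spec L_1,m)\subseteq \L^*(\spec L_2,m)^G$ is clear: $\psi^*$ lands in the $G$-invariants since $\psi\circ g=\psi$ for every $g\in G$ (the map $\psi$ forgets the $L_2$-structure), so $g^*\psi^*=\psi^*$. For the reverse inclusion, the standard averaging argument applies. Since we work over $\Q$ and $|G|=\deg(L_2/L_1)$ is invertible, the projector $e=\tfrac1{|G|}\sum_{g\in G}g^*$ is defined; by the first claim $\psi_*\psi^*=\deg(L_2/L_1)\cdot\id$, so $\tfrac1{\deg(L_2/L_1)}\psi_*\colon \L^*(\spec L_2,m)\to\L^*(\spec L_1,m)$ is a retraction of $\psi^*$. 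The heart of the matter is showing that on the $G$-invariant subspace this retraction followed by $\psi^*$ recovers the identity, i.e. $\psi^*\circ\tfrac1{\deg}\psi_*=e$ on all of $\L^*(\spec L_2,m)$, so that $\psi^*$ identifies $\L^*(\spec L_1,m)$ with the image of $e$, which is precisely $\L^*(\spec L_2,m)^G$. I would verify $\psi^*\psi_*=\sum_{g\in G}g^*$ by the same component-counting as in the first claim, now using that the fiber product $\spec L_2\t_{\spec L_1}\spec L_2\cong\coprod_{g\in G}\spec L_2$ for a Galois extension, which geometrizes the group $G$.

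**Main obstacle.** The hard part will be carefully matching the scheme-theoretic multiplicities $n_\alpha$ arising in the Fulton-style definition of $\psi^*$ with the field-theoretic ramification/residue data, especially when $L_2/L_1$ is inseparable or when $Y$ is not geometrically integral so that $Y_{L_2}$ genuinely decomposes. Since we are in characteristic zero, inseparability does not occur, which simplifies matters: all the $n_\alpha=1$ and the components are transpositions of one another under $G$ in the Galois case. The cleanest route is to first prove the statement when $Y=\spec E$ for a finite extension $E/L_1$ (so the decomposition of $E\otimes_{L_1}L_2$ into fields is classical Galois theory), then bootstrap to arbitrary $Y$ by noting that $\psi^*$ and $\psi_*$ commute with passing to the generic point and with the colimit presentation of $\L$, so that the general case follows from the function-field case $E=k(Y)$ over the base $L_1$.
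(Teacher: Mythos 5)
Your proposal is correct and follows essentially the same route as the paper: item (1) is proved by the identical computation $\psi_*\psi^*([Y,a,f])=\sum_\alpha n_\alpha[Y_1^\alpha,\psi_\alpha^*(a),f\circ\psi_\alpha]=\bigl(\sum_\alpha n_\alpha\deg\psi_\alpha\bigr)[Y,a,f]=\deg(L_2/L_1)[Y,a,f]$, and item (2) by combining the injectivity of $\psi^*$ (via the retraction $\tfrac{1}{\deg}\psi_*$ from item (1)) with the standard formula $\psi^*\psi_*=\sum_{g\in\mathrm{Gal}(L_2/L_1)}g^*$, which the paper likewise leaves as standard.
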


\begin{proof}
    \begin{enumerate}
        \item We have
        \begin{align*}
            &\psi_*\psi^*([Y, a, f])=\sum\limits_{\alpha\in A}n_\alpha[Y_1^\alpha, \psi_\alpha^*(a), \psi\circ f_\alpha]=\\
            &\sum\limits_{\alpha\in A}n_\alpha[Y_1^\alpha, \psi_\alpha^*(a), f\circ \psi_\alpha]=[Y, a, f]\sum\limits_{\alpha\in A}n_\alpha\deg(\psi_\alpha)=[Y,a,f]\deg(L_2/L_1).
        \end{align*}
    \item We recall that $\L(X, m)$ is a vector space over $\Q$. It follows from the previous item that the natural map  $$\L^*(\spec L_1, m) \to \L^*(\spec L_2, m)^{Gal(L_2/L_1)}$$
    is injective.
    It remains to show that
    $$\psi^*\psi_*=\sum\limits_{g\in Gal(L_2/L_1)}g^*.$$
    The proof of this formula is standard.
    \end{enumerate}
\end{proof}

\subsection{Localization}
\label{sub:sec:localization}
\begin{theorem}
\label{th:localization}
    Let $i\colon Z\to X$ be a closed embedding of codimension $k$ closed subset (not necessarily irreducible). Denote by $j$ the corresponding open embedding $U\to X$. The following sequence is exact:
    $$0\to \L(Z, m-k)[-2k]\xrightarrow{i_*}\L(X, m)\xrightarrow{j^*} \L(U,m)\to 0. $$
\end{theorem}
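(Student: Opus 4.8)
The plan is to establish exactness of the sequence
$$0\to \L(Z, m-k)[-2k]\xrightarrow{i_*}\L(X, m)\xrightarrow{j^*} \L(U,m)\to 0$$
by checking exactness at each of the three positions separately, working in each homological degree $j$ (equivalently with fixed $(n,p)$). Throughout I would use that a generator $[Y,a,f]$ is specified by a proper $f\colon Y\to X$ with $\dim Y = p$, and that the behavior of such a generator is governed by whether the image $f(Y)$ meets $U$ or lies entirely inside $Z$. First I would verify that the composite $j^*\circ i_*$ is zero: if $f$ factors through $Z$, then the fiber product $Y\times_X U$ is empty, so by the definition of flat pullback the sum over irreducible components is vacuous and $j^*i_*[Y,a,f]=0$.

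Next I would handle surjectivity of $j^*$, which I expect to be the easiest nontrivial part. Given a generator $[Y_U, a, f_U]$ of $\L(U,m)$ with $f_U\colon Y_U\to U$ proper, I would take the closure $Y$ of $Y_U$ inside a compactification over $X$; more precisely, since $Y_U$ is a variety over $U$, its field of fractions $\k(Y_U)=\k(Y)$ is unchanged by passing to any proper model $Y\to X$ extending $f_U$, and $a\in\L^n(\k(Y_U)^\t)=\L^n(\k(Y)^\t)$ makes sense on $Y$. One must check that $j^*[Y,a,f]=[Y_U,a,f_U]$, which follows because the only component of $Y\times_X U$ dominating $Y_U$ is $Y_U$ itself with multiplicity one (the other components lie over $Z$ and are discarded, but since we restrict to $U$ they simply do not appear). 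Exactness at $\L(X,m)$ is the place where the kernel of $j^*$ must be identified with the image of $i_*$: if $j^*[Y,a,f]=0$ in $\L(U,m)$, I would argue that after choosing an alteration making $a$ strictly regular and decomposing the pullback into components meeting $U$ versus components contracted into $Z$, the part meeting $U$ must vanish in $\L(U,m)$, and the genuine obstruction is that $[Y,a,f]$ differs from something supported on $Z$ by a class already in the image of $i_*$. The main subtlety is that $j^*$ need not be injective on individual generators, so one argues at the level of the colimit: a general element of $\ker j^*$ is a finite sum, and one uses the relations defining $\L$ (compatibility under alterations and the additivity in $a$) to move everything supported over $U$ into a single term whose restriction is zero, forcing it to be zero already over $X$ by the colimit description.

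The step I expect to be the genuine obstacle is exactness in the middle together with injectivity of $i_*$. For injectivity of $i_*$, the difficulty is that a generator $[Y,a,f]$ with $f$ factoring through $Z$ could a priori become zero in $\L(X,m)$ through relations that involve varieties mapping to $X$ but not to $Z$; one must show no such relation exists, i.e. that the defining relations of $\L(X,m)$ restricted to generators supported over $Z$ are exactly the defining relations of $\L(Z,m-k)$. The cleanest way to see this is via the colimit description $\L(X,m)_j=\colim_{Y\in\mc{Var}_{X,p}}\Lambda^n(Y)$: the full subcategory of those $Y$ whose structure map factors through $Z$ is cofinal among generators supported set-theoretically over $Z$, and the colimit over this subcategory computes precisely $\L(Z,m-k)_{j-2k}$ after the dimension shift $\dim Z=\dim X-k$. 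I would make this precise by checking that any alteration of a $Z$-variety is again a $Z$-variety and that the residue maps $\ts_D$ preserve the property of being supported over $Z$, so that the subcomplex generated by $Z$-supported classes is literally the image of the injection $i_*$. Establishing this cofinality/colimit-comparison carefully, and verifying that the identification is compatible with the tame-symbol differential on both sides, is where the real work lies; once it is in place, exactness at $\L(X,m)$ reduces to the statement that a class restricting to zero on $U$ is represented, in the colimit, by a $Z$-supported representative, which is exactly the complementary half of the same colimit decomposition.
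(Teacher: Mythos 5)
Your proposal is correct and follows essentially the same route as the paper: the paper proves injectivity of $i_*$ by the explicit retraction $\theta$ sending $[Y,a,f]$ to $[Y,a,f']$ when $\overline{f(Y)}\subset Z$ and to $0$ otherwise (which is exactly your observation that the defining relations preserve the support dichotomy), and proves exactness at $\L(X,m)$ together with surjectivity of $j^*$ by constructing, via Nagata compactification, an explicit inverse to the induced map $\L(X,m)/\im(i_*)\to\L(U,m)$ --- precisely your ``closure of $Y_U$ in a proper model, same function field, hence $\Lambda^n(Y)\cong\Lambda^n(Y_U)$'' argument packaged as two maps rather than as a colimit splitting. The only stray remark is your appeal to an alteration making $a$ strictly regular in the middle-exactness step, which is irrelevant there (strict regularity only enters the definition of the differential, not of $j^*$).
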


\begin{remark}
    The corresponding statement for Bloch's higher Chow group was proved in \cite{bloch_1994_moving_lemmas}. It is surprising that in our case the sequence is exact on level of complexes.
\end{remark}

\begin{lemma}
    The map $i_*$ is injective (as a map of complexes).
\end{lemma}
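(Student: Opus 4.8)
The plan is to exhibit an explicit left inverse of $i_*$ on each graded piece, which immediately gives injectivity as a map of complexes. Recall that $\L(X,m)_j=\colim_{Y\in{\mc Var}_{X,p}}\Lambda^n(Y)$, so concretely it is the quotient of $\bigoplus_Y\Lambda^n(Y)$ by $\Q$-linearity in $a$ together with the relations $[Y_1,\ph^*(a),f\circ\ph]=(\deg\ph)[Y_2,a,f]$ attached to proper dominant morphisms $\ph\cl Y_1\to Y_2$. The map $i_*$ is induced by sending a proper variety $f\cl Y\to Z$ to $i\circ f\cl Y\to X$; under the dictionary $(m,j)\leftrightarrow(n,p)$ a direct computation shows that this carries $\L(Z,m-k)_{j'}$ into $\L(X,m)_{j'+2k}$ (the dimension $p'=\dim Z+(m-k)-j'$ and the exterior power $n'=2(m-k)-j'$ are unchanged, and $\dim X+m-(j'+2k)=p'$, $2m-(j'+2k)=n'$), which is precisely the matching demanded by the shift $[-2k]$.

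First I would define a retraction $r\cl \L(X,m)_{j'+2k}\to\L(Z,m-k)_{j'}$ degree by degree. On a generator $[Y,a,f]$ with $f\cl Y\to X$ proper, set $r([Y,a,f])=[Y,a,f']$ whenever $f(Y)\subseteq Z$, where $f'\cl Y\to Z$ is the corestriction of $f$; this exists because $Y$ is reduced and $Z$ carries the reduced structure, and it is proper by the cancellation property, since $i$ is a closed immersion (hence separated) and $i\circ f'=f$ is proper. When $f(Y)\nsubseteq Z$, set $r([Y,a,f])=0$. Extending $\Q$-linearly in $a$ yields a well-defined linear map on $\bigoplus_Y\Lambda^n(Y)$, and what remains is to check that it kills the colimit relations so that it descends.

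The only step requiring attention is the verification that $r$ respects the relation $[Y_1,\ph^*(a),f\circ\ph]=(\deg\ph)[Y_2,a,f]$ for a proper dominant, hence surjective, morphism $\ph\cl Y_1\to Y_2$. Surjectivity gives $(f\circ\ph)(Y_1)=f(Y_2)$, so both sides have image inside $Z$ or both have image outside $Z$. In the second case $r$ annihilates both. In the first case the corestrictions satisfy $(f\circ\ph)'=f'\circ\ph$, and the image of the relation under $r$ is exactly the same relation read inside $\L(Z,m-k)$, so the two sides agree there. Hence $r$ descends to $\L(X,m)_{j'+2k}$. Finally, for any proper $f\cl Y\to Z$ we have $(i\circ f)(Y)\subseteq Z$ and the corestriction of $i\circ f$ is $f$ itself, so $r\circ i_*=\id$. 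This forces $i_*$ to be injective in every degree, i.e. injective as a map of complexes. I do not expect a genuine obstacle here: the image in $X$ is insensitive to passing to an alteration, so well-definedness of $r$ reduces to the analogous relation on $Z$. The subtle content of the localization theorem lies elsewhere, in the surjectivity of $j^*$ and in exactness in the middle.
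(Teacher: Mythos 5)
Your proof is correct and is essentially identical to the paper's: both define the same retraction sending $[Y,a,f]$ to $[Y,a,f']$ when $f(Y)\subseteq Z$ and to zero otherwise, and conclude from $r\circ i_*=\id$. You simply spell out the well-definedness check and the degree bookkeeping that the paper leaves as "easy to see."
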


\begin{proof}
    Define a map $\theta\colon \L(X, m)\to \L(Z, m-k)[-2k]$ as follows. Let $\xi =[Y, a, f]\in \L(X, m)$. If the closure of $f(Y)$ is not contained in $Z$ then $\theta(\xi)=0$. Otherwise $f$ can be factored as $f=i\circ f'$ and define $\theta(\xi)= [Y, a, f']$. It is easy to see that this map is well-defined. We have $\theta\circ i_*=id$ and so $i_*$ is injective.
\end{proof}

\begin{proof}[The proof of Theorem \ref{th:localization}]
\begin{enumerate}
\item It follows from the previous lemma that we need to show that the following map is an isomorphism:
$$\tau\colon \L(X,m)/(\im i_*)\to \L(U,m).$$

\item Define an inverse map $\varphi\colon \L(U,m)\to \L(X,m)/(Im(i_*)))$. Let $[Y, a, f]\in\L(U,m)$. By Nagata's compactification theorem \cite{nagata1963generalization} the map $j\circ f$ can be factored as $\wt f\circ \wt j$, where $\wt j\colon Y\to \wt Y$ is an open embedding and $\wt f\colon \wt Y\to  X$ is proper. 
\begin{equation*}
    \begin{tikzcd}
        Y \arrow[r, hook, "\wt j"]\ar[d, "f"]& \wt Y\ar[d, "\wt f"]\\
        U \arrow[r, hook, "j"]& X
    \end{tikzcd}
\end{equation*}

Define $\varphi([Y, a, f])=[\wt Y, (\wt j^*)^{-1}(a), \wt f]$. 

    \item We need to show that $\varphi$ does not depend on the choice of a compactification. Let $(\wt Y_1, \wt j_1, \wt f_1), (\wt Y_2, \wt j_2, \wt f_2)$ be two compactifications such that there is a morphism over $X, \phi\colon \wt Y_2\to \wt Y_1$ such that $\phi\circ \wt j_2=\wt j_1$ and $\wt f_2=\wt f_1\circ\phi$. Let us show that $$[\wt Y_1, (\wt j_1^*)^{-1}(a), \wt f_1]=[\wt Y_2, (\wt j_2^*)^{-1}(a), \wt f_2].$$
Indeed, we have
    $$[\wt Y_1, (\wt j_1^*)^{-1}(a), \wt f_1]=[\wt Y_2, \phi^*((\wt j_1^*)^{-1}(a)), \wt f_1\circ\phi]=[\wt Y_2, (\wt j_2^*)^{-1}(a), \wt f_2].$$
The check that the relations are satisfied is an easy exercise.

    \item Let us prove that $\varphi\circ\tau=id$. Let $c=[Y,a, f]\in \L(X, m)$. Let $Y_U=Y\times_X U$, and $f_j\colon Y_U\to U, j_f\colon Y_U\to Y$ be the corresponding projections. 
    \begin{equation*}
        \begin{tikzcd}
            Y_U\ar[r, "j_f"]\ar[d, "f_j"] & Y\ar[d, "f"]\\
            U\ar[r,"j"] & X
        \end{tikzcd}
    \end{equation*}
    
    We have $\tau(c)=[Y_U, j_f^*(a), f_j]$. We have  $j\circ f_j = f\circ j_f$. So in this case we can take $\wt{Y}=Y, \wt j=j_f, \wt f=f$. So we get $\ph(\tau(c))=\ph([Y_U, j_f^*(a), f_j])=[Y, (j_f^*)^{-1}(j_f^*(a)), f])=[Y, a, f]$.

    \item It remains to show that $\tau\circ\varphi=id$. Let $$c=[Y,a, f]\in \L(U,m).$$ We have $$\ph(c)=[\wt Y, (\wt j^*)^{-1}(a),\wt f].$$ Let $\wt Y_U=\wt Y\times_{X}U$. Let $r\colon Y\to \wt Y_U, \wt f_U\colon \wt Y_U\to U$ be the natural maps. We get $$\tau(\ph(q))=[\wt Y_U, (r^*)^{-1}(a), \wt f_U].$$
    
    It is not difficult to show that $r$ is an isomorphism. So we get:
$$[\wt Y_U, (r^*)^{-1}(a), \wt f_U]=[Y,a,f].$$
    
\end{enumerate}

\end{proof}

\subsection{The complex $\Lambda(X,m)$ and Bloch's higher Chow group.}
We recall that we have given the definition of cubical higher Chow group in Section \ref{sub:sec:preliminary_def}.

Define a morphism of complexes $\mc W(m)\colon CH(m)\to\Lambda (m)$ as follows. Let $Z\in  z_p(X, n)$ be an irreducible cycle. Denote by $\wt Z$ the closure of this cycle in $X\t(\P^1)^n$. Define
$$\mc W(m)(Z)=[\wt Z,  \wt t_1\wedge\dots\wedge \wt t_n, f_Z].$$
In this formula $f_Z$ it the natural projection $\wt Z\to X$ and $\wt t_i$ are the restrictions of the standard coordinates on $(\P^1)^n$ to $\wt Z$. Here is the main result of this subsection:

\begin{theorem}
\label{th:W(m)_is_morphism}
    The map $\mc W(m)$ is a morphism of complexes.
\end{theorem}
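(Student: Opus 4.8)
The plan is to verify that $\mc W(m)$ commutes with the differentials on each side. Since both complexes are generated by irreducible cycles (resp. triples), it suffices to check, for an irreducible cycle $Z\in z_p(X,n)$ with closure $\wt Z\subset X\t(\P^1)^n$, that
$$\mc W(m)(\partial Z)=d\bigl(\mc W(m)(Z)\bigr)=d\bigl([\wt Z,\wt t_1\wdw \wt t_n,f_Z]\bigr).$$
On the higher Chow side, $\partial Z$ is the signed sum of the intersections of $Z$ with the codimension-one faces $\{t_i=0\}$ and $\{t_i=\infty\}$, each of which is an admissible cycle (because $Z$ meets all faces properly); applying $\mc W(m)$ to each such face produces a triple whose distinguished element is the restriction of $\wt t_1\wdw\widehat{\wt t_i}\wdw\wt t_n$ to that face. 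On the $\Lambda$ side, $d$ is computed by choosing a smooth alteration $\ph\colon \wt Y\to\wt Z$ making $\ph^*(\wt t_1\wdw\wt t_n)$ strictly regular and summing the tame symbols $\ts_D$ over all divisors $D\subset\wt Y$. The goal is to match these two expressions term by term.

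First I would compute $\ts_D(\wt t_1\wdw\wt t_n)$ for the relevant divisors. The key input is Lemma \ref{lemma:leibniz_rule_tame_symbol}: if a divisor $D$ lies over a face $\{t_i=0\}$ (so $\ord_D(\ph^*\wt t_i)\ne 0$ while the other coordinates are units along $D$), the tame symbol reduces to $\ord_D(\ph^*\wt t_i)$ wedged with the residues of the remaining coordinates, which is exactly the datum attached by $\mc W(m)$ to the face $t_i=0$. I would use Lemma \ref{lemma:functoriality_of_residue_2} and Lemma \ref{lemma:degree_is_a_sum_multiplicties} to push these contributions down from $\wt Y$ to $\wt Z$, exactly as in the proof of Proposition \ref{prop:d_is_well_defined}, so that the sum over divisors of $\wt Y$ lying over a given face collapses (after dividing by $\deg\ph$) to the single term coming from the closure of that face in $X\t(\P^1)^n$. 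The signs should match the convention stated after the definition of $CH(X,m)$ (intersection with $t_1=0$ has positive sign), with the alternating signs produced by moving $\ts$ past the preceding wedge factors.

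The main obstacle will be controlling the divisors $D\subset\wt Y$ whose image under $\ph$ is \emph{not} the closure of a face, i.e. divisors lying over loci where $\wt Z$ meets the faces improperly, or over the boundary where some $\wt t_i$ takes the value $1$ (the locus removed when passing from $\P^1$ to $\P^1\bs\{1\}=\square$). These are precisely the "extra" contributions that distinguish $\Lambda$ from the naive higher Chow differential, and I expect them to vanish. The vanishing of the contributions of divisors contracted by $\ph$ is handled by Theorem \ref{th:differential_strictly_regular_elements}; the vanishing along the locus $\wt t_i=1$ follows as in Corollary \ref{cor:differential_exceptional_divisor_zero_Bloch}, since there $\res{\ph^*\wt t_i}{D}=1$ makes the residue zero. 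The properness of intersection of $Z$ with all faces (the admissibility built into $z_p(X,n)$) is what guarantees the codimension hypotheses of Lemma \ref{lemma:differential_normal_crossing_zero} and Corollary \ref{cor:differential_exceptional_divisor_zero_Bloch} are met, so that all improper-intersection divisors are either contracted or map into the excluded boundary. Finally I would check that $\mc W(m)$ respects the degeneracy relations: a degenerate cycle in $D_p(X,n)$ is pulled back along a projection $\pi_{i,n}$ and hence maps to an element of the form treated in Lemma \ref{lemma:degenerate_cycles} (or Lemma \ref{lemma:about_degenerate_cycles}), which is zero in $\Lambda(X,m)$, so $\mc W(m)$ descends to the quotient $z_p(X,n)$.
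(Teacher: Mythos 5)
Your proposal follows essentially the same route as the paper: split the divisors of the alteration into those contracted by $\ph$ (killed via Corollary \ref{cor:differential_exceptional_divisor_zero_Bloch}, whose codimension hypotheses hold because $Z$ meets all faces properly) and those dominating a divisor of $\wt Z$, compute the tame symbols with Lemma \ref{lemma:leibniz_rule_tame_symbol}, and collapse the sum over preimages of a face using the pushforward/multiplicity identities. The only slip is invoking Theorem \ref{th:differential_strictly_regular_elements} for the contracted divisors --- its hypotheses ($\wt Z$ smooth, the coordinate element strictly regular on $\wt Z$) need not hold here, and the correct tool is Corollary \ref{cor:differential_exceptional_divisor_zero_Bloch}, which you also cite, so the argument goes through as in the paper.
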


\begin{proof}
    Denote by $\pi$ the natural projection $\wt Z \to X$. Denote by $\wt t_i\in\k(\wt Z)$ the restriction of the coordinate function $t_i$ to $\wt Z$. Let $a=\wt t_1\wdw \wt t_n$ and $a_i = \wt t_1\wdw \wt t_{i-1}\wedge \wt t_{i+1}\wdw \wt t_n$. Let  $\ph\colon T\to \wt Z$ be an alteration with smooth $T$ such that the element $\wt a:=\ph^*(a)$ is strictly regular. Let $f=\pi\circ \ph$. We have
    $$d(\mc W(Z))=\dfrac 1{\deg\ph}\sum\limits_{D\subset T}\ts_D(\wt a, f)=\dfrac 1{\deg\ph}\sum\limits_{D\in \Div(\ph)_0}\ts_D(\wt a, f)+\dfrac 1{\deg\ph}\sum\limits_{D\in \Div(\ph)}\ts_D(\wt a, f).$$

    In this formula $Div(\ph)_0$ is the set of divisors contracted under $\ph$ and $Div(\ph)$ is the set of  divisors which are not contracted under $\ph$. By Corollary \ref{cor:differential_exceptional_divisor_zero_Bloch}, the first item is equal to zero. For any divisor $D\subset \wt Z$, the set of divisors $D'\in T$ such that $\ph(D')=D$ is denoted by $Div(\ph, D)$. We get
    $$\dfrac 1{\deg\ph}\sum\limits_{D\in \Div(\ph)}\ts_D(\wt a, f)=\dfrac 1{\deg\ph}\sum\limits_{D\subset \wt Z}\sum\limits_{D'\in \Div(\ph, D)}\ts_{D'}(\wt a, f).$$

    On the other hand:
    $$\mc W(d(Z))=\sum\limits_{i=1}^n\sum\limits_{\substack{D\subset \wt Z\\\ord_D(\wt t_i)\ne 0}}(-1)^{i+1}\ord_{D}(\wt t_i)[D,i_D^*(a_i), \pi \circ  i_D].$$
    In this formula $i_D$ is the embedding of $D$ into $\wt Z$. As $Z$ intersects all the faces properly, for any divisor $D\subset \wt Z$ there is at most one $i$, such that $\ord_{D}(\wt t_i)\ne 0$. Consider the following two cases:
    \begin{enumerate}
        \item For any $i$ we have $\ord_{D}(\wt t_i)=0$. In this case for any $D'\in\Div(\ph, D)$ we have $\ts_{D'}(\wt a, f)=0$.
        \item There is some $i$ such that $\ord_{D}(\wt t_i)\ne 0$. It remains to check the following formula:
            $$(-1)^{i+1}\ord_{D}(\wt t_i)[D,i_D^*(a_i), \pi\circ i_D]=\dfrac 1{\deg\ph}\sum\limits_{D'\in Div(\ph, D)}\ts_{D'}(\wt a, f).$$
    Denote by $g_{D'}$ the natural map $D'\to \wt Z$.
    As $\ord_{D'}(\ph^*(\wt t_j))=0$ for any $j\ne i$, it follows from the definition of tame symbol that
    \begin{align*}
        \ts_{D'}(\wt a, f)=(-1)^{i+1}\ord_{D'}(\ph^*(\wt t_i))[D', g_{D'}^*(a_i), \pi\circ g_{D'}]=\\
        (-1)^{i+1}\ord_{D'}(\ph^*(\wt t_i))f(D',D)[D,i_D^*(a_i), \pi\circ i_D].
    \end{align*}
    It remains to show that
    $$\ord_{D}(\wt t_i)=\dfrac 1{\deg\ph}\sum\limits_{D'\in Div(\ph, D)}f(D',D)\ord_{D'}(\ph^*(\wt t_i)).$$
    This follows from Lemma \ref{lemma:degree_of_morphism}.
    \end{enumerate}
\end{proof}

\section{The generators in the case $X=\spec \k$}
\begin{theorem}
\label{th:Lambda_is_generated_by_rationally_connected}
    Let $\k$ be any field. The group $\L(\k, m)/\im(d)$ is generated by elements of the following form
    $$[X\t\P^1, w], w\in \L^n(X\t\P^1).$$
\end{theorem}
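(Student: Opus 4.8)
My plan is to first reduce an arbitrary generator to one supported on a subvariety of a torus, and then to kill it modulo $\im d$ by a toric (initial) degeneration; the conceptual point is that the boundary of such a degeneration is supported on rational varieties, which are automatically of the required form. By linearity it suffices to treat a decomposable generator $[Y,\alpha_1\wdw\alpha_n]$. First I would resolve the rational map $g\colon Y\ard(\P^1)^n$ defined by the $\alpha_i$ and set $W=\ol{\im g}$, with coordinate functions $t_1,\dots,t_n$. If $\dim W<\dim Y$ then $g$ has positive relative transcendence degree and $[Y,g^*(t_1\wdw t_n)]=0$ by Lemma \ref{lemma:degenerate_cycles}; otherwise $g$ is generically finite and $[Y,\alpha_1\wdw\alpha_n]=(\deg g)[W,t_1\wdw t_n]$. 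Thus, up to a scalar, every generator has the form $[W,t_1\wdw t_n]$ with $W\subseteq(\P^1)^n$ of dimension $p$. I also record the basic remark that $[Y,a]$ is already of the desired form $[X\t\P^1,w]$ as soon as $Y$ is birationally ruled, i.e. $\k(Y)\cong\k(V)(u)$ for some $V$: one then has a degree-one correspondence with $V\t\P^1$, and the defining relations give the equality. In particular every rational $W$ of positive dimension is a good generator.

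The key step is a toric degeneration. Since each $t_i\in\k(W)^\t$, the variety $W$ meets the open torus $\mb G_m^n\subseteq(\P^1)^n$ densely. I would choose a one-parameter subgroup $\lambda(s)=(s^{w_1},\dots,s^{w_n})$ with $w=(w_i)\in\Z^n$ generic, and let $\mc W\subseteq(\P^1)^n\t\P^1_s$ be the closure of $\{(\lambda(s)\cdot x,s):x\in W,\ s\in\mb G_m\}$. It is projective of dimension $p+1$, flat over $\P^1_s$, with fibre $W$ over $s=1$. The plan is to compute $d[\mc W,\tilde a]$ for $\tilde a=t_1\wdw t_n\wedge(s-1)\in\L^{n+1}(\mc W)$. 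On the dense orbit $\{s\in\mb G_m\}$ one has $t_i=s^{w_i}x_i\in\mb G_m$, so the polar locus of every $t_i$ is contained in $\mc W_0\cup\mc W_\infty$; combined with $|s-1|=\mc W_1+\mc W_\infty$ this forces the only divisors of $\mc W$ with nonzero tame symbol to be the three fibres $\mc W_0,\ \mc W_1=W,\ \mc W_\infty$. By the Leibniz rule (Lemma \ref{lemma:leibniz_rule_tame_symbol}) the residue along $W$ is $\pm a$, so that
$$d[\mc W,\tilde a]=\pm[W,a]+[\mc W_0,\ts_{\mc W_0}\tilde a]+[\mc W_\infty,\ts_{\mc W_\infty}\tilde a].$$

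It remains to see that the two boundary terms are good. The fibres $\mc W_0$ and $\mc W_\infty$ are the initial degenerations $\mathrm{in}_{\pm w}(W)$, and for $w$ in the relative interior of a maximal cone of the tropicalisation $\mathrm{Trop}(W)$ each is, up to $\Q$-multiplicities, a torus translate of the toric variety attached to that cone, hence a rational variety of dimension $p$. By the remark of the first paragraph such classes are good generators, whence $[W,a]\equiv(\text{good})\pmod{\im d}$; together with the reduction of the first paragraph this would prove the theorem for all $p\ge 1$.

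The hard part will be the residue bookkeeping on the singular total space together with the field-of-definition issue. Since $\mc W$ is singular and $\tilde a$ need not be strictly regular, $d[\mc W,\tilde a]$ must be computed on an alteration; here the vanishing of the contributions of contracted divisors (Theorem \ref{th:differential_strictly_regular_elements}), the functoriality of the tame symbol (Lemmas \ref{lemma:functoriality_of_residue_2} and \ref{lemma:functoriality_of_residue}), and the independence of $d$ of the chosen alteration (Proposition \ref{prop:d_is_well_defined}) are exactly what guarantee that the assembled differential is still the sum over the three fibres and that the non-$W$ terms stay supported on the rational varieties $\mathrm{in}_{\pm w}(W)$, even though their preimages upstairs may fail to be rational. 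Moreover the components of $\mc W_0,\mc W_\infty$ are only \emph{geometrically} rational over $\k$, so to conclude I would pass to a finite extension splitting the relevant torus, where they become genuinely rational hence ruled, and descend by the transfer (Corollary \ref{cor:Galois_descent}). The genuine geometric heart of the argument is the rationality of the degenerate fibres forced by the genericity of $w$ and the toric structure of initial degenerations; this is precisely the manoeuvre unavailable for the classical cubical higher Chow complex, where one cannot freely pass to alterations, which is why the analogous generation statement is open there even for $j=m-1$.
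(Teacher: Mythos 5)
The reduction in your first paragraph (to classes $[W,t_1\wdw t_n]$ with $W\subseteq(\P^1)^n$ of dimension $p$, plus the observation that ruled varieties give good generators) is correct. The toric degeneration, however, produces a vacuous identity, and the gap is structural rather than bookkeeping. The total space $\mc W$ is \emph{birational} to $W\t\P^1$ via $(x,s)\mapsto(\lambda(s)\cdot x,s)$, and $d[\mc W,\wt a]$ must be computed on an alteration $T$ which we may take to dominate $W\t\P^1$ as well. There the pullback of $t_i$ is $s^{w_i}x_i$ with $x_i=t_i|_W$, so besides the fibre $W\t\{1\}$ (residue $(-1)^n a$) and the fibre $W\t\{0\}$ (residue zero, because $\overline{s-1}=-1$ is torsion and dies in $\k^\t\otimes\Q$), the divisor $W\t\{\infty\}$ also carries a nonzero symbol: writing $s^{w_i}x_i=(s-1)^{w_i}u_i$ with $u_i$ a unit of residue $x_i$, Lemma \ref{lemma:leibniz_rule_tame_symbol} gives $\ts_{W\t\{\infty\}}(\wt a)=-(-1)^n a$. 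The strict transform of $W\t\{\infty\}$ in $T$ is contracted by $T\to\mc W$ (for generic $w$ it maps to a single torus-fixed point of $(\P^1)^n$), but its contribution does \emph{not} vanish: at that fixed point all $n+1>p+1$ of the divisors $\{t_i=0,\infty\},\{s=\infty\}$ meet, so the codimension hypotheses of Lemma \ref{lemma:differential_normal_crossing_zero} and Theorem \ref{th:differential_strictly_regular_elements} fail, and the term $-(-1)^n[W,a]$ survives and cancels the term from $s=1$. (You also omit the horizontal divisors, the closures of the families $\lambda(s)\cdot Z$ for $Z$ a component of $W\cap\{t_i=0,\infty\}$; these have nonzero residue, though they are ruled, so that omission alone would be harmless.) What remains is a relation among initial degenerations and ruled classes carrying no information about $[W,a]$: since the complex only sees birational data and your total space is birationally $W\t\P^1$, the degeneration is invisible to it.

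The paper's proof avoids this by a genuinely different mechanism: induction on the minimal degree $s$ of a rational map $Y\to(\P^1)^p$. A degree-$s$ generator is realised as a hypersurface $Y'\subset(\P^1)^{p+1}$ cut out by a polynomial $P$ monic of degree $s$ in $x_1$, the functions $\alpha_j$ are extended to $Q_j$ of degree $<s$ in $x_1$, and one computes $d[(\P^1)^{p+1},P\wedge Q_1\wdw Q_n]$: the residue along $Y'$ is $a$ (since $P$ vanishes to order one there), the remaining non-exceptional residues live on divisors of degree $<s$ over $(\P^1)^p$ and are handled by induction, and the exceptional divisors of a resolution by smooth blow-ups are projective bundles, hence ruled. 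Here $Y'$ is a divisor in a strictly larger rational ambient variety, so no second copy of $[Y',a]$ appears to cancel the term one wants; any repair of your argument would have to realise $W$ as a divisor in this way rather than fibre a birationally trivial family over $\P^1_s$.
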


The proof of this theorem is the only place where we are using the Hironaka theorem on resolution of singularities \cite{hironaka_1}.

\begin{proof}
    Denote by $A$ a subgroup of $\L(\k, m)$ generated by elements of the form $[X\t\P^1, w]$. Define an increasing filtration $\mc G_*$ on $\L(\k,m)_j/\im(d)$ as follows. The vector space $\mc G_s(\L(\k,m)_j/\im(d))$ is generated by the elements $[Y,a]$ such that there is a rational map $Y\to(\P^1)^p$ of degree $\leq s$. We will prove by induction on $s$ that $\mc G_s$ coincides with $A$. The base $s=1$ is clear. 
    
    Let us prove the inductive step. We can assume that there is a rational map $f\colon Y\to (\P^1)^p$ of degree $s$ . This means that the variety $Y$ is birational to a hypersurface $Y'$ in $(\mb P^1)^{p+1}$ given by an equation of the form
    $$P(x_1,\dots, x_{p+1}): = x_1^s+\sum\limits_{i=0}^{s-1}x_1^iP_i(x_2,\dots, x_{p+1})=0.$$
    In this formula $P_i$ are some elements in $\k(x_2,\dots, x_{p+1})$. We need to show that for any $\alpha_1,\dots, \alpha_n\in \k(Y')$, the element $a:=[Y', \alpha_1\wdw \alpha_n]$ lies in $A$.

    There are rational functions $Q_j, 1\leq j\leq n$ of the form 
    
    $$Q_j(x_1,\dots, x_{p+1})=\sum\limits_{i=0}^{s-1}x_1^iQ_{j,i}(x_2,\dots x_{p+1}),$$
    such that the restrictions of $Q_j$ to $Y'$ coincide with $\alpha_j$. Here $Q_{j,i}\in \k(x_2,\dots, x_{p+1})$.
    
    Consider the element
    $$b=[(\mb P^1)^{p+1}, P\wedge Q_1\wdw Q_n].$$ Denote the element $P\wedge Q_1\wdw Q_n$ by $\lambda$. Choose some proper birational morphism $\ph\colon S\to (\mb P^1)^{p+1}$ given as composition of blow-ups in smooth centers such that the element $\ph^*(\lambda)$ is strictly regular. Let us compute $d([S, \ph^*(\lambda)])$. 

    We have
    \begin{align*}
        &d([S, \ph^*(\lambda)])=\\
        &=a+\sum\limits_{\substack{D\subset (\mb P^1)^{p+1}\\D\ne Y'}}[D,\ts_D(\lambda)]+\sum\limits_{\substack{D\subset S\\\dim(\ph(D))<\dim D}}[D,\ts_D(\ph^*(\lambda))].
    \end{align*}
    
    By the inductive assumption all the terms from the first sum lie in $A$. The terms from the second sum lie in $A$ since the exceptional divisor of any blow-up along a smooth center is always birational to $S'\t \P^1$ for some $S'$.
\end{proof}

\section{The complex $\Lambda(X,m)$ and polylogarithmic complex.}
The definition of the complex $\Gamma(F, m)$ can be found in \cite{goncharov1994polylogarithms}. This complex looks as follows:
$$\Gamma(F,m)\colon \mathcal B_m(F)\xrightarrow{\delta_m} \mathcal B_{m-1}(F)\otimes F^\times\xrightarrow{\delta_m}\dots\xrightarrow{\delta_m}\mathcal B_2(F)\otimes \Lambda^{m-2}F^\times\xrightarrow{\delta_m}\Lambda^m F^\times.$$

This complex is concentrated in degrees $[1,m]$. The group $\mathcal B_m(F)$ is the quotient of the free abelian group generated by symbols $\{x\}_m, x\in \mathbb P^1(F)$ by some explicitly defined subgroup $\mathcal R_m(F)$ (see \cite{goncharov1994polylogarithms}).   The differential is defined as follows: $\delta_m(\{x\}_k\otimes x_{k+1}\wedge \dots \wedge x_m)=\{x\}_{k-1}\otimes x\wedge x_{k+1}\wedge \dots \wedge x_m$ for $k>2$ and $\delta_m(\{x\}_2\otimes x_3\wdw x_m)=x\wedge (1-x)\wedge x_3\wdw x_m$.

Everywhere in this paper we can replace the complex $\Gamma(F,m)$ with its canonical truncation $\tau_{\geq m-1}\Gamma(F,m)$. Therefore, only the definition of the group $\mathcal R_2(F)$ is relevant for us. As it was noted in Section 4.2 of \cite{goncharov1994polylogarithms} this group  is generated by the following elements:

$$\sum\limits_{i=1}^5(-1)^i\{c.r.(x_1,\dots, \widehat x_i,\dots, x_5)\}_2, \{0\}_2, \{1\}_2, \{\infty\}_2.$$
  In this formula $x_i$ are five different points on $\mathbb P^1$ and $c.r.(\cdot)$ is the cross ratio.

Let $\k$ be an arbitrary field. We recall that in the case $X=\spec \k$, we denote the element $[Y, a, f]\in \L(X, m)$ simply by $[Y,a]$.  Define a morphism of complexes $$\mc T_{\ge m-1}(m)\colon \tau_{\ge m-1}\Gamma(\k,m)\to \tau_{\ge m-1}\Lambda(\k, m)$$ as follows. The element $\{a\}_2\wedge c_3\wdw c_m$ goes to
$$[\P^1, t\wedge (1-t)\wedge (1-a/t)\wedge c_3\wdw c_m].$$
The element $c_1\wdw c_m$ goes to
$[\spec \k, c_1\wdw c_m]$. We call the map $\mc T_{\geq m-1}(m)$ by \emph{Totaro map}.

\begin{theorem}
    \label{th:Tatoro_is_morphism_of_complexes}
    Let $F$ be an arbitrary field of characteristic zero. The map $\mc T_{\ge m-1}(m)$  is a quasi-isomorphism. In particular for $j=m-1, m$ we have $$H^j(\Lambda(\spec F, m))\cong H^j(\Gamma(F, m)).$$
     Moreover, when $\k$ is algebraically closed the map $\mc T_{\ge m-1}(m)$ is an isomorphism of complexes.
\end{theorem}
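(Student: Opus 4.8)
The plan is to prove Theorem \ref{th:Tatoro_is_morphism_of_complexes} by first establishing that $\mc T_{\ge m-1}(m)$ is a morphism of complexes, and then analyzing it degree by degree in the two-term truncated complex. Since $\tau_{\ge m-1}\Gamma(\k,m)$ lives in degrees $m-1$ and $m$, with terms $\mc B_2(\k)\otimes\Lambda^{m-2}\k^\times$ and $\Lambda^m\k^\times$ respectively, I would first verify compatibility with the differentials: one must check that for a generator $\{a\}_2\wedge c_3\wdw c_m$, the image $[\P^1, t\wedge(1-t)\wedge(1-a/t)\wedge c_3\wdw c_m]$ has differential equal to $[\spec\k, a\wedge(1-a)\wedge c_3\wdw c_m]$, which is exactly $\mc T(\delta_m(\{a\}_2\wedge c_3\wdw c_m))$. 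This reduces to a residue computation on $\P^1$: the tame symbols of $t\wedge(1-t)\wedge(1-a/t)$ at the points $t=0,1,\infty,a$ must be tallied, and the only surviving contribution should land at $t=a$ giving $a\wedge(1-a)$ after a sign check. The well-definedness on $\mc B_2(\k)$ requires that the five-term relation and the degenerate symbols $\{0\}_2,\{1\}_2,\{\infty\}_2$ map into coboundaries or to zero; this is presumably where Theorem \ref{th:Lambda_is_generated_by_rationally_connected} and the five-term identity on the motivic/polylogarithmic side enter.

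Next I would prove the quasi-isomorphism statement by comparing cohomology in degrees $m-1$ and $m$. In top degree $m$, both complexes have $\Lambda^m\k^\times$ as their last term and the map is the identity on this term via $c_1\wdw c_m\mapsto[\spec\k, c_1\wdw c_m]$, so I would show $H^m$ agrees by identifying the images of the respective differentials. The essential input is computing $H^m(\Lambda(\spec\k,m))$: every class in $\Lambda(\spec\k,m)_m=\Lambda^m\k^\times$ is represented by $[\spec\k, c_1\wdw c_m]$ after using the relations, and the subgroup of coboundaries $\im(d)$ coming from $\Lambda(\k,m)_{m-1}$ must be shown to coincide with $\im(\delta_m)=$ the image of $\mc B_2(\k)\otimes\Lambda^{m-2}\k^\times$. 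Here Theorem \ref{th:Lambda_is_generated_by_rationally_connected} is decisive: it reduces all generators of $\Lambda(\k,m)_{m-1}/\im(d)$ to the form $[X\times\P^1, w]$, and one computes the differential of such a class by taking tame symbols along the divisors, which should express everything in terms of the Goncharov differential. For $H^{m-1}$ I would use the same generation statement to match $\ker(d)/\im(d)$ with $\ker(\delta_m)/\im(\delta_m)$ on the Goncharov side.

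The main obstacle, I expect, is the surjectivity and injectivity onto $H^{m-1}$ and the identification of relations: one must show that the relations defining $\mc B_2(\k)$ (the five-term relation plus the torsion/degenerate symbols) correspond precisely to relations holding in $\Lambda(\spec\k,m)$ modulo coboundaries, and conversely that no extra relations appear. This is genuinely hard because it amounts to a comparison of the Bloch group with a cycle-theoretic invariant; the key technical tool should be the explicit generation by $[X\times\P^1,w]$ classes together with a careful bookkeeping of tame symbols showing that a class of the form $[\P^1, w]$ with $w$ strictly regular contributes only through its residues, and that these residues realize exactly Goncharov's $\delta_m$. The lifting remark for $m=2$ suggests that the five-term relation is satisfied on the nose (not merely modulo coboundary) in $\Lambda$, so for general $m$ I would isolate the weight-two factor and reduce the five-term verification to the $m=2$ case, propagating it by wedging with $c_3\wdw c_m$.

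For the final sentence — that $\mc T_{\ge m-1}(m)$ is an isomorphism of complexes when $\k$ is algebraically closed — I would argue that over an algebraically closed field every relevant variety acquires enough rational points and the generation result becomes an identification term by term rather than merely on cohomology. Concretely, I would show that over $\ol\k$ the maps on each graded piece are already bijective: the defining relations of $\Lambda(\spec\ol\k,m)_{m-1}$ collapse so that $\Lambda(\spec\ol\k,m)_{m-1}\cong\mc B_2(\ol\k)\otimes\Lambda^{m-2}\ol\k^\times$ exactly. This last isomorphism-on-the-nose claim is the most delicate point and would likely require Corollary \ref{cor:Galois_descent} (Galois descent) together with the algebraic closedness to eliminate the truncation defect, so I would treat it after the quasi-isomorphism statement is fully in hand.
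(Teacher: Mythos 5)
Your outline of the first half---checking that $\mc T_{\ge m-1}(m)$ is a morphism of complexes via the residue computation on $\P^1$ at $t=a$, and using Theorem \ref{th:Lambda_is_generated_by_rationally_connected} to control the generators of $\L(\k,m)_{m-1}/\im(d)$---matches the paper. But there are two genuine gaps in the second half. First, your degree-$m$ analysis assumes that the top term of $\Lambda(\spec\k,m)$ is $\Lambda^m\k^\times$ with $\mc T$ the identity on it; this is only true when $\k$ is algebraically closed, since $\Lambda(\k,m)_m$ is a colimit over all finite extensions $\spec L\to\spec\k$. The paper's actual first move for the quasi-isomorphism is a reduction to algebraically closed fields by Galois descent on both sides (Rudenko's result for $\Gamma$, Corollary \ref{cor:Galois_descent} for $\Lambda$), after which it proves the stronger statement that $\mc T_{\ge m-1}(m)$ is an isomorphism of complexes. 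Your proposal inverts this logic, treating the algebraically closed case as an afterthought, and as written the general-field argument does not go through.

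Second, and more seriously, injectivity in degree $m-1$ cannot be obtained by ``careful bookkeeping of tame symbols'' or by matching generators and relations: one must rule out that a Totaro element $[\P^1, t\wedge(1-t)\wedge(1-a/t)\wedge\cdots]$ becomes a coboundary in $\Lambda(\k,m)$ without $\{a\}_2\otimes\cdots$ being one in $\Gamma(\k,m)$, and no generation statement controls this. The paper's solution is to construct an explicit left inverse $\mc{CD}_{\ge m-1}(m)$ out of the lifted reciprocity maps $\mc H_X$ of Theorem \ref{th:strong_suslin_reciprocity_law_varities} (the strong Suslin reciprocity law), whose existence, functoriality under finite covers, vanishing on elements of the form $f_1\wedge f_2\wedge c_3\wdw c_{m+1}$, and compatibility with residues on surfaces occupy all of Section \ref{sec:strong_suslin_reciprocity_law}. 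The identity $\mc{CD}_{\ge m-1}(m)\circ\mc T_{\ge m-1}(m)=\id$ then gives injectivity. You correctly flag this step as ``genuinely hard,'' but the proposal contains no substitute for this construction, so the injectivity claim is unsupported. (Surjectivity, by contrast, is handled essentially as you suggest: Proposition \ref{prop:Lambda_generated_by_Totaro_cycles} reduces the generators of Theorem \ref{th:Lambda_is_generated_by_rationally_connected} to Totaro elements by an induction wedging in an auxiliary $\P^1$ factor.)
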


The rest of this section is devoted to the proof of this theorem.

\subsection{The map $\mc T_{\ge m-1}$ is well-defined}

Let $V$ be $2$-dimensional vector space over $\k$ and $l_i\in V^{*}$. Denote by $T(l_1, l_2, l_3, l_4)$ the element $[\P(V), \omega(l_1, \dots, l_4)]$, where $$\omega(l_1, l_2, l_3, l_4)=\dfrac {l_1}{l_4}\wedge \dfrac {l_2}{l_4}\wedge \dfrac {l_3}{l_4}.$$

Let $a\in \k\bs\{0\}$. Define $T_a^{2}$
$$T_a^{2} = [\P^1, (t\wedge (1-t)\wedge (1-a/t))].$$

\begin{lemma}
\label{lemma:abel_five_term_relation_goes_to_zero}
    The following statements true:
    \begin{enumerate}
        \item We have: $$T_a^2=-T_{1/a}^2=-T_{1-a}^2.$$
        \item We have
        $$T(l_1, l_2, l_3, l_4)=T_{c.r.(\pi(l_1),\pi(l_2), \pi(l_3), \pi(l_4))}^2.$$
        In this formula $\pi\colon V^*\bs \{0\}\to \P(V^*)$ is the natural projection and $c.r.(\cdot)$ is the cross-ratio.
         \item Let $l_1,\dots, l_5\in V^*$. Assume that any two of these vectors are linearly independent. Then we have
        $$\sum\limits_{i=1}^5(-1)^iT(l_1,\dots\hat{l_i},\dots l_5)=0.$$
    \end{enumerate}
\end{lemma}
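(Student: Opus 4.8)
The plan is to prove the three statements in sequence, using the defining relations of $\L(\k,m)$ (invariance under alterations and multilinearity/antisymmetry of the wedge) as the main computational tools, and deferring the hardest part (the five-term relation) to a reduction to a concrete symbol identity.

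\textbf{Part (1).} First I would establish the two symmetries of $T_a^2$. For the relation $T_a^2=-T_{1/a}^2$, I would apply the automorphism $\sigma\colon \P^1\to\P^1$ sending $t\mapsto a/t$ (an alteration of degree $1$), which pulls the coordinate $t$ back to $a/t$. Since $[\P^1,\sigma^*(w)]=[\P^1,w]$ by the degree-$1$ alteration relation, I would compute $\sigma^*(t\wedge(1-t)\wedge(1-a/t))$ and simplify using multilinearity; the constant $a$ appearing in factors like $a/t$ contributes terms that wedge with $t$ and other factors, and I expect a cancellation identity (essentially the Steinberg-type relation $x\wedge(1-x)$-compatibility and $c\wedge c=0$ for the wedge over $\Q$) to produce exactly $-T_{1/a}^2$. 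Similarly, for $T_a^2=-T_{1-a}^2$, I would use the substitution $t\mapsto 1-t$ together with multilinearity. The key identity I will lean on is that the three factors $t$, $1-t$, $1-a/t$ of $T_a^2$ are cyclically/symmetrically related, mirroring the dihedral symmetry of the cross-ratio.

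\textbf{Part (2).} Here I would reduce the homogeneous expression $\omega(l_1,l_2,l_3,l_4)=\frac{l_1}{l_4}\wedge\frac{l_2}{l_4}\wedge\frac{l_3}{l_4}$ to the inhomogeneous $T_a^2$. Choosing an affine coordinate $t$ on $\P(V)$ (i.e. a trivialization identifying $\P(V)$ with $\P^1$), each ratio $l_i/l_j$ becomes a fractional linear function of $t$, and the three ratios become, up to the $\mathrm{PGL}_2$-action, the functions $t$, $1-t$, $1-a/t$ with $a=c.r.(\pi(l_1),\dots,\pi(l_4))$. The point is that $T(l_1,l_2,l_3,l_4)$ is manifestly $\mathrm{GL}_2$-invariant and depends only on the four points $\pi(l_i)\in\P(V^*)$, so I may normalize three of them to $0,1,\infty$ using an automorphism of $\P^1$ (again a degree-$1$ alteration), after which the fourth is precisely the cross-ratio and a direct computation of $\omega$ yields $t\wedge(1-t)\wedge(1-a/t)$ modulo the multilinear relations. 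I would also need to check that rescaling the $l_i$ individually does not change $T$, which follows because replacing $l_i$ by $\mu l_i$ changes $\omega$ by terms involving $\mu\in\k^\times$ wedged against the remaining factors, and such constant contributions cancel by antisymmetry exactly as in Part (1).

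\textbf{Part (3): the five-term relation.} This is the main obstacle. I would use Part (2) to rewrite the alternating sum $\sum_{i=1}^5(-1)^i T(l_1,\dots,\hat l_i,\dots,l_5)$ entirely in terms of the homogeneous symbols $\omega(\,\cdot\,)$ on the configuration of five lines $l_1,\dots,l_5\in V^*$. The natural strategy is to exhibit this alternating sum as $\ts_D$ of a single symbol on a two-dimensional variety, or more directly to realize it as the tame symbol / boundary of an explicit element, so that it vanishes in $\L(\k,m)$ by the reciprocity built into the complex (Theorem \ref{th:Parshin_reciprocity_law}); indeed, the classical five-term relation for the dilogarithm is exactly a reciprocity statement, and the wedge-symbol formulation here is designed so that Abel's relation holds \emph{on the nose} rather than modulo coboundary. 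Concretely, I expect to build a suitable rational surface (or a configuration of lines in $\P^2$) carrying a rank-three wedge whose residues along the five boundary divisors produce the five terms $(-1)^i T(\dots\hat l_i\dots)$ with the correct signs, and whose total residue vanishes by Parshin reciprocity. The delicate points will be matching signs across the five terms and verifying that the constant/degenerate contributions vanish via Lemma \ref{lemma:about_degenerate_cycles}; I anticipate this sign bookkeeping, rather than any conceptual difficulty, to be where the real work lies.
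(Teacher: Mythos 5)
Your parts (1) and (2) follow essentially the paper's own route: degree-one automorphisms of $\P^1$, multilinearity of the wedge, and the vanishing of elements of the form $[\P^1, f\wedge g\wedge c_3\wdw c_{m+1}]$ (Lemma \ref{lemma:Beilinson_Soule_vanishing}) to dispose of the constant factors; the rescaling invariance of $T(l_1,\dots,l_4)$ is handled exactly as you say. One concrete slip: the automorphism $t\mapsto a/t$ permutes the three factors $t$, $1-t$... no — it sends $t\mapsto a/t$, $1-a/t\mapsto 1-t$, so it carries the symbol of $T_a^2$ to $(a/t)\wedge(1-a/t)\wedge(1-t)$, which after discarding the constant $a$ is $-t\wedge(1-a/t)\wedge(1-t)=+t\wedge(1-t)\wedge(1-a/t)$; i.e.\ this substitution \emph{fixes} $T_a^2$ and yields a tautology rather than $T_a^2=-T_{1/a}^2$. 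The substitution that works (and that the paper uses) is $t\mapsto 1/t$, applied to the symbol of $T_{1/a}^2$.

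The genuine gap is part (3), which you yourself defer. You propose to realize the five-term sum as the total residue of a symbol on a rational surface and invoke Parshin reciprocity (Theorem \ref{th:Parshin_reciprocity_law}), but you never construct that symbol, and you flag the sign bookkeeping as "where the real work lies" --- so the hardest third of the lemma is not proved. More importantly, this machinery is unnecessary: the identity holds on the nose in $\Lambda^3$ of the function field by pure multilinear algebra. Writing the group law additively, $\omega(l_1,l_2,l_3,l_4)=(l_1-l_4)\wedge(l_2-l_4)\wedge(l_3-l_4)$; expanding by multilinearity and killing the terms with a repeated $l_4$, one gets
$$\omega(l_1,l_2,l_3,l_4)=\sum_{j=1}^4(-1)^{j}\, l_1\wedge\dots\wedge\widehat{l_j}\wedge\dots\wedge l_4,$$
i.e.\ $\omega$ is the simplicial boundary of $l_1\wedge l_2\wedge l_3\wedge l_4$. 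Hence $\sum_{i=1}^5(-1)^iT(l_1,\dots,\hat l_i,\dots,l_5)=[\P(V),\partial^2(l_1\wedge\dots\wedge l_5)]=0$ already as an element of $\Lambda^3(\k(\P(V))^\t)$, before any residue or reciprocity enters. This is what the paper means by "direct computation," and it is the observation your proposal is missing.
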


\begin{corollary}
\label{cor:Abel_five_term_relations}
    Let $x_1,\dots, x_5$ be five different points on $\mb \P^1$. Then
    $$\sum\limits_{i=1}^5(-1)^iT^2_{c.r.(x_1,\dots,\hat{x_i},\dots, x_5)}=0.$$
\end{corollary}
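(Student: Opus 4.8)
The plan is to deduce this corollary as a direct translation of Lemma \ref{lemma:abel_five_term_relation_goes_to_zero} from vectors in $V^*$ to points of $\mathbb{P}^1$. First I would identify $\mathbb{P}^1$ with $\mathbb{P}(V^*)$ for a $2$-dimensional vector space $V$, so that each point $x_i$ corresponds to a line in $V^*$, i.e. to a nonzero vector $l_i \in V^*$ well-defined up to a scalar, with $\pi(l_i) = x_i$. Since the five points $x_1, \dots, x_5$ are pairwise distinct, the corresponding lines are distinct, so any two of the vectors $l_1, \dots, l_5$ are linearly independent. This is precisely the hypothesis needed to invoke part (3) of the lemma.

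Next I would apply part (3) of Lemma \ref{lemma:abel_five_term_relation_goes_to_zero} to these lifts, obtaining
$$\sum_{i=1}^5 (-1)^i T(l_1, \dots, \hat{l_i}, \dots, l_5) = 0.$$
To finish, I would rewrite each term using part (2): for the four indices $j_1 < j_2 < j_3 < j_4$ complementary to $i$ we have $T(l_{j_1}, l_{j_2}, l_{j_3}, l_{j_4}) = T^2_{c.r.(\pi(l_{j_1}), \pi(l_{j_2}), \pi(l_{j_3}), \pi(l_{j_4}))}$, and since $\pi(l_j) = x_j$ by construction, the cross-ratio equals $c.r.(x_{j_1}, x_{j_2}, x_{j_3}, x_{j_4}) = c.r.(x_1, \dots, \hat{x_i}, \dots, x_5)$. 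Substituting term by term turns the vanishing sum above into exactly $\sum_{i=1}^5 (-1)^i T^2_{c.r.(x_1, \dots, \hat{x_i}, \dots, x_5)} = 0$, which is the claim.

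The only point requiring a word of care is that the final expression is independent of the chosen lifts $l_i$; but this is automatic, since the cross-ratio is a projective invariant and $T^2_a$ is defined in terms of the value $a \in \mathbb{P}^1$ rather than any particular vector representative. I do not anticipate a genuine obstacle here: the substantive content — the five-term relation itself — is already packaged in part (3) of the lemma, and the corollary is merely its reformulation in terms of cross-ratios of points on $\mathbb{P}^1$.
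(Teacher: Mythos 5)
Your proposal is correct and is exactly the intended derivation: lift the five distinct points to pairwise linearly independent vectors in $V^*$, apply part (3) of Lemma \ref{lemma:abel_five_term_relation_goes_to_zero}, and convert each term to $T^2_{c.r.(\cdots)}$ via part (2). The paper leaves this translation implicit, and your handling of the independence from the choice of lifts is fine.
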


To prove this lemma, we need another lemma.

\begin{lemma}
\label{lemma:Beilinson_Soule_vanishing}
\begin{enumerate}
    \item Let $f,g\in\k(t)$. We have
        $$[\P^1, f(t)\wedge g(t)]=0.$$
        \item For any $c_3,\dots, c_{m+1}$ we have:
        $$[\P^1, f(t)\wedge g(t)\wedge c_3\wdw c_{m+1}]=0.$$
\end{enumerate}
\end{lemma}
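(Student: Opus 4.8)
The plan is to prove part (1) first and then deduce part (2) by the same degeneration argument used throughout the paper. For part (1), I want to reduce the claim that $[\P^1, f(t)\wedge g(t)]=0$ to the already-established vanishing results for degenerate cycles. The key observation is that the element $f(t)\wedge g(t)\in\L^2(\P^1)$ lives on a one-dimensional variety, so its image under the map $t\mapsto(f(t),g(t))$ lands in $\PP$, and bilinearity of the wedge together with multiplicativity of $\ord$ lets me expand everything in terms of the coordinates. Concretely, I would first use bilinearity of $\wedge$ to reduce to the case where $f$ and $g$ are each either constant or of the form $t-c$ for $c\in\k$ (after passing to a finite extension to split $f$ and $g$ into linear factors, which is harmless since $\L(\k,m)$ is a $\Q$-vector space and Corollary \ref{cor:Galois_descent} controls the behavior under finite extensions). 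If either factor is constant the element is already of the degenerate shape handled by Lemma \ref{lemma:about_degenerate_cycles} with $n=1=\dim Y-\dim R$, so it vanishes.

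The remaining case is $[\P^1,(t-c_1)\wedge(t-c_2)]$ with $c_1\ne c_2$. Here I would exhibit this as the pushforward-type element $T(l_1,l_2,l_3,l_4)$ attached to suitable linear forms, or more directly compute its differential and observe it is a boundary; but since the lemma asserts the element itself is zero (not merely a boundary), the cleaner route is to realize that $(t-c_1)\wedge(t-c_2)$ is, up to the alteration relations, a pullback of a wedge of one coordinate from a lower-dimensional base. Precisely, after the automorphism of $\P^1$ sending $c_1\mapsto 0$, $c_2\mapsto\infty$, the element becomes $t\wedge(1/t)=-t\wedge t=0$ by antisymmetry of the wedge in $\L^2$. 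This is the heart of the computation: the two linear factors, once normalized to $0$ and $\infty$, give $t\wedge t^{-1}$, which is $-t\wedge t=0$. The normalization is legitimate because $\L(\P^1,m)$ is invariant under the $\mathrm{PGL}_2$-action (any automorphism of $\P^1$ is an isomorphism of varieties inducing the identity on $\L(\spec\k,m)$ via the alteration relation with $\deg=1$).

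For part (2), I would run the identical reduction, carrying the extra inert wedge factors $c_3\wedge\dots\wedge c_{m+1}$ along unchanged. After reducing $f(t)\wedge g(t)$ to either a degenerate piece or to $t\wedge t^{-1}$, the factor $t\wedge(1/t)\wedge c_3\wdw c_{m+1}=-t\wedge t\wedge c_3\wdw c_{m+1}=0$ again by antisymmetry, and the degenerate pieces vanish by Lemma \ref{lemma:about_degenerate_cycles} (now with $b=c_3\wdw c_{m+1}$ pulled back from the point $\spec\k$). The main obstacle I anticipate is making the factorization step fully rigorous over a non-closed field: $f$ and $g$ need not split into linear factors over $\k$, so I must either base-change to $\ol\k$ and descend using Corollary \ref{cor:Galois_descent}, or argue directly that over $\k$ the divisor of $f(t)\wedge g(t)$ on a suitable resolution meets only proper-intersection strata, so that all boundary contributions organize into the degenerate and antisymmetric-cancellation cases. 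I expect the base-change-and-descend approach to be the cleanest, since it isolates all the genuine content in the single identity $t\wedge t^{-1}=0$.
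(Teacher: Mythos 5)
Your reduction steps are fine and essentially match the paper: passing to a finite extension to split $f$ and $g$ into linear factors (legitimate because $\P^1_L\to\P^1_\k$ is proper of finite degree), and killing every term with a constant factor via the degenerate-cycle Lemma \ref{lemma:about_degenerate_cycles}. The paper handles the constant cases slightly differently (a degree-$s$ endomorphism of $\P^1$ and the substitution $t\mapsto 1/t$), but your route through Lemma \ref{lemma:about_degenerate_cycles} is equally valid.

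However, the step you call the heart of the computation is wrong, and it is exactly the case where the real content lies. No automorphism of $\P^1$ turns $(t-c_1)\wedge(t-c_2)$ into $t\wedge(1/t)$. The divisors of $t-c_1$ and $t-c_2$ are $(c_1)-(\infty)$ and $(c_2)-(\infty)$: their supports share exactly one point, whereas the divisors of $t$ and $1/t$ are $(0)-(\infty)$ and $(\infty)-(0)$, whose supports share two points; an automorphism preserves this incidence pattern, so the two configurations are not equivalent. If you carry out the substitution you propose (sending $c_1\mapsto 0$, $c_2\mapsto\infty$), the pullbacks are of the form $\lambda t/(t-q)$ and $\mu/(t-q)$ with $q$ finite, and after expanding by bilinearity and discarding constant terms you are left with $-t\wedge(t-q)$ --- an element of the same shape you started with, so the argument is circular. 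The correct normalization fixes $\infty$ and sends $c_1\mapsto 0$, $c_2\mapsto 1$, landing on $[\P^1, t\wedge(1-t)]$; this element is \emph{not} zero in $\L^2(\k(t)^\t)$ by antisymmetry, and its vanishing in $\L(\k,m)$ requires an extra idea: pull back along the involution $t\mapsto 1-t$ (a degree-one automorphism) to get $[\P^1,t\wedge(1-t)]=[\P^1,(1-t)\wedge t]=-[\P^1,t\wedge(1-t)]$, hence the class is $2$-torsion and vanishes since we work over $\Q$. That involution argument is the actual content of the paper's proof and is missing from yours; without it (or some equivalent, e.g.\ a Weil-reciprocity/Steinberg-type argument) the key case does not close. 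The same gap propagates to part (2), where the identical issue arises with the inert factors $c_3\wdw c_{m+1}$ carried along.
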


\begin{remark}
    Using Weil reciprocity law it is not difficult to show that the element from the first item is closed. So this item is a manifestation of Beilinson-Soule vanishing $H_{mot}^0(\spec\k, \Q(1))=0$.
\end{remark}

\begin{proof}
We only prove the first item. The proof of the second item is similar. Let $L$ be a finite extension of $\k$ such that the functions $f,g$ can be decomposed into linear factors over $L$. As the morphism $\P^1_L\to \P^1_\k$ is proper and of finite degree we can assume that both $f$ and $g$ are products of linear factors. So it is enough to consider the following three cases
        \begin{enumerate}
        \item Both of the functions $f, g$ are constant. Choose some morphism $\ph \colon\P^1\to \P^1$ of degree $s>1$. We have
        $$[\P^1, f\wedge g]=1/s[\P^1, \ph^*(f\wedge g)]=1/s[\P^1, f\wedge g].$$
        So $[\P^1, f\wedge g]=0$.
        \item $f(t)=c, g(t)=t-a$. We have
            \begin{align*}
                &[\P^1, c\wedge (t-a)]=[\P^1, c\wedge t]=\\
                &[\P^1, c\wedge (1/t)]=-[\P^1, c\wedge t].
            \end{align*}
            So $[\P^1, c\wedge (t-a)]=0$.
            \item $f(t)=t-a, g(t)=t-b$. Let $\ph$ be an automorphism of $\mb P^1$ given by the formula $\ph(t)=a+(b-a)t$. We get
            \begin{align*}
                &[\P^1, (t-a)\wedge (t-b)]=[\P^1, \ph^*((t-a)\wedge (t-b))]=\\&[\P^1, (t(b-a))\wedge (t(b-a)-(b-a))].
            \end{align*}
            It follows from the previous two items that this element is equal to $[\P^1, t\wedge (1-t)]$. Denote by $\ph_2$ an automorphism of $\P^1$ given by the formula $\ph_2(t)=1-t$. We get:
            \begin{align*}
                &[\P^1, t\wedge (1-t)] = [\P^1, \ph_2^*((1-t)\wedge t)]=\\
                &[\P^1, (1-t)\wedge t]=-[\P^1, t\wedge (1-t)].
            \end{align*}
            So this element is zero.
  \end{enumerate}            
\end{proof}

\begin{proof}[The proof of Lemma \ref{lemma:abel_five_term_relation_goes_to_zero}]
    We have:
    \begin{align*}
        &T_{1/a}^2=[\P^1, t\wedge (1-t)\wedge (1-1/(at))]=\\
        &[\P^1, (1/t)\wedge (1-1/t)\wedge (1-t/a)]=-[\P^1, t\wedge (1-t)\wedge (1-a/t)]=-T_{a}^2.
    \end{align*}
    
    The proof of the other formula is similar.
    
    Let us prove the second item. It follows from the previous lemma that for any $\lambda_1,\lambda_2, \lambda_3, \lambda_4\in \k\bs 0$ we have
    $$T(l_1, l_2, l_3, l_4)=T(\lambda_1 l_1, \lambda_2 l_2,\lambda_3 l_3,\lambda_4 l_4).$$
    So we can assume that $V=\k^2$ and $l_1=(1,0), l_2=(-1, 1), l_4=(1,-a), l_4=(0,1)$ for some $a\in \k\bs \{0, 1\}$. We get:
    \begin{align*}
        &T(l_1, l_2, l_3, l_4)=[\P^1, t\wedge (1-t)\wedge (t-a)]=\\
        &[\P^1, t\wedge(1-t)\wedge (1-a/t)]=T^2_{a}.
    \end{align*}
    On the other hand
        $$c.r.(\pi(l_1), \pi(l_2), \pi(l_3), \pi(l_4))=\dfrac{a}{a-1}=\dfrac 1{1-1/a}.$$
Now the statement follows from the first item. The proof of the third item is a direct computation.
\end{proof}

\begin{lemma}
\label{lemma:differential_Totaro_elements}
We have
   \begin{enumerate}
    \item
    \begin{align*}
        &d([\P^1, t\wedge (1-t)\wedge (1-a/t)\wedge c_3\wdw c_m])=\\
        &[\spec \k, a\wedge (1-a)\wedge c_3\wdw c_m].
    \end{align*}
    \item 
    \begin{align*}
        &d([\P^2, x_1\wedge x_2\wedge (1-x_1)\wedge (1-x_2/x_1)\wedge (1-a/x_2)\wedge\\& c_3\wdw c_{m-1}])=
        [\P^1, t\wedge (1-t)\wedge (1-a/t)\wedge a\wedge  c_3\wdw c_{m-1}].
    \end{align*}
\end{enumerate}
\end{lemma}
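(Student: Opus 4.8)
The plan is to compute both differentials directly via the tame symbol, exploiting the fact that the bracketed elements are already written as wedges of coordinate functions and hence become strictly regular after a mild blow-up. For the first item, I would work on $\P^1$ with coordinate $t$ and the element $a=t\wedge(1-t)\wedge(1-a/t)\wedge c_3\wdw c_m$. The relevant divisors are the points $t=0,1,\infty,a$. By Lemma \ref{lemma:leibniz_rule_tame_symbol}, at each point only the factors vanishing or blowing up there contribute, the remaining constants $c_i$ and units surviving as residues. At $t=0$ the factor $t$ has valuation $1$ while $1-t\to 1$ and $1-a/t\to\infty$; at $t=a$ only $1-a/t$ vanishes; at $t=1$ only $1-t$ vanishes; at $t=\infty$ the function $t$ has a pole. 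The hoped-for outcome is that after summing the four tame symbols with the correct signs, every contribution involving a $c_i$ or a spurious constant cancels, and what remains is exactly $[\spec\k,\,a\wedge(1-a)\wedge c_3\wdw c_m]$. I expect a clean cancellation at $t=0$ and $t=\infty$ because the two residues there differ only by the sign flip $t\mapsto 1/t$, together with the identities proved in Lemma \ref{lemma:Beilinson_Soule_vanishing} that kill terms of the form $[\spec\k, c\wedge\dots]$ with a constant factor.

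For the second item I would proceed analogously on $\P^2$ (or rather a suitable blow-up making the displayed six-fold wedge strictly regular), computing $d$ as the sum of tame symbols over the irreducible divisors of the configuration $x_1=0$, $x_1=1$, $x_2=0$, $x_2=x_1$, $x_2=a$, together with the lines at infinity. The geometry is arranged so that the only divisor on which all but the coordinate factors survive as honest residues is the strict transform of $x_1=t$-type curve giving back the $\P^1$ with coordinate $t$; the claim is that the single surviving boundary term is $[\P^1, t\wedge(1-t)\wedge(1-a/t)\wedge a\wedge c_3\wdw c_{m-1}]$, with all other boundary divisors contributing terms that are either degenerate (and hence zero by Lemma \ref{lemma:degenerate_cycles} or Lemma \ref{lemma:about_degenerate_cycles}) or cancel in pairs. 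Here I would use the substitution identifying $x_2/x_1$ with the parameter $t$ on the relevant component so that the residues of $x_1,(1-x_1),(1-x_2/x_1),(1-a/x_2)$ assemble into the stated five-fold wedge.

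The main obstacle is bookkeeping: I must exhibit an explicit sequence of blow-ups (in smooth centers, as in the proof of Theorem \ref{th:Lambda_is_generated_by_rationally_connected}) that renders the element strictly regular, then verify that the exceptional divisors contribute nothing. For the exceptional divisors I would invoke Theorem \ref{th:differential_strictly_regular_elements} (contracted divisors give zero) together with Corollary \ref{cor:differential_exceptional_divisor_zero_Bloch}, so that only the strict transforms of the original coordinate divisors survive; this reduces the computation to the na\"ive tame-symbol calculation on the original $\P^1$ or $\P^2$. The delicate point in item (2) is correctly tracking signs and the order of the wedge factors across the several divisors so that the extraneous contributions cancel and the factor $a$ appears in the right slot; I would organize this by repeatedly applying Lemma \ref{lemma:leibniz_rule_tame_symbol} to strip off unit factors one divisor at a time, and use the antisymmetry relations from Lemma \ref{lemma:Beilinson_Soule_vanishing} to discard the terms containing two functions of a single variable.
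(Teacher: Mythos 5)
Your plan is essentially the paper's proof: for item (1) one computes the tame symbols at the four points $t=0,1,a,\infty$ directly (no alteration is needed, since every divisor on a smooth proper curve is automatically simple normal crossing), and for item (2) one invokes Corollary \ref{cor:differential_exceptional_divisor_zero_Bloch} to kill the contracted divisors and reduces to the two divisors $x_1=x_2$ and $x_2=a$ in the complement of $\bigcup_i\overline{\{f_i=1\}}$, the first contributing zero by the antisymmetry $t\wedge t=0$ and the second giving the stated element. One correction to your item (1), though: there is no pairwise cancellation between $t=0$ and $t=\infty$, and Lemma \ref{lemma:Beilinson_Soule_vanishing} does not kill elements of the form $[\spec\k,\, c\wedge\dots]$ --- that lemma concerns elements $[\P^1,\cdot]$, and elements of $\L(\k,m)$ supported on $\spec\k$ with constant entries are not zero in general (the right-hand side of the formula is itself such an element). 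What actually happens is that each of the three residues at $t=0,1,\infty$ vanishes individually: after rewriting $1-a/t$ as $(t-a)-t$ in the wedge, at each of these points all but one factor is a unit and one of the surviving residue classes equals $1$, which is the zero element of $\k^\times$ written additively, so the tame symbol dies by Lemma \ref{lemma:leibniz_rule_tame_symbol}. With that mechanism in place the rest of your computation goes through as in the paper.
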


\begin{proof}\begin{enumerate}
    \item The first formula goes back to Totaro \cite{totaro_1992_milnork}. This formula is obvious as any birational morphism between smooth proper curves is an isomorphism. The only non-trivial terms in the differential corresponds to the point $t=a$.
    \item The differential of the corresponding element in Bloch's higher Chow group was computed in \cite{bloch_rriz_1994_mixed}, see also \cite{goncharov_levin_gangl_2009_muly_polyl_alg_cycles}. Denote by $Z$ the closure of the zeros of the functions $f_i-1$. Let $U=\P^2\bs Z$. It is easy to see that the only irreducible components of the divisors of $f_i$ on $U$ are $x_1=x_2$ and $x_2=a$. Moreover these divisors belong only to the functions $1-x_2/x_1$ and $1-a/x_2$ correspondingly. So we can apply Corollary \ref{cor:differential_exceptional_divisor_zero_Bloch}. For the divisor $x_1=x_2$ the term in the differential is zero. For  the divisor $x_2=a$ we get the element given in the statement of the lemma.
\end{enumerate}
\end{proof}

\begin{proposition}
    The map $\mc{T}_{\ge m-1}(m)$ is a well-defined morphism of complexes.
\end{proposition}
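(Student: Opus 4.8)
The plan is to establish the two properties that together say $\mc T_{\ge m-1}(m)$ is a morphism of complexes: that the assignment respects the defining relations of $\tau_{\ge m-1}\Gamma(\k,m)$ in each of the two nonzero degrees, and that it commutes with the single nontrivial differential.

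In degree $m$ the source is $\Lambda^m\k^\times$ and the map sends $c_1\wdw c_m$ to $[\spec\k, c_1\wdw c_m]$. Since the target $\Lambda^m(\spec\k)=\Lambda^m(\k^\times\otimes_\Z\Q)$ carries precisely the alternating multilinear relations, this is well-defined essentially tautologically. In degree $m-1$ the source is $\mathcal B_2(\k)\otimes\Lambda^{m-2}\k^\times$, and the relations coming from the $\Lambda^{m-2}$ and tensor factors (multilinearity and the alternating property in $c_3,\dots,c_m$, and additivity in the $\mathcal B_2$-variable) are automatic: the $c_i$ enter the target as wedge factors in $\Lambda^n(\k(\P^1)^\times\otimes_\Z\Q)$, and additivity in the $\mathcal B_2$-slot follows from the relation $[\P^1, w_1+w_2]=[\P^1,w_1]+[\P^1,w_2]$ in $\L(\k,m)$.

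The substantive point for well-definedness in degree $m-1$ is that the generators of $\mathcal R_2(\k)$ must go to zero after wedging with an arbitrary $c_3\wdw c_m$. For the five-term relation this is exactly Corollary \ref{cor:Abel_five_term_relations} in the undecorated case; I would upgrade that corollary to the decorated elements $[\P^1, t\wedge(1-t)\wedge(1-a/t)\wedge c_3\wdw c_m]$ by observing that every identity in Lemma \ref{lemma:abel_five_term_relation_goes_to_zero} carries over verbatim once the vanishing input is drawn from the second item of Lemma \ref{lemma:Beilinson_Soule_vanishing} instead of the first. For the degenerate generators I would compute directly: the image of $\{0\}_2$ vanishes because $1-0/t=1$ is the multiplicative unit, hence the zero wedge-factor; the image of $\{1\}_2$ vanishes because $1-1/t\equiv(1-t)\cdot t^{-1}$ modulo torsion (the sign $-1$ dies in $\otimes_\Z\Q$), producing a repeated entry $t\wedge(1-t)\wedge\big((1-t)-t\big)=0$; and the image of $\{\infty\}_2$ vanishes since the inversion relation $T_a^2=-T_{1/a}^2$ of Lemma \ref{lemma:abel_five_term_relation_goes_to_zero}(1) gives $T_\infty^2=-T_0^2=0$, and likewise for its decorated version.

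Finally, for the chain-map property one verifies $\mc T_{\ge m-1}(m)\circ\delta_m=d\circ\mc T_{\ge m-1}(m)$ on the one nontrivial differential $\delta_m\colon\mathcal B_2(\k)\otimes\Lambda^{m-2}\k^\times\to\Lambda^m\k^\times$. On a generator $\{a\}_2\otimes c_3\wdw c_m$ the combinatorial side yields $[\spec\k, a\wedge(1-a)\wedge c_3\wdw c_m]$, while the geometric side $d[\P^1, t\wedge(1-t)\wedge(1-a/t)\wedge c_3\wdw c_m]$ is computed in item (1) of Lemma \ref{lemma:differential_Totaro_elements} to equal the same element; the two agree, so the square commutes. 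The main obstacle is thus concentrated entirely in the five-term step, namely confirming the decorated form of Lemma \ref{lemma:abel_five_term_relation_goes_to_zero}(3); everything else is either formal or a short local computation.
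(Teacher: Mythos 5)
Your proposal covers two of the three checkpoints the paper verifies (the relations in $\mathcal R_2(\k)$ via the five-term identity, and the commutation with $\delta_m$ via item (1) of Lemma \ref{lemma:differential_Totaro_elements}), and your explicit treatment of the degenerate generators $\{0\}_2,\{1\}_2,\{\infty\}_2$ is a correct filling-in of a detail the paper leaves implicit. But there is a genuine gap: you have not checked that the map descends to the \emph{canonical truncation} of the source. In degree $m-1$ the source of $\mc T_{\ge m-1}(m)$ is not $\mathcal B_2(\k)\otimes\Lambda^{m-2}\k^\times$ but its quotient by $\im\bigl(\delta_m\colon\mathcal B_3(\k)\otimes\Lambda^{m-3}\k^\times\to\mathcal B_2(\k)\otimes\Lambda^{m-2}\k^\times\bigr)$, and for $m\ge 3$ this image is spanned by the elements $\{a\}_2\otimes a\wedge c_4\wdw c_m$, which are not consequences of the relations in $\mathcal R_2(\k)$. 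One must therefore separately prove that
$$[\P^1,\; t\wedge (1-t)\wedge (1-a/t)\wedge a\wedge c_4\wdw c_m]=0\in \L(\k, m)_{m-1}/\im(d),$$
and this is the one step that is neither formal nor a short local computation: the paper establishes it by exhibiting this element as the differential of a two-dimensional cycle on $\P^2$, namely item (2) of Lemma \ref{lemma:differential_Totaro_elements}, whose proof in turn invokes Corollary \ref{cor:differential_exceptional_divisor_zero_Bloch} to control the exceptional divisors. Without this step the map is simply not defined on $\tau_{\ge m-1}\Gamma(\k,m)$ for $m\ge 3$ (for $m=2$ the truncation is vacuous in degree $1$, which may be why the issue escaped notice). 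Aside from this omission, your route coincides with the paper's.

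A minor remark: your argument for $\{\infty\}_2$ via $T_\infty^2=-T_{1/\infty}^2$ is formally suggestive but the expression $1-\infty/t$ does not define a rational function, so the image of $\{\infty\}_2$ should be declared to be $0$ by convention rather than derived from the inversion identity; this does not affect correctness.
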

\begin{proof}
We need to check the following three statements:
\begin{enumerate}
    \item Let $x_1,\dots, x_5$ are five different points on $\P^1$ and $c_3,\dots, c_m\in\k\bs\{0\}$. Then
    $$\sum\limits_{i=1}^5(-1)^i[\P^1, t\wedge (1-t)\wedge (1-(c.r.(x_1,\dots,\hat{x_i},\dots , x_5))/t)\wedge c_3\wdw c_m]=0\in \L(\k, m)_{m-1}/\im(d).$$
    \item We have $$[\P^1, t\wedge (1-t)\wedge (1-a/t)\wedge a\wedge c_4\wdw c_m]=0\in \L(\k, m)_{m-1}/\im(d).$$
    \item We have
    \begin{align*}
        &d([\P^1, t\wedge (1-t)\wedge (1-a/t)\wedge c_3\wdw c_m])=\\
        &[\spec \k, a\wedge (1-a)\wedge c_3\wdw c_m].
    \end{align*}
\end{enumerate}
In the case $m=2$ the first statement is Corollary \ref{cor:Abel_five_term_relations}. The proof for an arbitrary $m$ is similar. The second and the third statements follow from Lemma \ref{lemma:differential_Totaro_elements}.
\end{proof}

\subsection{Reduction to algebraically closed field}
By the main result of \cite{rudenko2021strong} the cohomology $$H^i(\tau_{\ge m-1}\Gamma(\spec \k, m))$$ satisfies Galois descent. The cohomology  $$H^i(\tau_{\ge m-1}\Lambda(\spec \k, m))$$ satisfies Galois descent by Corollary \ref{cor:Galois_descent}. Using standard reductions we can assume that the field $\k$ is algebraically closed. So it remains to show that for any algebraically closed field $\k$, the map  $\mc T_{\ge m-1}(m)$ is an isomorphism of complexes.

\subsection{Totaro map is surjective}
\label{subsec:Tatoro_is_surjective}
In this subsection we will prove that the map $\mc T_{\ge m-1}(m)$ is surjective. This follows from the following result:

\begin{proposition}
\label{prop:Lambda_generated_by_Totaro_cycles}
    Let $\k$ be algebraically closed. The group $\L(\k, m)_{m-1}/\im(d)$ is generated by the elements of the form
    $$[\P^1, t\wedge (1-t)\wedge (1-a/t)\wedge c_4\wdw c_{n}].$$   
\end{proposition}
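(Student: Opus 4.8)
The plan is to combine the generation result Theorem~\ref{th:Lambda_is_generated_by_rationally_connected} with the Beilinson--Soul\'e type vanishing of Lemma~\ref{lemma:Beilinson_Soule_vanishing} and the five-term machinery of Lemma~\ref{lemma:abel_five_term_relation_goes_to_zero}, treating the remaining ``generic'' part by a reciprocity (coboundary) argument. Recall that for $j=m-1$ we have $p=1$ and $n=m+1$, so $\L(\k,m)_{m-1}$ is generated by classes $[Y,a]$ with $Y$ a curve and $a\in\Lambda^{m+1}(\k(Y)^\times)$. By Theorem~\ref{th:Lambda_is_generated_by_rationally_connected} the degree $m-1$ part of $\L(\k,m)/\im(d)$ is generated by classes $[Y'\times\P^1,w]$ with $\dim(Y'\times\P^1)=1$; since $\k$ is algebraically closed this forces $Y'=\spec\k$, so it suffices to treat $[\P^1,w]$ with $w\in\Lambda^{m+1}(\k(t)^\times)$. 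As $\k$ is algebraically closed, every rational function on $\P^1$ factors into constants and linear factors $t-a$; using multilinearity of the wedge I would reduce to the case
$$w=\ell_1\wedge\dots\wedge\ell_k\wedge c_{k+1}\wedge\dots\wedge c_{m+1},$$
where each $\ell_i$ is a linear factor $t-a_i$ and each $c_j\in\k^\times$. The key invariant is the number $k$ of genuinely non-constant factors, and I would argue by induction on $k$ that every such class lies, modulo $\im(d)$, in the span of the Totaro elements.

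For $k\le 2$ the class $[\P^1,w]$ vanishes already in $\L(\k,m)_{m-1}$ by Lemma~\ref{lemma:Beilinson_Soule_vanishing}(2), after moving the non-constant factors into the first two slots. For $k=3$ I would identify $[\P^1,\ell_1\wedge\ell_2\wedge\ell_3\wedge \vec c\,]$ with a Totaro element as follows. Writing each $\ell_i$ as the ratio of the linear form vanishing at $a_i$ and the linear form vanishing at $\infty$, the wedge $\ell_1\wedge\ell_2\wedge\ell_3$ is, up to terms having a constant entry, the element $T(l_1,l_2,l_3,l_4)$ of Lemma~\ref{lemma:abel_five_term_relation_goes_to_zero}; the correction terms have at most two non-constant factors and vanish by Lemma~\ref{lemma:Beilinson_Soule_vanishing}(2). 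By Lemma~\ref{lemma:abel_five_term_relation_goes_to_zero}(2) this equals $T^2_{a}$ for the relevant cross-ratio $a$, that is, $[\P^1,t\wedge(1-t)\wedge(1-a/t)]$; carrying along the constant tail $\vec c$ (again using Lemma~\ref{lemma:Beilinson_Soule_vanishing}(2) to discard lower terms) produces exactly a Totaro generator.

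The main obstacle is the inductive step $k\ge 4$, where the number of non-constant factors must be lowered. Here I would produce a coboundary realizing the target up to lower-order terms: choose a rational surface $S$ (for instance $\P^1_x\times\P^1_t$, possibly blown up) and an element $\Omega\in\Lambda^{m+2}(\k(S)^\times)$ built from factors of the form $1-(\cdot)$ together with one ``residue-providing'' coordinate, arranged so that in $d([S,\Omega])=\sum_D\ts_D(\Omega)$ exactly one residue equals $[\P^1,\ell_1\wedge\dots\wedge\ell_k\wedge\vec c\,]$, while every other residue either vanishes by Corollary~\ref{cor:differential_exceptional_divisor_zero_Bloch} (because some factor restricts to $1$) or has strictly fewer non-constant factors. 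The subtle point, exactly as in the computation behind Lemma~\ref{lemma:differential_Totaro_elements}(2), is that naive choices make the residues at the zero-divisor and the pole-divisor of the residue-providing coordinate cancel; one must break this symmetry by replacing a factor $t-a_i$ with a factor depending on both coordinates (such as $t-x$ or $1-a_i/x$) that restricts to a linear factor on the distinguished divisor but degenerates elsewhere. Granting such an $\Omega$, the target becomes congruent modulo $\im(d)$ to a combination of classes with fewer non-constant factors, which are Totaro elements by the inductive hypothesis. I expect the construction of $\Omega$, together with the verification that all spurious residues degenerate, to be the technical heart of the argument; the five-term relation of Corollary~\ref{cor:Abel_five_term_relations} may in addition be needed to normalise the cross-ratios that appear after the reduction.
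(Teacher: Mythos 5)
Your skeleton coincides with the paper's: Theorem~\ref{th:Lambda_is_generated_by_rationally_connected} reduces everything to classes $[\P^1,(t-a_1)\wdw(t-a_k)\wedge c_{k+1}\wdw c_n]$ (using that $\k$ is algebraically closed to split into linear factors), one inducts on the number $k$ of non-constant factors, the case $k\le 2$ is Lemma~\ref{lemma:Beilinson_Soule_vanishing}, and $k=3$ is Lemma~\ref{lemma:abel_five_term_relation_goes_to_zero}. Up to that point the proposal is correct and is exactly the paper's argument.

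The gap is the inductive step $k\ge 4$, which you correctly identify as the technical heart but do not carry out: you posit an element $\Omega$ on a surface with the required residue properties and proceed "granting such an $\Omega$". Moreover, your guess for how to build it (replacing a factor $t-a_i$ by something like $t-x$ or $1-a_i/x$) is not the construction that works. The mechanism that actually lowers $k$ is to couple the auxiliary coordinate $x$ to a \emph{degree-two} rational function assembled from two of the linear factors. The paper takes, on $\P^1_x\t\P^1_t$,
$$\Omega=x\wedge(1-x)\wedge\bigl(x-(t-a_1)/(t-a_2)\bigr)\wedge(t-a_3)\wdw(t-a_k)\wedge c_{k+1}\wdw c_n.$$
The residue along the graph $x=g(t)$, $g=(t-a_1)/(t-a_2)$, is $g\wedge(1-g)\wedge(t-a_3)\wdw(t-a_k)\wedge\vec c$; since $1-g=(a_1-a_2)/(t-a_2)$, multilinear expansion yields $\pm(t-a_1)\wedge(t-a_2)\wedge(t-a_3)\wdw(t-a_k)\wedge\vec c$ (the target) plus terms with at most $k-1$ non-constant factors, which lie in $A$ by induction. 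Every other residue is absorbed into $A$: the block $x\wedge(1-x)$ kills the contributions at $x=0,1,\infty$, the residues along $t=a_i$ and $t=\infty$ are of Totaro type $x\wedge(1-x)\wedge(x-b)\wedge\vec c$, and the divisors contracted by the resolution making $\Omega$ strictly regular are controlled by Lemma~\ref{lemma:P1_genrated_by_Totaro}, which you do not invoke: it shows those residues have the form $[\P^1,c\wedge(1-c)\wedge\cdots]$ with $c$ constant, hence again fall under the inductive hypothesis. Without exhibiting this element and performing this residue bookkeeping, the proof of the proposition beyond $k=3$ is not established.
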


\begin{lemma}
\label{lemma:P1_genrated_by_Totaro}
    Let $S=\P^1\t\P^1$. Consider the element
    $$a=[S, x\wedge (1-x)\wedge \alpha_2\wdw \alpha_k\wedge c_{k+1}\wdw c_n].$$
    In this formula $x$ is the canonical coordinates on the first $\P^1$, $\alpha_i\in\k(\P^1\t\P^1)$ and $c_i\in \k$. For any birational morphism $\ph\colon \wt S\to S$, and any divisor $E$ contracted under $\ph$, the element $[E, \ts_E(\ph^*(a))]$ can be represented as linear combination of the elements of the form
    $$[\P^1, c\wedge (1-c)\wedge  \beta_3\wdw \beta_{k} \wedge c_{k+1}\wdw c_n].$$
    In this formula $c\in \k\bs\{0,1\}$ and $\beta_i\in \k(\P^1)$.
\end{lemma}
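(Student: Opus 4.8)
The plan is to exploit the fact that a contracted divisor maps to a \emph{point}, so the residue along $E$ of the distinguished factor $x\wedge(1-x)$ — which depends only on the first coordinate of $S=\P^1\t\P^1$ — is governed entirely by the first coordinate $x_0$ of that point. First I would record the general principle: since $E$ is contracted under the birational morphism $\ph$, its image $\ph(E)$ is irreducible of dimension $0$, i.e.\ a single point $P=(x_0,y_0)\in\P^1\t\P^1$, and the generic point of $E$ maps to $P$. Consequently, for any rational function $\xi$ on $S$ that is regular and nonzero at $P$, the pullback $\ph^*(\xi)$ is a unit along $E$ (that is, $\ord_E\ph^*(\xi)=0$) whose restriction to $E$ is the constant $\xi(P)$; in particular this holds for each of the constants $c_{k+1},\dots,c_n$. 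The proof then proceeds by cases on $x_0$.

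The main case is $x_0\notin\{0,1,\infty\}$. Here both $x$ and $1-x$ are regular and nonzero at $P$, so $\ph^*(x),\ph^*(1-x)$ are units along $E$ with restrictions $x_0$ and $1-x_0$. Reordering $\ph^*(a)$ so that the unit factors $\ph^*(x),\ph^*(1-x),c_{k+1},\dots,c_n$ come last and applying Lemma \ref{lemma:leibniz_rule_tame_symbol}, I would obtain
\begin{align*}
\ts_E(\ph^*(a))=\pm\,x_0\wedge(1-x_0)\wedge\ts_E\big(\ph^*(\alpha_2)\wdw\ph^*(\alpha_k)\big)\wedge c_{k+1}\wdw c_n.
\end{align*}
Since $\ts_E(\ph^*(\alpha_2)\wdw\ph^*(\alpha_k))\in\L^{k-2}(\P^1)$ is by definition a $\Q$-linear combination of wedges $\beta_3\wdw\beta_k$ of functions on $E\cong\P^1$, the element $[E,\ts_E(\ph^*(a))]$ is a linear combination of the required Totaro elements, with $c=x_0\in\k\bs\{0,1\}$.

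It remains to dispose of the boundary values $x_0\in\{0,1,\infty\}$, where in each case I would show the residue simply vanishes. If $x_0=0$ (resp.\ $x_0=1$) then $1-x$ (resp.\ $x$) takes the value $1$ at $P$, so its pullback is a unit along $E$ restricting to the constant $1$; as $1$ is the identity of $\k(\P^1)^\t$, it is the zero of $\k(\P^1)^\t\otimes\Q$ in additive notation, and the Leibniz rule forces $\ts_E(\ph^*(a))=0$ (the empty, hence admissible, combination). For $x_0=\infty$ I would set $x'=1/x$ and use that $-1$ is torsion in $\k^\t$, so $[-1]=0$ after tensoring with $\Q$; a short exterior-algebra computation then gives $x\wedge(1-x)=-(x'\wedge(1-x'))$, and since $x'(P)=0$ the factor $1-x'$ restricts to $1$ along $E$, reducing this to the previous case and again yielding $\ts_E(\ph^*(a))=0$. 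The only genuinely delicate points are the bookkeeping in the Leibniz rule that isolates the factor $x_0\wedge(1-x_0)$ in the main case, and the rewriting $x\wedge(1-x)=-(x'\wedge(1-x'))$ at infinity, where it is essential that we have tensored with $\Q$ in order to discard $[-1]$; everything else is formal.
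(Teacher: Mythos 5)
Your proof is correct and follows essentially the same route as the paper's: case analysis on the first coordinate $x_0$ of the image point of $E$, with the Leibniz rule (Lemma \ref{lemma:leibniz_rule_tame_symbol}) isolating the constant factor $x_0\wedge(1-x_0)$ in the main case and a unit restricting to $1$ killing the residue when $x_0\in\{0,1\}$. The only cosmetic difference is at $x_0=\infty$, where the paper rewrites $x\wedge(1-x)$ as $(x/(1-x))\wedge(1-x)$ and uses that $x/(1-x)$ restricts to the torsion element $-1$, while you substitute $x'=1/x$; both hinge on the same fact that torsion dies after tensoring with $\Q$.
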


\begin{proof}[The proof of Lemma \ref{lemma:P1_genrated_by_Totaro}]
    Denote by $(x,t)$ the canonical coordinates on $S$. Let $\{(x_0,t_0)\}\subset S$ be the image of $E$. If $x_0=0$ then the the restriction of the function $\ph^*(1-x)$  to $E$ is equal to $1$ and so $[E, \ts_E(\ph^*(a))]=0$. The case $x_0=1$ is similar. Let us assume that $x_0=\infty$. In this case, the restriction of the function $\ph^*(x/(1-x))$ to $E$ is equal to $-1$. As the element $a$ can be rewritten in the form 
    $$a=[S, (x/(1-x))\wedge (1-x)\wedge \alpha_2\wdw \alpha_k\wedge c_{k+1}\wdw c_n],$$
    this implies that $[E, \ts_E(\ph^*(a))]=0.$
    
    So we can assume that $x_0\in \k\bs\{0,1\}$. This implies that the restriction of the function $\ph^*(x)$ to $E$ is constant. Denote this constant by $c$. We get
    $$[E, \ts_E(\ph^*(a))]=[E, c\wedge (1-c)\wedge \ts_{E}(\alpha_2\wdw \alpha_k)\wedge c_{k+1}\wdw c_n].$$
    The statement follows.
\end{proof}

\begin{proof}[The proof of Proposition \ref{prop:Lambda_generated_by_Totaro_cycles}]
Denote by $A$ the subgroup of the group $\Lambda(\k, m)_{m-1}/\im(d)$ generated by the elements stated in the lemma.

    By Theorem \ref{th:Lambda_is_generated_by_rationally_connected} we know that the group $\Lambda(\k, m)_{m-1}/\im(d)$ is generated by the elements of the form
    $$[\P^1, (t-a_1)\wdw (t-a_k)\wedge c_{k+1}\wdw c_n].$$
    Let us prove by induction on $k, k\geq 0$, that this element lies in $A$. 
    \begin{enumerate}
        \item The case $k\leq 2$ follows from Lemma \ref{lemma:Beilinson_Soule_vanishing}.
        \item The case $k=3$ follows from Lemma \ref{lemma:abel_five_term_relation_goes_to_zero}.
        \item Let $k\geq 4$. Consider the element
        \begin{align*}
            &[\P^1\t\P^1, x\wedge (1-x)\wedge (x-(t-a_1)/(t-a_2))\wedge\\
            &(t-a_3)\wdw (t-a_k)\wedge c_{k+1}\wdw c_n].
        \end{align*}
    
    It follows from Lemma \ref{lemma:P1_genrated_by_Totaro} that the differential of this element is equal to $x+y$, where $x\in A$ and 
    $$y=(t-a_1)/(t-a_2)\wedge (1-(t-a_1)/(t-a_2))\wedge (t-a_3)\wdw (t-a_{k})\wedge c_{k+1}\wdw c_n.$$
    By inductive assumption this implies that the element
    $$[\P^1, (t-a_1)\wdw (t-a_k)\wedge c_{k+1}\wdw c_n].$$
    lies in $A$.
    \end{enumerate}

\end{proof}

\subsection{Totaro map is injective}
To prove that the map $\mc{CD}_{\ge m-1}(m)$ is injective we construct a map in the opposite direction. 

The following proposition was proved in \cite{goncharov1994polylogarithms}:

\begin{proposition}
\label{prop:tame_symbol}
Let $(F,\nu)$ be a discrete valuation field and $m\geq 1$. There is a unique morphism of complexes $\ts_\nu\colon \Gamma(F,m)\to \Gamma(\ol F_\nu, m-1)[-1]$ satisfying the following conditions:

\begin{enumerate}
   \item For any uniformizer $\pi$ and units $u_2,\dots u_m\in F$ we have $\ts_\nu(\pi\wedge u_2\wedge\dots \wedge u_m)=\overline {u_2}\wedge \dots \wedge \overline{u_m}$.
    \item For any $a\in F\bs \{0,1\}$ with $\nu(a)\ne 0$, an integer $k$ satisfying $2\leq k\leq n$ and any $b\in \Lambda^{n-k}F^\t$ we have $\ts_\nu(\{a\}_k\otimes b)=0$.
    \item For any unit $u$, an integer $k$ satisfying $2\leq k\leq n$ and $b\in \Lambda^{n-k}F^\t$ we have $\ts_\nu(\{u\}_k\otimes b)=-\{\overline u\}_k\otimes \ts_\nu(b)$.
\end{enumerate}
\end{proposition}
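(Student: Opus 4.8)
The plan is to build $\ts_\nu$ on each graded piece of $\Gamma(F,m)$ separately and then to verify compatibility with the differential. In degree $m$ the source is $\L^m F^\t$, and condition (1) forces $\ts_\nu$ there to be exactly the classical tame symbol of Proposition \ref{prop:tame_symbol_classiacal}, landing in $\L^{m-1}\ol F_\nu^\t$, the degree-$m$ component of $\Gamma(\ol F_\nu,m-1)[-1]$. In degree $m-k+1$, for $2\le k\le m$, the source is $\B_k(F)\otimes\L^{m-k}F^\t$ and the target is $\B_k(\ol F_\nu)\otimes\L^{m-k-1}\ol F_\nu^\t$; here conditions (2) and (3) prescribe $\ts_\nu$ to kill the $\B_k$–argument when it has nonzero valuation and to reduce it modulo the maximal ideal otherwise, while applying the classical tame symbol to the $\L$–factor. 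Uniqueness is then immediate: conditions (2)--(3) determine the value on every generator $\{x\}_k\otimes b$ according to whether $x$ is a unit, and together with bilinearity and the Leibniz rule of Lemma \ref{lemma:leibniz_rule_tame_symbol} this reduces $b$ to wedges of units and at most one uniformiser, on which condition (1) and Proposition \ref{prop:tame_symbol_classiacal} leave no freedom.

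For existence I would first define a reduction map $s_\nu$ on free groups $\Q[\P^1(F)]\to\Q[\P^1(\ol F_\nu)]$ by $s_\nu\{x\}_k=\{\ol x\}_k$ when $x\in\mc O_\nu^\t$ and $s_\nu\{x\}_k=0$ when $\nu(x)\ne0$, and then set $\ts_\nu(\{x\}_k\otimes b)=-s_\nu\{x\}_k\otimes\ts_\nu(b)$, where on the right $\ts_\nu$ is the classical tame symbol applied to $b$. The content to be verified is that $s_\nu$ descends from $\Q[\P^1(F)]$ to $\B_k(F)$, i.e.\ that it respects the defining relations $\mathcal R_k(F)$. As the text notes, only $\mathcal R_2(F)$ is relevant for the truncation $\tau_{\ge m-1}$, so it suffices to treat $k=2$. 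There the check is explicit: a five-term configuration of five points of $\P^1(F)$ either has all reductions distinct, in which case $s_\nu$ sends it to the corresponding five-term relation over $\ol F_\nu$, or some reductions collide, in which case the degenerating cross-ratios land in $\{0,1,\infty\}$ and the surviving terms cancel in $\B_2(\ol F_\nu)$; the generators $\{0\}_2,\{1\}_2,\{\infty\}_2$ map to the like-named generators. I would carry this out in the cross-ratio formulation, exactly parallel to Lemma \ref{lemma:abel_five_term_relation_goes_to_zero} and Corollary \ref{cor:Abel_five_term_relations}.

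It then remains to show that $\ts_\nu$ commutes with the differentials, up to the global sign introduced by the shift $[-1]$ (this sign is the reason for the minus in condition (3)). I would verify $\ts_\nu\circ\delta_m=\pm\,\delta_{m-1}\circ\ts_\nu$ on generators $\{x\}_k\otimes b$, splitting on $\nu(x)=0$ versus $\nu(x)\ne0$ and, within each, on how many uniformisers occur in $b$. When $x$ is a unit the identity reduces, via condition (3) and the Leibniz rule, to the known commutation of the classical tame symbol with the wedge product; the only delicate term is the bottom differential $\delta_m(\{x\}_2\otimes\cdots)=x\wedge(1-x)\wedge\cdots$, where for a unit $x$ with $\ol x\ne 0,1$ one gets $\ol x\wedge(1-\ol x)\wedge\cdots$ after reduction, matching $\delta_{m-1}(\{\ol x\}_2\otimes\ts_\nu(\cdots))$. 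When $\nu(x)\ne0$ the source term is annihilated by condition (2), and one must check that $\ts_\nu(x\wedge(1-x)\wedge\cdots)$ vanishes as well: if $\nu(x)>0$ then $1-x$ is a unit with $\ol{1-x}=1$, so by Lemma \ref{lemma:leibniz_rule_tame_symbol} the residue factors through $\ol{1-x}=1\in\ol F_\nu^\t$ and dies, and the case $\nu(x)<0$ is symmetric after replacing $x$ by $1/x$.

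The main obstacle is the well-definedness of $s_\nu$ on $\B_k(F)$ for $k\ge 3$: there $\mathcal R_k(F)$ consists of Goncharov's higher functional equations, for which there is no convenient closed form, so a direct symbol manipulation is impractical. The clean remedy is a specialisation argument --- a generic configuration defining a relation specialises along $\nu$ either to another relation or to a combination that $s_\nu$ annihilates --- but since only $\mathcal R_2(F)$ enters the truncated complex used in this paper, this difficulty never actually arises and the explicit five-term verification above is all that is needed.
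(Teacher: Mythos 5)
The paper does not actually prove this proposition: it is quoted from Goncharov's work and used as a black box, so there is no internal proof to compare yours against. Judged on its own terms, your construction is the standard one (reduce the $\mc B_k$-argument when it is a unit, kill it when it has nonzero valuation, apply the classical tame symbol to the $\L$-factor, insert a sign for the shift), your uniqueness argument is fine, and the check of compatibility with the differentials is correct in outline; in particular the case split for $\delta_m(\{x\}_2\otimes\cdots)=x\wedge(1-x)\wedge\cdots$ according to $\nu(x)=0$, $\nu(x)>0$, $\nu(x)<0$ is exactly the right one.

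The one place where your sketch genuinely understates the difficulty is the descent of $s_\nu$ through $\mc R_2(F)$. Your dichotomy --- either all five reductions are distinct and one gets the five-term relation over $\ol F_\nu$, or some reductions collide and ``the degenerating cross-ratios land in $\{0,1,\infty\}$ and the surviving terms cancel'' --- fails as stated when three or more of the $\ol{x_i}$ coincide. For instance, with $x_1=0$, $x_2=\pi$, $x_3=\lambda\pi$, $x_4=\mu\pi$, $x_5=\infty$, where $\lambda,\mu$ are units with distinct reductions different from $0,1$, all five cross-ratios are units whose reductions lie outside $\{0,1,\infty\}$, so no term is discarded; what saves the argument is that the five reduced cross-ratios turn out to be the cross-ratios of the configuration $0,1,\ol\lambda,\ol\mu,\infty$, so the alternating sum is again a five-term relation in $\mc B_2(\ol F_\nu)$. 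A complete proof has to organize the degenerations this way (rescaling clusters of colliding points by powers of the uniformiser before reducing), not merely drop colliding terms; when only two reductions coincide your cancellation-in-pairs argument does work. Finally, note that the proposition as stated concerns the full complex $\Gamma(F,m)$, so the well-definedness on $\mc B_k$ for $k\ge 3$ cannot simply be waived in a proof of the statement itself --- though, as you and the paper both observe, only the $k=2$ case is ever used in this article, and for the full statement one would in any case defer to Goncharov.
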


Let $X$ be a smooth variety over $\k$ and $D$ be an irreducible divisor on $X$. Denote by $\nu_D$ the discrete valuation corresponding to $D$. We will use the notation $\ts_D$ for the map $\ts_{\nu_D}$.

Let $\k$ be an algebraically closed field and $X$ be a smooth proper curve over $\k$. For any $x\in X(\k)$ we get the map 
$$\ts_x\colon \tau_{\geq m}\Gamma(\k(X),m+1)\to (\tau_{\geq m-1}\Gamma(\k,m))[-1].$$ 
Denote by 
$$Tot_{X}\colon \tau_{\geq m}\Gamma(\k(X),m+1)\to (\tau_{\geq m-1}\Gamma(\k,m))[-1]$$ the sum of these maps over all points $x\in X(\k)$. The following result is a slight generalisation of the main result from \cite{bolbachan_2023_chow}.

\begin{theorem}
\label{th:strong_suslin_reciprocity_law_varities}
    Let $\k$ be an algebraically closed field. To any smooth proper curve $X$ over $\k$ one can associate the canonical map $$\mc H_X\colon \L^{m+1}\k(X)^\t\to \Gamma(\k, m)_{m-1}/\im(\delta_m)$$ satisfying the following properties:
    \begin{enumerate}
        \item The map $\mc H_X$ gives a homotopy between $Tot_{X}$ and the zero map
        \item For any $f_1, f_2\in\k(X)$ and $ c_i\in\k$ we have $$\mc H_X(f_1\wedge f_2\wedge c_3\wdw c_{m+1})=0.$$
        
        \item For any non-constant map $\varphi\colon X\to Y$ we have
        $$\mc H_Y(a)=1/\deg(\varphi)\mc H_X(\varphi^*(a)).$$
        \item $$\mc H_{\P^1}(t\wedge (1-t)\wedge (1-a/t)\wedge c_4\wdw c_{m+1})=-\{a\}_2\otimes c_4\wdw c_{m+1}.$$
        \item Let $S$ be a smooth proper surface. Assume that the element $b\in\L^{m+2}(S)$ is strictly regular (see Definition \ref{def:strictly_regular}). We have
        $$\sum\limits_{C\subset S}\mc H_C(\ts_C(b))=0.$$
    \end{enumerate}
    Moreover the family of the maps $\mc H_X$ are uniquely determined by the properties stated above.
    \end{theorem}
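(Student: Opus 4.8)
The plan is to deduce the statement from the main theorem of \cite{bolbachan_2023_chow}, which already produces, for every smooth proper curve $X$ over an algebraically closed field, a homotopy $\mc H_X$ with the characterising properties in a fixed weight; the genuinely new points are the passive constant factors $c_i\in\k$ (which raise the weight) and property (5), phrased in the language of the complex $\L$. Before anything else I would unwind property (1): since by Proposition \ref{prop:tame_symbol} each $\ts_x$ is a morphism of complexes and the truncated source and target are concentrated in degrees $m,m+1$, the homotopy $\mc H_X$ is nonzero only on the top term, and property (1) becomes the pair of identities $\delta_m\circ\mc H_X=\sum_{x}\ts_x$ on $\L^{m+1}\k(X)^\t$ and $\mc H_X\circ\delta_m=\sum_x\ts_x$ on $\mathcal B_2(\k(X))\otimes\Lambda^{m-1}\k(X)^\t$, the latter read in the cokernel $\Gamma(\k,m)_{m-1}/\im(\delta_m)$. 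I record that $\delta_m\circ\mc H_X=0$ forces values in $H^{m-1}(\Gamma(\k,m))$, which is what makes the uniqueness argument below rigid.

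Next I would settle existence and properties (2)--(4). For $X=\P^1$ the map $\mc H_{\P^1}$, the normalisation (4), and the vanishing (2) are supplied by \cite{bolbachan_2023_chow}; property (1) for the line is Suslin reciprocity. The constant factors $c_4\wdw c_{m+1}$ never interact with the residues taken at points of $X$ and so ride along passively by the Leibniz-type behaviour of the tame symbol (Lemma \ref{lemma:leibniz_rule_tame_symbol} and part (2) of Proposition \ref{prop:tame_symbol}); this is what upgrades the fixed-weight statement of \cite{bolbachan_2023_chow} to arbitrary $m$. Functoriality (3) for a non-constant $\varphi\colon X\to Y$ then follows from the compatibility of the two tame symbols with pullback (Lemma \ref{lemma:functoriality_of_residue_2}) together with the relation $\deg(\varphi)=\sum e(D',D)f(D',D)$ of Lemma \ref{lemma:degree_is_a_sum_multiplicties}, exactly as in the proof that the differential $d$ is well defined (Proposition \ref{prop:d_is_well_defined}).

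The heart of the matter is property (5): for a smooth proper surface $S$ and a strictly regular $b\in\L^{m+2}(S)$ one must prove $\sum_{C\subset S}\mc H_C(\ts_C(b))=0$, where for each curve $C$ one uses $\mc H_{\wt C}$ on the normalisation. I would prove this by the mechanism of Theorem \ref{th:Parshin_reciprocity_law}: choose a dominant map $S\ard\P^1$, resolve it to a morphism $\pi\colon S'\to\P^1$ so that the pullback of $b$ is strictly regular, and split the sum over curves $C\subset S'$ into those contracted by $\pi$ (contained in fibres, each a copy of $\P^1$) and those dominating $\P^1$. Using (3) to push the horizontal contributions down to $\P^1$ and the fibrewise structure for the vertical ones, the vanishing of the total sum reduces to the two-dimensional reciprocity of \cite[Theorem 2.15]{bolbachan_2023_chow} together with the five-term relation of Lemma \ref{lemma:abel_five_term_relation_goes_to_zero}; strict regularity of $b$ guarantees that only the expected double residues survive, just as in the proof of Theorem \ref{th:Parshin_reciprocity_law}. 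This reciprocity is exactly what makes $\mc H_X$ independent of auxiliary choices, and it is the step I expect to be the main obstacle.

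Finally, uniqueness. Given two families satisfying (1)--(5), their difference $\Delta_X$ has $\delta_m\circ\Delta_X=0$ by (1), so it takes values in $H^{m-1}(\Gamma(\k,m))$, vanishes on all decomposable elements $f_1\wedge f_2\wedge\cdots$ by (2), and is functorial by (3). By the generation theorem (Theorem \ref{th:Lambda_is_generated_by_rationally_connected}) and the argument of Proposition \ref{prop:Lambda_generated_by_Totaro_cycles}, every relevant class is a $\Q$-combination of Totaro elements $t\wedge(1-t)\wedge(1-a/t)\wedge c_4\wdw c_{m+1}$ defined on $\P^1$, on which (4) prescribes the value of $\mc H$ outright; property (5) guarantees that the reductions to such generators are consistent. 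Hence $\Delta_X=0$ and the family is unique. Everything outside property (5) is thus either quoted from \cite{bolbachan_2023_chow} or formal manipulation with the two identities above.
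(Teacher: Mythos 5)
There is a genuine gap, and it is in the existence half. You propose to obtain $\mc H_X$ in weight $m+1$ from the weight-$3$ map of \cite{bolbachan_2023_chow} by letting the constant factors $c_4\wdw c_{m+1}$ ``ride along passively''. But a general element of $\L^{m+1}\k(X)^\t$ is not a linear combination of elements of the form $(\text{weight-}3)\wedge(\text{constants})$ together with decomposables $f_1\wedge f_2\wedge c_3\wdw c_{m+1}$, so this prescription does not define $\mc H_X$ on its whole domain. Even for $X=\P^1$ the paper has to prove the decomposition $\L^{m+1}\k(t)^\t=\im\delta_{m+1}+\L^2\k(t)^\t\wedge\L^{m-1}\k^\t$ and then invoke \cite{rudenko2021strong} via Lemma \ref{lemma:Res_is_zero} to see that the resulting map is well defined (Proposition \ref{prop:SRL_P1}); for a curve of higher genus no such decomposition is available, and the paper instead builds $\mc H_{\k(X)}=N_{\k(X)/\k(t)}(h_{\k(t)})$ through the lift construction (Theorem \ref{th:main_exact_sequence}, Definitions \ref{def:theta_gen} and \ref{def:norm_map}). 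For the same reason property (3) is not a formal consequence of Lemmas \ref{lemma:functoriality_of_residue_2} and \ref{lemma:degree_is_a_sum_multiplicties}: $\mc H_X$ is a homotopy, not a residue, and the identity $\mc H_Y=\frac1{\deg\varphi}\mc H_X\circ\varphi^*$ is exactly the statement $\SRL(j)\circ N_{F_2/F_1}=\id$ together with independence of $N_{F/\k(t)}(h_{\k(t)})$ of the chosen embedding, which is one of the main points of the construction rather than an input to it.

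The second gap is property (5). You reduce it to the two-dimensional reciprocity of \cite[Theorem 2.15]{bolbachan_2023_chow}, but that result is the Parshin-type identity $\sum\ts\ts=0$ for iterated tame symbols, whereas (5) replaces the inner tame symbol by the homotopy $\mc H_C$; these are different statements, and the fibering-over-$\P^1$ argument leaves you with a sum of values of $\mc H_{\P^1}$ whose vanishing is precisely what has to be proved. In the paper this is Theorem \ref{th:two_dimensional_reciprocity_law}, and its proof is entangled with the construction itself: the defining formula $\mc N_\nu(h)(a)=-\sum_{\mu\in\dval(L)_{sp}}\mc N_\mu(h)(\ts_\mu(b))$ for a lift $b$ of $a$, combined with Theorem \ref{th:Parshin_sum_point_curve} and Lemma \ref{lemma:finitnes_of_sum}. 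On the positive side, your uniqueness argument is a genuinely different and attractive route: properties (1)--(3) and (5) show that the difference of two families descends to a map on $\L(\k,m)_{m-1}/\im(d)$, which by Proposition \ref{prop:Lambda_generated_by_Totaro_cycles} is generated by Totaro elements, where (4) forces the difference to vanish; the paper instead deduces uniqueness from rigidity over $\k(t)$ and the norm maps. But this only disposes of uniqueness; existence and (5) still require the machinery of Section \ref{sec:strong_suslin_reciprocity_law}.
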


    This theorem will be proved in Section \ref{sec:strong_suslin_reciprocity_law}.

Define a morphism of complexes 
$$\mc{CD}_{\geq m-1}(m)\colon \tau_{\geq m-1}\Lambda(\k,m )\to \tau_{\geq m-1}\Gamma(\k, m)$$
as follows. The image of the element $[X, a]\in \Lambda(\k,m )_{m-1}$ is equal to $-\mc H_{X}(a)$. The image of the element $[\spec \k, a]$ is equal to $a$. It follows from the previous theorem that in this way we get a morphism of complexes. It also follows from the same theorem that $\mc{CD}_{\geq m-1}\circ \mc T_{\geq m-1}(m)=id$. This implies that the map $\mc T_{\ge m-1}(m)$ is injective.

\section{Strong Suslin reciprocity law}
\label{sec:strong_suslin_reciprocity_law}

Denote by $\F_d$ the category of finitely generated extensions of $\k$ of transcendence degree $d$. For $F\in \F_d$, denote by $\dval(F)$ the set of discrete valuations given by an irreducible Cartier divisor on some  birational model of $F$. When $F\in\F_1$ this set is equal to the set of all discrete valuations which are trivial on $\k$. In this case, we denote this set simply by $\val(F)$.

For $F\in\F_1$ denote
$$Tot_F:=\sum\limits_{\nu\in\val(F)}\ts_\nu\colon \tau_{\geq m}\Gamma(F, m+1)\to (\tau_{\geq m-1}\Gamma(\k, m))[-1].$$

If we realise $F$ as $\k(X)$ for some smooth proper curve $X$, then the map $Tot_F$ is identified with the map $Tot_X$ from the previous section.

\begin{definition}
\label{def:SRL}
    Let $F\in \F_1$ be a smooth proper curve over $\k$. 
    \emph{A lifted reciprocity map} on the field $F$  is a map $$h\colon \L^{m+1}(F^\t)\to \Gamma(\k,m)_{m-1}/\im \delta_{m}$$
    such that:
    \begin{enumerate}
    \item The map $h$ gives a homotopy between $Tot_F$ and the zero map:
    \begin{equation}
    \label{diagram:SRL_def}
        \begin{tikzcd}
            (\B_2(F)\otimes \L^{m-1}F^\t)/         \im(\delta_{m+1})\ar[d,"Tot_F"]\ar[r,"\delta_{m+1}"]& \L^{m+1}F^\t \ar[d,"Tot_F"]\ar[dl,"h"]\\
            (\B_2(\k)\otimes \L^{m-2}\k^\t)/\im(\delta_m)\ar[r,"-\delta_{m}"] & \L^{m}\k^\t          
        \end{tikzcd}
    \end{equation}
        \item $h(f_1\wedge f_2\wedge c_3\wdw c_{m+1})=0$
        for any $f_i\in F$ and  $c_i\in\k$.

    \end{enumerate}
\end{definition}

The goal of this section is to prove the following two theorems:

\begin{theorem}
\label{th:strong_suslin_reciprocity_law_field}
    To any field $F\in\F_1$ one can associate a lifted reciprocity map $\mc H_F$ on the field $F$ such that:
    \begin{enumerate}
        \item For any embedding $j\colon F_1\emb F_2$ in $\F_1$ we have $$\mc H_{F_1}(a)=(1/\deg(F_2/F_1))\mc H_{F_2}(j(a)).$$
        \item $$\mc H_{\k(t)}(t\wedge (1-t)\wedge (1-a/t)\wedge c_4\wdw c_{m+1})=-\{a\}_2\otimes c_4\wdw c_{m+1}.$$
    \end{enumerate}
    Moreover the family of the maps $\mc H_F$ are uniquely determined by the properties stated above.
    \end{theorem}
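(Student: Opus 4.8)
The plan is to prove existence and uniqueness separately, building existence on the construction of \cite{bolbachan_2023_chow} and reserving the genuinely new effort for the functorial characterization that forces uniqueness.

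For \emph{existence}, I would first produce the map on the rational function field. The total tame symbol $Tot_{\k(t)}$ is nullhomotopic by the strong Suslin reciprocity law of \cite{bolbachan_2023_chow}, and I would arrange the homotopy so that condition (2) of Definition \ref{def:SRL} holds and so that the normalization $\mc H_{\k(t)}(t\wedge(1-t)\wedge(1-a/t)\wedge c_4\wdw c_{m+1})=-\{a\}_2\otimes c_4\wdw c_{m+1}$ is satisfied. This value is compatible with condition (1) of Definition \ref{def:SRL}, since the tame symbol of Proposition \ref{prop:tame_symbol} sends the Totaro element to $a\wedge(1-a)\wedge c_4\wdw c_{m+1}$, which matches $-\delta_m$ applied to the prescribed value. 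For a general $F=\k(X)\in\F_1$, the same construction carried out on the curve $X$ yields a candidate $\mc H_F$ assembled from the local contributions $\ts_\nu$ over the points of $X$; the compatibility of these contributions under a finite morphism $\ph\colon X\to X'$ is governed by Lemma \ref{lemma:functoriality_of_residue_2} and the degree formula of Lemma \ref{lemma:degree_is_a_sum_multiplicties}, and these are exactly the compatibilities needed to verify the functoriality relation $\mc H_{F_1}(a)=\tfrac1{\deg(F_2/F_1)}\mc H_{F_2}(j(a))$.

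For \emph{uniqueness}, suppose $\mc H_F$ and $\mc H_F'$ both satisfy the conclusion, and set $\Delta_F=\mc H_F-\mc H_F'$. The two homotopy conditions extracted from diagram \eqref{diagram:SRL_def} give $\delta_m\circ\Delta_F=0$ and $\Delta_F\circ\delta_{m+1}=0$, so $\Delta_F$ descends to a map $\bar\Delta_F\colon H^{m+1}(\Gamma(F,m+1))\to H^{m-1}(\Gamma(\k,m))$ which, by condition (2) of Definition \ref{def:SRL}, kills the class of every element $f_1\wedge f_2\wedge c_3\wdw c_{m+1}$. I would then show $\bar\Delta_F=0$ in two steps. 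First, for $F=\k(t)$, I would prove that the quotient of $H^{m+1}(\Gamma(\k(t),m+1))$ by the classes of elements of the form $f_1\wedge f_2\wedge(\text{constants})$ is generated by Totaro elements, via a five-term-relation reduction parallel to Lemma \ref{lemma:abel_five_term_relation_goes_to_zero} and Proposition \ref{prop:Lambda_generated_by_Totaro_cycles}; since $\Delta_{\k(t)}$ vanishes on Totaro elements (both maps take the prescribed value there), this forces $\Delta_{\k(t)}=0$. Second, for general $F$, I would show that $\L^{m+1}F^\t$ is generated, modulo the Steinberg relations and the decomposable-with-constants elements, by the images $j(\L^{m+1}\k(f)^\t)$ attached to non-constant functions $f\in F$; the functoriality relation then reduces $\Delta_F$ on each such image to $\deg(F/\k(f))\cdot\Delta_{\k(f)}=0$, giving $\Delta_F=0$.

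The hard part is the pair of generation statements that drive uniqueness: that, modulo products $f_1\wedge f_2\wedge(\text{constants})$ and Steinberg relations, the relevant symbols are generated by Totaro elements over rational subfields. This is the analog in $\Gamma$ of Proposition \ref{prop:Lambda_generated_by_Totaro_cycles}, and controlling the five-term relations precisely enough to descend to a single rational subfield $\k(f)\subseteq F$ is the main technical obstacle. The other delicate point is the very existence of a \emph{functorial, normalized} nullhomotopy of $Tot_F$, for which I would lean directly on the methods of \cite{bolbachan_2023_chow}.
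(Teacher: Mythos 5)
Your handling of $\k(t)$ is essentially the paper's (Proposition \ref{prop:SRL_P1}, Lemma \ref{lemma:Res_is_zero}, Corollary \ref{cor:SRL_on_Totaro}): there $\L^{m+1}\k(t)^\t$ is the sum of $\im\delta_{m+1}$ and the decomposables-with-constants, which both forces uniqueness and reduces existence to the well-definedness check of Lemma \ref{lemma:Res_is_zero}. But for a general $F\in\F_1$ both halves of your argument break on the same point: for a curve of positive genus the decomposition $\L^{m+1}F^\t=\im\delta_{m+1}+\k^\t\wedge\L^{m}F^\t$ fails, so "the same construction carried out on the curve $X$" does not produce a map at all --- there is no canonical extension of the homotopy beyond that sum, and Lemmas \ref{lemma:functoriality_of_residue_2} and \ref{lemma:degree_is_a_sum_multiplicties} only control tame symbols, not the homotopy $h$. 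The actual existence proof is the norm-map construction of Section \ref{sec:strong_suslin_reciprocity_law}: realize $F$ as the residue field of a general valuation $\nu$ on $\k(t)(s)$, lift $a$ to $b\in\L^{m+2}\k(t,s)^\t$ with vanishing residues at all other general valuations (Theorem \ref{th:main_exact_sequence}), define $\mc H_F(a)$ as minus the sum of the contributions of the special valuations (whose residue fields are rational), and use the two-dimensional Parshin reciprocity (Theorem \ref{th:Parshin_sum_point_curve}) to verify the axioms and the independence of choices. You defer to the methods of the cited paper only in your closing sentence, while the body of your existence argument describes a construction that does not exist; this step is the core of the theorem and cannot be treated as a local patching of tame symbols.

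The uniqueness argument has an independent, fatal gap: the generation statement it rests on is false. Over an algebraically closed $\k$, every element of $j(\L^{m+1}\k(f)^\t)$ already lies in $\im\delta_{m+1}+\k^\t\wedge\L^m F^\t$ (factor each rational function of $f$ into linear factors; $(f-a)\wedge(f-b)$ equals $u\wedge(1-u)$ with $u=(f-a)/(b-a)$ up to terms containing a constant). Hence your claim that the images of rational subfields generate $\L^{m+1}F^\t$ modulo Steinberg elements and decomposables-with-constants is exactly the claim that $K^M_{m+1}(F)\otimes\Q$ is decomposable. This fails already for $m+1=2$ and $F=\C(E)$ an elliptic function field: the collection of tame symbols of a decomposable class lies in $\C^\t\otimes(\text{principal divisors})_\Q$, while a generic symbol $f\wedge g$ has tame symbols involving values such as $g(P)$ at a zero $P$ of $f$ that do not satisfy the resulting multiplicative constraints. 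So $\Delta_F$ is not forced to vanish by its vanishing on Steinberg elements, constants, and rational subfields. In the paper uniqueness is instead extracted from the norm-map formalism (independence of $N_{F_2/F_1,a}$ of the generator $a$, $\SRL(j)\circ N_{F_2/F_1}=\id$, transitivity), which pins the family down as $\mc H_F=N_{F/\k(t)}(h_{\k(t)})$; some replacement for that mechanism is needed, and a generation statement of the kind you propose cannot supply it.
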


    \begin{theorem}
    \label{th:two_dimensional_reciprocity_law}
        \begin{enumerate}
            \item For any field $L\in\F_2$ and any $b\in\L^{m+2}(L^\t)$ we have
$$\sum\limits_{\nu\in\dval(L)}\mc H_{\ol L_\nu}(\ts_\nu(b))=0.$$
           \item Let $S$ be a smooth proper surface over $\k$ and $b\in\Lambda^{m+2} k(S)^\t$, such that $b$ is strictly regular. Then

            $$\sum\limits_{C\subset S}\mc H_{\k(C)}(\ts_C(b))=0.$$
        \end{enumerate}
        
    \end{theorem}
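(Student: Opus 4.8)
The plan is to prove the geometric statement (2) and the field-theoretic statement (1) together, treating them as essentially equivalent. Given $L\in\F_2$ I would choose a smooth proper surface $S$ with $\k(S)=L$ and, invoking resolution of singularities, pass to a blow-up on which $b$ becomes strictly regular. Every $\nu\in\dval(L)$ has a center on $S$ which is either an irreducible curve $C$ — in which case $\ol L_\nu=\k(C)$ and the term $\mc H_{\ol L_\nu}(\ts_\nu(b))$ is exactly one of those appearing in (2) — or a closed point, in which case $\nu$ is realised by an exceptional divisor on a further blow-up. For the latter valuations I would show $\mc H_{\ol L_\nu}(\ts_\nu(b))=0$, which is the $\mc H$-analog of Theorem~\ref{th:differential_strictly_regular_elements}: on the exceptional divisor $E\cong\P^1$ over a contracted point the element $\res{\ts_E(b)}{E}$ becomes a wedge of functions pulled back from that point together with constants, and such elements are killed by the product-vanishing property and the normalization/functoriality axioms of Theorem~\ref{th:strong_suslin_reciprocity_law_field}. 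This reduces both statements to computing $\Phi:=\sum_{C\subset S}\mc H_{\k(C)}(\ts_C(b))$ on a fixed $S$ with $b$ strictly regular, which is precisely property~(5) of Theorem~\ref{th:strong_suslin_reciprocity_law_varities}.

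Next I would show that $\Phi$ is a cocycle. Applying the boundary half of the homotopy relation for $\mc H$, namely $Tot_{\k(C)}=-\delta_m\circ\mc H_{\k(C)}$ on the top group $\L^{m+1}\k(C)^\t$, and writing $Tot_{\k(C)}=\sum_{\nu\in\val(\k(C))}\ts_\nu$, I obtain
\begin{align*}
-\delta_m(\Phi)=\sum_{C\subset S}Tot_{\k(C)}(\ts_C(b))=\sum_{C\subset S}\ \sum_{x\in C}\ts_x\bigl(\ts_C(b)\bigr),
\end{align*}
the double sum running over flags (a curve $C$ together with a point $x\in C$). Reorganizing by the codimension-two point $x$ and applying Lemma~\ref{lemma:char_of_strictly_regular}, strict regularity guarantees that at most two branches of the support of $b$ pass through $x$, so the iterated residues $\ts_x\ts_C$ cancel in pairs exactly as in Theorem~\ref{th:Parshin_reciprocity_law}. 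Hence $\delta_m(\Phi)=0$ and $\Phi$ defines a class in $H^{m-1}(\Gamma(\k,m))=\ker(\delta_m)/\im(\delta_m)$.

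The main obstacle is promoting this cocycle relation to the equality $\Phi=0$ in the cokernel $\Gamma(\k,m)_{m-1}/\im(\delta_m)$, since $\delta_m$ is far from injective and $H^{m-1}(\Gamma(\k,m))$ is generally nonzero. The point is that the codimension-two reciprocity is not merely a cohomological fact: it must use the one-dimensional map $\mc H$ itself and not only its boundary. I would resolve this by reducing $b$, via multilinearity and strict regularity, to the local standard generators $\xi_1\wdw\xi_k\wedge u_{k+1}\wdw u_{m+2}$, for which the curves with $\ts_C(b)\neq 0$ are the components of the simple normal crossings divisor $\sum\lvert(\xi_i)\rvert$ and each $\ts_C(b)$ is an explicit decomposable wedge of restricted functions. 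On each such $C$ the value $\mc H_{\k(C)}(\ts_C(b))$ is then pinned down by the defining axioms of Theorem~\ref{th:strong_suslin_reciprocity_law_field} (normalization on $\k(t)$, functoriality $\mc H_{F_1}=\tfrac1{\deg}\mc H_{F_2}\circ j$, and product-vanishing), and the sum over the configuration of curves telescopes to zero through the five-term relation of Lemma~\ref{lemma:abel_five_term_relation_goes_to_zero} — so that the two-dimensional reciprocity is, on generators, the integrated form of the one-dimensional Suslin relation already encoded by the homotopy. Where a residual ambiguity in the chosen primitive survives, I would eliminate it using the uniqueness clause of Theorem~\ref{th:strong_suslin_reciprocity_law_field}: any correction of the family $\{\mc H_F\}$ by the obstruction class would still satisfy the homotopy, functoriality and normalization axioms, and uniqueness then forces that class to vanish.
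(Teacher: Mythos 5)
Your reduction of item (1) to item (2) is sound and matches the paper's logic in reverse: valuations centered at points of $S$ contribute residues lying in the image of $\oL_\nu^\t\otimes\L^m\k^\t$ (this is Lemma \ref{lemma:finitnes_of_sum}), and such elements are killed by the product-vanishing axiom of a lifted reciprocity map. Your cocycle computation $\delta_m(\Phi)=0$ via the homotopy relation and the codimension-two cancellation is also correct. But the paper runs the implication the other way: it proves item (1) first and then deduces item (2) from it via Lemma \ref{lemma:finitnes_of_sum}, and the proof of item (1) does not go through any surface geometry at all. It follows the argument of Corollary 1.13 of the cited earlier paper: one writes $L=F(t)$, splits $\dval(L)$ into special and general valuations, and observes that the vanishing of the total sum is essentially built into the construction of $\mc H_{\oL_\nu}=\mc N_\nu(h)$ for general $\nu$, since $\mc N_\nu(h)(a)$ is \emph{defined} as $H_h(b)=-\sum_{\mu\in\dval(L)_{sp}}\mc N_\mu(h)(\ts_\mu(b))$ for a lift $b$ of $a$, with Theorem \ref{th:Parshin_sum_point_curve} guaranteeing consistency. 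The reciprocity is thus a consequence of how the maps were constructed, not something to be verified afterwards on a surface.

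This matters because your final step --- promoting $\delta_m(\Phi)=0$ to $\Phi=0$ in $\Gamma(\k,m)_{m-1}/\im(\delta_m)$ --- has a genuine gap, which you yourself identify as "the main obstacle" but do not overcome. The local normal form $\xi_1\wdw\xi_k\wedge u_{k+1}\wdw u_{m+2}$ of Lemma \ref{lemma:char_of_strictly_regular} holds only in a neighborhood of each point, so you cannot reduce a global $b$ to such generators; and even for a decomposable $\ts_C(b)$ on a curve $C$ of positive genus, the value $\mc H_{\k(C)}(\ts_C(b))$ is \emph{not} pinned down by normalization on $\k(t)$, functoriality and product-vanishing alone --- it is computed through the lift/norm construction, and no telescoping via the five-term relation is available without further argument. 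The closing appeal to uniqueness is also not a proof as stated: $\Phi$ is a single element of $\Gamma(\k,m)_{m-1}/\im(\delta_m)$ attached to the pair $(S,b)$, not a modification of the family $\{\mc H_F\}$, so the uniqueness clause of Theorem \ref{th:strong_suslin_reciprocity_law_field} cannot be invoked until you exhibit a second family of lifted reciprocity maps differing from $\mc H$ by the putative obstruction --- which is not done. To close the gap you would have to unwind the definition of $\mc H_{\k(C)}$ via lifts and special valuations, at which point you are reproducing the paper's proof of item (1).
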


Theorem \ref{th:strong_suslin_reciprocity_law_varities} directly follows from Theorem \ref{th:strong_suslin_reciprocity_law_field} and Theorem \ref{th:two_dimensional_reciprocity_law}.

\subsection{Case of of the field $\k(t)$}
First of all we need the following lemma:
\begin{lemma}
\label{lemma:Res_is_zero}
Let $a\in \Gamma(\k(t),m+1)_{m}$. If $\delta_{m+1}(a)$ lies in the image of the multiplication map $\Lambda^2 \k(t)^\t\otimes \Lambda^{m-1}\k^\t\to\Lambda^{m+1}\k(t)^\t$, then we have:
$$\sum\limits_{\nu\in\val(\k(t))}\ts_\nu(a)=0\in \Gamma(\k, m)_{m-1}/\im(d).$$
\end{lemma}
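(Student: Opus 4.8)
The plan is to exploit the rational parametrization of $\k(t)$ to reduce the statement to a computation on the projective line, where the tame symbol map of Proposition \ref{prop:tame_symbol} can be applied concretely. First I would recall that the valuations in $\val(\k(t))$ are of two kinds: the valuation $\nu_\infty$ at infinity, and for each closed point of $\mb A^1$ a valuation $\nu_c$ with uniformizer $t-c$ (over an algebraically closed $\k$, each such $c$ lies in $\k$). The sum $\sum_\nu \ts_\nu(a)$ is thus a finite sum, since $a$ has only finitely many valuations at which it is not a unit. The key structural input is the hypothesis: $\delta_{m+1}(a)$ lies in the image of $\L^2\k(t)^\t\otimes\L^{m-1}\k^\t$, i.e. $\delta_{m+1}(a)$ can be written as a sum of wedges $f\wedge g\wedge c_3\wdw c_{m+1}$ with $f,g\in\k(t)^\t$ and $c_i\in\k^\t$.

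The heart of the argument, as I see it, is to connect the quantity $\sum_\nu\ts_\nu(a)$ to the promised homotopy $\mc H_{\k(t)}$ whose existence is the content of Theorem \ref{th:strong_suslin_reciprocity_law_field} (property (2) pins down its value on the Totaro generator). Concretely, I would argue that $Tot_{\k(t)}(a)=\sum_\nu\ts_\nu(a)$ is a cocycle in $\tau_{\ge m-1}\Gamma(\k,m)$ landing in degree $m-1$, and that its class modulo $\im(\delta_m)$ is what we must show vanishes. The commutativity of the homotopy diagram in Definition \ref{def:SRL} gives $Tot_F\circ\delta_{m+1}=-\delta_m\circ Tot_F$ on the truncated complex; combined with the hypothesis that $\delta_{m+1}(a)$ is a product of functions wedged with constants, and property (2) of a lifted reciprocity map (which kills exactly such elements $f_1\wedge f_2\wedge c_3\wdw c_{m+1}$), I would deduce that the obstruction to $a$ being closed vanishes in the relevant quotient. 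The cleanest route is: since $\delta_{m+1}(a)$ is a sum of decomposable elements $f\wedge g\wedge \vec c$, and these are precisely the elements annihilated by any lifted reciprocity map $h$, the diagram forces $Tot_F(a)\equiv 0$ modulo $\im(\delta_m)$.

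In more detail, I expect the computation to proceed by reducing to the generating elements of $\Gamma(\k(t),m+1)_m$, namely $\{x\}_2\otimes b$ and decomposable wedges, and checking the vanishing on each. For a wedge term $f_1\wedge\dots\wedge f_{m+1}$ whose image under $\sum\ts_\nu$ is controlled by the Weil reciprocity law, the total residue vanishes by the classical reciprocity for $\L^{m+1}$ (Theorem \ref{th:Parshin_reciprocity_law} applied to $Y=\P^1$, or equivalently the one-dimensional Parshin law). For a term $\{x\}_2\otimes b$ with $x\in\k(t)$, properties (2) and (3) of Proposition \ref{prop:tame_symbol} reduce $\ts_\nu$ to residues of the $\{x\}_2$-part, and the sum over all valuations is again governed by a reciprocity statement for the Bloch group, which is where the hypothesis on $\delta_{m+1}(a)$ enters to guarantee the relevant cocycle is a boundary.

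The main obstacle, I anticipate, is handling the $\B_2$-component: unlike the purely multiplicative wedge part, where Weil reciprocity gives the vanishing directly, the $\{x\}_2\otimes b$ terms require knowing that the sum of their residues is a coboundary in $\Gamma(\k,m)_{m-1}/\im(\delta_m)$, and this is precisely a one-dimensional instance of the reciprocity law one is trying to build. The hypothesis that $\delta_{m+1}(a)$ is decomposable is the device that circumvents a circular dependence: it ensures that the $\B_2$-contributions organize themselves, via the functional equation for the dilogarithm encoded in $\mc R_2(\k)$, into something killed in the target. I would therefore isolate this reduction as the crux and verify it by pushing everything through the explicit differential $\delta_{m+1}$ and the residue formulas, using that $\k$ may be assumed algebraically closed so that all closed points are $\k$-rational and the cross-ratio computations of Corollary \ref{cor:Abel_five_term_relations} are available.
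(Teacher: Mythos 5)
Your main route is circular within the logical structure of the paper. You propose to deduce $\sum_\nu\ts_\nu(a)=0$ from the existence of the homotopy $\mc H_{\k(t)}$ (equivalently, of a lifted reciprocity map $h$ on $\k(t)$ in the sense of Definition \ref{def:SRL}): the upper triangle of the homotopy diagram gives $Tot_{\k(t)}(a)=h(\delta_{m+1}(a))$ and property (2) of $h$ kills the decomposable element $\delta_{m+1}(a)$. That implication is correct, but the existence of $h$ on $\k(t)$ is exactly what Proposition \ref{prop:SRL_P1} establishes, and its proof \emph{uses} Lemma \ref{lemma:Res_is_zero} to show that the formula $h(a_1+a_2)=\sum_x\ts_x(b)$, $b\in\delta_{m+1}^{-1}(a_1)$, is well defined (independent of the choice of $b$ and of the decomposition $a=a_1+a_2$). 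So you cannot invoke Theorem \ref{th:strong_suslin_reciprocity_law_field} or any lifted reciprocity map here. Your fallback route (reduce to generators, Weil reciprocity for the wedge part, ``a reciprocity statement for the Bloch group'' for the $\{x\}_2\otimes b$ part) correctly identifies the crux but does not supply it: you acknowledge that the $\B_2$-contribution needs precisely the reciprocity law being built, and the hypothesis on $\delta_{m+1}(a)$ alone does not organize those contributions into a boundary without a further input.

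The paper resolves this differently and non-circularly. First, one subtracts a linear combination of elements $\{(t-a)/(t-b)\}_2\otimes c_3\wdw c_{m+1}$ (which have zero total residue and whose differentials span the genuinely two-variable part of $\L^2\k(t)^\t\otimes\L^{m-1}\k^\t$) to arrange that $\delta_{m+1}(a)$ lies in $\k(t)^\t\otimes\L^m\k^\t$. Second, one subtracts the explicit elements $a''=(-1)^m\ts_\infty(a\wedge(1/t))$ and $a'_x=(-1)^{m+1}\ts_x(a)\wedge(t-x)$, each again with zero total residue, and a direct residue computation shows the resulting element $\wt a$ satisfies $\delta_{m+1}(\wt a)=0$. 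Finally, for \emph{closed} elements the vanishing of the total residue modulo $\im(\delta_m)$ is exactly Rudenko's strong Suslin reciprocity law, cited as \cite[Corollary 1.4]{rudenko2021strong}. This external input for cocycles is the missing ingredient in your sketch; without it (or an equivalent), neither of your two routes closes.
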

\begin{proof}
\begin{enumerate}
    \item Let $b=\delta_{m+1}(a)$. Subtracting a linear combination of some elements of the form $\{(t-a)/(t-b)\}_2\otimes c_3\wedge\dots\wedge c_{m+1}, a,b,c_i\in \k$ we can assume that $b$ lies in the image of the multiplication map $\k(t)^\t\otimes \Lambda^{m}\k^\t\to\Lambda^{m+1}\k(t)^\t$.
    \item Let $a''=(-1)^m\ts_{\infty}(a\wedge (1/t))$. For $x\in\k$ denote $a'_x= (-1)^{m+1}\ts_{x}(a)\wedge (t-x)$. Since the total residue of the elements $a'',a'_x$ is equal to zero, it is enough to prove the statement for the element $$\wt a=a-a''-\sum\limits_{x\in \k}a'_x.$$
    \item Let $\wt b=\delta_{m+1}(\wt a)$. We claim that $\wt b=0$. Indeed, the element $\wt b$ can be represented in the form
    $$\wt b=b'+\sum\limits_{x\in\k}(t-x)\wedge b_x,$$
    
    where $b'\in\Lambda^{m+1}\k^\t, b_x\in\Lambda^m\k^\t$.
    Simple computation shows that for any $x$ we have $\ts_{x}(b)=\ts_{\infty}(b\wedge (1/t))=0$. It follows that $b'=b_x=0$. So $\wt b=0$.
    \item So we can assume that $\delta_{m+1}(a)=0$. In this case the statement follows from \cite[Corollary 1.4]{rudenko2021strong}.
\end{enumerate}

\end{proof}
\begin{proposition}
\label{prop:strong_low_P^1}
\label{prop:SRL_P1}
On the field $\k(t)$ there is a unique lifted reciprocity map. 
\end{proposition}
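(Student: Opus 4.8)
The plan is to identify a lifted reciprocity map on $\k(t)$ with the data of its restrictions to two subgroups of $\L^{m+1}(\k(t)^\t)$, and to use Lemma \ref{lemma:Res_is_zero} as the single compatibility input. Write $D\subseteq\L^{m+1}(\k(t)^\t)$ for the image of the multiplication map $\L^2(\k(t)^\t)\otimes\L^{m-1}(\k^\t)\to\L^{m+1}(\k(t)^\t)$, i.e. the span of the decomposables $f_1\wedge f_2\wedge c_3\wdw c_{m+1}$ with $f_i\in\k(t)^\t$ and $c_j\in\k^\t$. The defining conditions of a lifted reciprocity map $h$ force $h$ on $\im\delta_{m+1}$ (the homotopy relation $h\circ\delta_{m+1}=Tot_{\k(t)}$ on $\B_2(\k(t))\otimes\L^{m-1}\k(t)^\t$) and force $h|_D=0$ (the second axiom). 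Hence both existence and uniqueness follow once I establish the generation statement $\L^{m+1}(\k(t)^\t)=\im\delta_{m+1}+D$ together with the well-definedness of the resulting prescription.

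First I would prove the generation statement. By multilinearity it suffices to treat wedges of irreducible monic polynomials and constants; a wedge with at least $m-1$ constant entries already lies in $D$. For a wedge with $r\ge 3$ non-constant linear entries $t-a_i$, the Steinberg element $\delta_{m+1}(\{(t-a_1)/(t-a_2)\}_2\otimes b)=\tfrac{t-a_1}{t-a_2}\wedge\tfrac{a_1-a_2}{t-a_2}\wedge b$ expands, after cancelling the repeated factor $t-a_2$, into $(t-a_1)\wedge(t-a_2)\wedge b$ plus terms containing the constant $a_1-a_2$; this rewrites $(t-a_1)\wedge(t-a_2)\wedge b$ modulo $\im\delta_{m+1}$ as a combination of wedges with one fewer non-constant entry. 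Iterating lowers $r$ to $2$, landing in $D$. Non-rational irreducible factors are dealt with by passing to a finite extension $L/\k$ that splits them and using $x=\tfrac1{[L:\k]}\,\mathrm{cores}_{L/\k}(x_L)$, where $x_L$ is the image of $x$ in $\L^{m+1}(L(t)^\t)$, together with the fact that corestriction carries $\im\delta_{m+1}$ and $D$ over $L(t)$ into the corresponding subgroups over $\k(t)$.

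Granting generation, uniqueness is immediate, since the two axioms pin down $h$ on $\im\delta_{m+1}$ and on $D$, hence on all of $\L^{m+1}(\k(t)^\t)$. For existence I would define $h(a):=Tot_{\k(t)}(w)$ whenever $a=\delta_{m+1}(w)+d$ with $d\in D$. This is well-defined precisely by Lemma \ref{lemma:Res_is_zero}: if $\delta_{m+1}(w)+d=\delta_{m+1}(w')+d'$ then $\delta_{m+1}(w-w')\in D$, so the hypothesis of that lemma holds and $Tot_{\k(t)}(w-w')=\sum_\nu\ts_\nu(w-w')=0$ in $\Gamma(\k,m)_{m-1}/\im\delta_m$. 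It then remains to verify the axioms. The relation $h\circ\delta_{m+1}=Tot_{\k(t)}$ and $h|_D=0$ hold by construction. The top homotopy relation $-\delta_m\circ h=Tot_{\k(t)}$ on $\L^{m+1}$ follows because $Tot_{\k(t)}$ is a morphism of complexes (a sum of the residue chain maps of Proposition \ref{prop:tame_symbol} composed with corestrictions), so $-\delta_m(Tot_{\k(t)}(w))=Tot_{\k(t)}(\delta_{m+1}(w))=Tot_{\k(t)}(a)-Tot_{\k(t)}(d)$, and $Tot_{\k(t)}(d)=0$ by the classical Weil reciprocity applied to the two non-constant entries of the decomposable $d$.

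The main obstacle is the generation statement $\L^{m+1}(\k(t)^\t)=\im\delta_{m+1}+D$, and in particular its form over a non-algebraically-closed base, where the transfer argument must be reconciled with the requirement that the constant entries lie in $\k^\t$ rather than merely in $L^\t$. Everything else is formal once Lemma \ref{lemma:Res_is_zero} is available: that lemma is exactly the cocycle condition guaranteeing that the prescription $a\mapsto Tot_{\k(t)}(w)$ descends to a well-defined homomorphism, and it is the reason uniqueness and existence can be proved in tandem.
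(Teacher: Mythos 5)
Your proposal is correct and follows essentially the same route as the paper: the same decomposition $\L^{m+1}\k(t)^\t=\im\delta_{m+1}+A_2$ (your $D$), uniqueness from the two axioms pinning $h$ down on each summand, well-definedness from Lemma \ref{lemma:Res_is_zero}, and verification of the remaining triangle via Weil reciprocity on $D$ and the fact that $Tot_{\k(t)}$ is a morphism of complexes on $\im\delta_{m+1}$. The generation statement you worry about is exactly what the paper dismisses as an ``elementary calculation,'' and the obstacle you flag over a non-algebraically-closed base does not arise here, since throughout Section 6 the field $\k$ is algebraically closed (so every irreducible factor is linear and your Steinberg reduction applies directly, with no corestriction needed).
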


We will denote this lifted reciprocity map by $h_{\k(t)}$.

\begin{proof}Denote by $A_1\subset \Lambda^{m+1}\k(t)^\t$ the image of $\delta_{m+1}$ and by $A_2$ the image of the multiplication map $\Lambda^2 \k(t)^\t\otimes \Lambda^{m-1}\k^\t\to\Lambda^{m+1}\k(t)^\t$. Elementary calculation shows that $A_1$ and $A_2$ together generate $\Lambda^{m+1}\k(t)^\t$. As any lifted reciprocity map is uniquely determined on $A_1$ and on $A_2$, uniqueness follows.

To show existence, define a map $$h\colon \Lambda^{m+1}\k(t)^\t\to (B_2(\k)\otimes \Lambda^{m-2}\k^\t)/\im(\delta_m)$$ as follows.

Let $a=a_1+a_2$, where $a_1\in A_1, a_2\in A_2$. Choose some $b\in \delta_{m+1}^{-1}(a_1)$ and define $h(a)=\sum\limits_{x\in\mathbb P^1}\ts_x(b)$. This map is well-defined by Lemma \ref{lemma:Res_is_zero}. 

We already know that $h$ satisfies the second property of Definition \ref{def:SRL}  and that the upper-left triangle of diagram (\ref{diagram:SRL_def}) is commutative. Let us show the bottom-right triangle is commutative. On $A_2$ its commutativity follows from Weil reciprocity law. Its commutativity on $A_1$ follows from the fact that total residue map $Tot_F$ is a morphism of complexes.
\end{proof}

\begin{corollary}
\label{cor:SRL_on_Totaro}
    $$h_{\k(t)}(t\wedge (1-t)\wedge (1-a/t)\wedge c_4\wdw c_{m+1})=-\{a\}_2\otimes c_4\wdw c_{m+1}.$$
\end{corollary}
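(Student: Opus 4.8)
The plan is to apply the explicit description of $h_{\k(t)}$ obtained in the proof of Proposition \ref{prop:SRL_P1}. Recall that this map is computed by first writing the given element of $\Lambda^{m+1}\k(t)^\t$ as $a_1+a_2$ with $a_1\in\im(\delta_{m+1})$ and $a_2$ in the image of the multiplication map, then choosing any $b\in\delta_{m+1}^{-1}(a_1)$ and setting $h(a)=\sum_{x\in\P^1}\ts_x(b)$, the sum being well-defined by Lemma \ref{lemma:Res_is_zero}. The crucial observation is that the Totaro element $\omega:=t\wedge(1-t)\wedge(1-a/t)\wedge c_4\wdw c_{m+1}$ already lies in $\im(\delta_{m+1})$: indeed, taking $b:=\{t\}_2\otimes(1-a/t)\wedge c_4\wdw c_{m+1}\in\B_2(\k(t))\otimes\Lambda^{m-1}\k(t)^\t$ and using the formula $\delta_{m+1}(\{g\}_2\otimes h)=g\wedge(1-g)\wedge h$ with $g=t$, one obtains $\delta_{m+1}(b)=\omega$. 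Hence we may take $a_1=\omega$, $a_2=0$, and it remains to compute $\sum_{x\in\P^1}\ts_x(b)$.

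Next I would evaluate the polylogarithmic tame symbol $\ts_x(b)$ at every closed point of $\P^1$ using the three properties in Proposition \ref{prop:tame_symbol}. At $x=0$ and $x=\infty$ the function $t$ has non-zero valuation, so property (2) gives $\ts_x(b)=0$. At every other point $t$ is a unit, and property (3) gives $\ts_x(b)=-\{\ol t\}_2\otimes\ts_x\bigl((1-a/t)\wedge c_4\wdw c_{m+1}\bigr)$, where $\ol t$ is the residue of $t$ and the inner $\ts_x$ is now the tame symbol on $\Lambda$. For $x\in\k\setminus\{0,a\}$ all of $1-a/t,c_4,\dots,c_{m+1}$ are units, so this inner tame symbol vanishes and $\ts_x(b)=0$. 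The only surviving point is $x=a$: there $1-a/t=(t-a)/t$ has order one, the $c_i$ are units, and Lemma \ref{lemma:leibniz_rule_tame_symbol} gives $\ts_a\bigl((1-a/t)\wedge c_4\wdw c_{m+1}\bigr)=c_4\wdw c_{m+1}$, while $\ol t=a$ since $a\in\k\setminus\{0\}$. Therefore $\ts_a(b)=-\{a\}_2\otimes c_4\wdw c_{m+1}$.

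Summing over all $x$ leaves only the contribution at $x=a$, which yields $h_{\k(t)}(\omega)=-\{a\}_2\otimes c_4\wdw c_{m+1}$, as claimed. I do not anticipate a serious obstacle here: the argument is essentially bookkeeping once the correct preimage $b$ is identified. The only points requiring care are the case analysis on the valuations---one must confirm that the contributions at the zero and pole of $t$ (namely $x=0,\infty$) drop out via property (2) rather than via property (3), and that at the generic points $x\ne 0,a$ the vanishing comes from the $\Lambda$-tame symbol of a product of units---and the sign in property (3), which is what produces the final minus sign.
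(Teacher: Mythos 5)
Your proposal is correct and follows essentially the same route as the paper: both take the preimage $b=\{t\}_2\otimes(1-a/t)\wedge c_4\wdw c_{m+1}$, use the defining relation $h_{\k(t)}\circ\delta_{m+1}=Tot_{\k(t)}$ on $\im(\delta_{m+1})$, and evaluate $Tot_{\k(t)}(b)$. The only difference is that you spell out the point-by-point tame-symbol computation that the paper leaves implicit, and your case analysis (vanishing at $t=0,\infty$ by property (2), at units away from $t=a$ by property (3), and the surviving term at $t=a$) is accurate.
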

\begin{proof}
Let $$b=\{t\}_2\otimes (1-a/t)\wedge c_4\wdw c_{m+1}.$$

By the definition of the map $h_{\k(t)}$ we know that
$Tot_{k(t)}(b)=h_{k(t)}\delta_{m+1}(b).$ We have
$$Tot_{k(t)}(b)=-\{a\}_2\otimes c_4\wdw c_{m+1}, \delta_{m+1}(b)=t\wedge (1-t)\wedge (1-a/t)\wedge c_4\wdw c_{m+1}.$$
The statement follows.
\end{proof}

\subsection{The construction of the lift}
\label{sec:prel_results:lift}

\begin{definition}
\label{def:types_of_valuations}
Let $F\in \F_1$. A valuation $\nu\in \dval(F(t))$ is called \emph{general} if it corresponds to some irreducible polynomial over $F$. The set of general valuations are in bijection with  the set of all closed points on the affine line over $F$, which we denote by $\mb A_{F,(0)}^1$.  A valuation is called \emph{special} if it is not general. Denote the set of general (resp. special) valuations by $\dval(F(t))_{gen}$ (resp. $\dval(F(t))_{sp}$). 
\end{definition}

\begin{remark}
\label{rem:types_of_valuations}

Let $F\in\F_1$. Let us realize $F$ as a field of fractions on some smooth projective curve $X$ over $k$. Set $S=X\t \mathbb P^1$. It can be checked that a valuation $\nu\in \dval(\k(S))$ is special in the following two cases:

\begin{enumerate}
    \item There is a birational morphism $p\colon \wt S\to S$, and the valuation $\nu$ corresponds to some irreducible divisor $D\subset \wt S$ contracted under $p$.
    \item The valuation $\nu$ corresponds to some of the divisors $X\t\{\infty\}, \{a\}\t\mathbb P^1, a\in X$.
\end{enumerate}

Otherwise, the valuation $\nu$ is general.
It follows from this description that if $\nu$ is a special valuation different from $X\times \{\infty\}$, then the residue field $\overline{F(t)}_\nu$ is isomorphic to $\k(t)$.
\end{remark}

\begin{definition}
\label{def:lift}
Let $F\in \F_1$, $j, m\in \mathbb N$, $\nu\in \dval(F(t))_{gen}$ and $a\in \Gamma(\ol {F(t)}_\nu, m)_j$. \emph{A lift} of the element $a$ is an element $b\in \Gamma(F(t), m+1)_{j+1}$ satisfying the following two properties:
\begin{enumerate}
    \item $\ts_\nu(b)=a$ and
    \item for any general valuation $\nu'\in\dval(F(t))_{gen}$ different from $\nu$, we have $\ts_{\nu'}(b)=0$.
\end{enumerate}

The set of all lifts of the element $a$ is denoted by $\mathcal L(a)$.
\end{definition}

\begin{theorem}
\label{th:main_exact_sequence}
Let $F\in \F_1$ and $j\in\{m,m+1 \}$. For any $\nu\in \dval(F(t))_{gen}$ and $a\in \Gamma(\ol{F(t)}_\nu, m+1)_j$, the following statements hold:
\begin{enumerate}
    \item  The set $\mathcal L(a)$ is non-empty.
    \item Let us assume that $j=m+1$. For any $b_1,b_2\in \mathcal L(a)$, the element $b_1-b_2$ can be represented in the form $a_1+\delta_{m+2}(a_2)$, where $a_1\in \Lambda^{m+2} F^\t$ and $a_2\in \Gamma(F(t),m+2)_{m+1}$ such that for any $\nu\in\dval(F(t))_{gen}$ the element $\ts_\nu(a_2)$ lies in the image of the map $\delta_{m+1}$.
\end{enumerate}
\end{theorem}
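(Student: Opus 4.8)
The plan is to treat both parts as statements about the residues of the complex $\Gamma(F(t),m+2)$ at the general valuations, which by Definition \ref{def:types_of_valuations} are exactly the closed points of $\A^1_F$. Throughout I will use that $\ts_\nu$ is a morphism of complexes (Proposition \ref{prop:tame_symbol}), so that it commutes with $\delta$ up to the sign coming from the shift $[-1]$, and that a symbol with all entries in $F^\times$ has vanishing residue at every general valuation, since all its factors are units of trivial valuation and Lemma \ref{lemma:leibniz_rule_tame_symbol} applies.

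For part (1) I first treat the top degree $j=m+1$, where $a\in\Lambda^{m+1}\ol{F(t)}_\nu^\t$ and the lift is sought in $\Lambda^{m+2}F(t)^\t$. Here the residue at a general valuation is the classical Milnor tame symbol, and the existence of an element with prescribed residue $a$ at $\nu$ and vanishing residues at all other general valuations is the surjectivity in the Bass--Tate--Milnor localization sequence for $F(t)$; filtering $\Lambda^{m+2}F(t)^\t$ by the degrees of the polynomial factors and inducting on this filtration produces the required $b$. I then reduce the case $j=m$ to this one: writing $a=\sum_i\{\ol U_i\}_2\otimes b_i$ with $\ol U_i\in\ol{F(t)}_\nu^\t$ and $b_i\in\Lambda^{m-1}\ol{F(t)}_\nu^\t$, I lift each $\ol U_i$ to a unit $U_i\in F(t)^\t$ and each $b_i$ to an element $\beta_i\in\Lambda^m F(t)^\t$ that is already a top-degree lift (residue $b_i$ at $\nu$, zero elsewhere). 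Setting $b=-\sum_i\{U_i\}_2\otimes\beta_i$ and using properties (2) and (3) of Proposition \ref{prop:tame_symbol} gives $\ts_\nu(b)=a$, while at any other general valuation $\nu'$ each term vanishes: either $U_i$ is a non-unit, killing the term by (2), or $U_i$ is a unit and $\ts_{\nu'}(\beta_i)=0$ by construction.

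For part (2), put $b=b_1-b_2$; by Definition \ref{def:lift} it lies in $\Lambda^{m+2}F(t)^\t$ and has vanishing residue at every general valuation. Passing to the quotient $\Lambda^{m+2}F(t)^\t/\im(\delta_{m+2})\cong K^M_{m+2}(F(t))$, where $\im(\delta_{m+2})$ is spanned by the Steinberg elements $f\wedge(1-f)\wedge w$, the class of $b$ has vanishing residue at every closed point of $\A^1_F$, so Milnor's exact localization sequence gives $b\equiv a_1\pmod{\im(\delta_{m+2})}$ for a constant symbol $a_1\in\Lambda^{m+2}F^\t$. Hence $b=a_1+\delta_{m+2}(a_2)$ for some $a_2\in\mathcal B_2(F(t))\otimes\Lambda^m F(t)^\t$. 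Since $\ts_\nu(a_1)=0$ and $\ts_\nu(b)=0$, the morphism-of-complexes property yields $\delta_{m+1}(\ts_\nu(a_2))=0$, so each $\ts_\nu(a_2)$ is a cocycle in $\Gamma(\ol{F(t)}_\nu,m+1)_m$, nonzero for only finitely many $\nu$. Applying part (1) in the case $j=m$, I lift each cocycle $c_\nu:=\ts_\nu(a_2)$ to $\ell_\nu\in\mathcal B_2(F(t))\otimes\Lambda^m F(t)^\t$ with residue $c_\nu$ at $\nu$ and zero elsewhere, and replace $a_2$ by $a_2-\sum_\nu\ell_\nu$, which then satisfies $\ts_\nu(a_2)\in\im(\delta_{m+1})$ for every general $\nu$.

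The main obstacle is keeping the two corrections compatible. Subtracting $\sum_\nu\ell_\nu$ changes $\delta_{m+2}(a_2)$ by the coboundary $\delta_{m+2}(\sum_\nu\ell_\nu)$, which again has vanishing residues at all general valuations (because each $c_\nu$ is a cocycle) but need not be a constant symbol, and so cannot be absorbed into $a_1$ as it stands. Resolving this requires running the degree filtration of part (1) and the argument of part (2) simultaneously, carrying the residue condition through a downward induction on the maximal polynomial degree appearing, rather than invoking Milnor's sequence as a black box; the delicate point is to show that the cocycle classes $[\ts_\nu(a_2)]$ can be trivialized inside $\im(\delta_{m+2})$ while preserving a constant leading term. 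This is also where the hypothesis $j\in\{m,m+1\}$ is essential, since it guarantees that only $\mathcal B_2$, and no higher $\mathcal B_k$, enters the residues being controlled.
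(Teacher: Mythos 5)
Your part (1) is essentially right: the Bass--Tate/Milnor induction on polynomial degree does produce top-degree lifts at the level of exterior powers (not just in Milnor $K$-theory, since the correction terms at points of smaller degree are units at $\nu$ and so do not disturb the residue there), and your reduction of the case $j=m$ to the case $j=m+1$ via $b=-\sum_i\{U_i\}_2\otimes\beta_i$ and properties (2)--(3) of Proposition \ref{prop:tame_symbol} is sound. Note, though, that the paper does not actually prove this theorem in the text: it defers entirely to Lemma 2.8, Proposition 2.9 and Theorem 2.6 of \cite{bolbachan_2023_chow} (stated there for $m=2$), so the substance you are trying to supply is exactly the content of those cited results.

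For part (2) there is a genuine gap, and you have correctly located it yourself in your last paragraph. The Milnor localization sequence gives $b=a_1+\delta_{m+2}(a_2)$ with $a_1$ constant, and the morphism-of-complexes property only gives $\delta_{m+1}(\ts_\nu(a_2))=0$, i.e.\ that each $\ts_\nu(a_2)$ is a cocycle --- but the theorem demands it be a \emph{coboundary}. Your proposed fix, replacing $a_2$ by $a_2-\sum_\nu\ell_\nu$, kills the residues but changes $\delta_{m+2}(a_2)$ by $\delta_{m+2}\bigl(\sum_\nu\ell_\nu\bigr)$, which is an element of $\Lambda^{m+2}F(t)^\t$ with vanishing residues at all general valuations but is not a constant symbol; absorbing it requires another application of the very statement being proved, so the argument is circular as written. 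What is actually needed is a modification $z$ of $a_2$ with $\delta_{m+2}(z)$ constant (e.g.\ $z\in\ker\delta_{m+2}$) and $\ts_\nu(z)\equiv\ts_\nu(a_2)\bmod\im(\delta_{m+1})$ for all general $\nu$; constructing such a $z$ is the nontrivial content of the auxiliary results the paper imports from \cite{bolbachan_2023_chow}, and it is done there by carrying the $\mathcal B_2$-component through the degree filtration simultaneously with the top-degree decomposition, exactly the ``run both inductions at once'' strategy you describe but do not execute. As it stands, your last paragraph is a statement of the remaining problem rather than a proof of it, so part (2) is not established.
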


The proof of this theorem relies on Lemma 2.8 and Proposition 2.9 from loc.cit. Both of these statements were formulated for arbitrary $m$. The proof of Theorem 2.6. from loc.cit. is quite formal and does not use any specific properties of the case $m=2$ and can be easily generalised to arbitrary $m$.

\subsection{Parshin reciprocity law}
\label{sec:prel_results:Parshin}

We need the following statement:

\begin{theorem}
\label{th:Parshin_sum_point_curve}
Let $L\in\F_2$ and $j\in\{m+1, m+2\}$. For any $b\in \Gamma(L,m+2)_j$ and all but finitely many $\mu\in \dval(L)$ the following sum is zero:
$$\sum\limits_{\mu'\in \val(\ol L_\mu)}\ts_{\mu'}\ts_\mu(b)=0.$$
Moreover the following sum is zero:
$$\sum\limits_{\mu\in \dval(L)}\sum\limits_{\mu'\in \val(\oL_\mu)}\ts_{\mu'}\ts_\mu(b)=0.$$
\end{theorem}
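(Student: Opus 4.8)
The plan is to prove the statement degree by degree in $j$, treating the two cases $j=m+2$ and $j=m+1$ separately, and in both to reduce the inner sum $\sum_{\mu'\in\val(\ol L_\mu)}\ts_{\mu'}(\cdot)$ to the one–dimensional reciprocity law for the tame symbol on a complete curve, i.e. the vanishing of the total tame symbol on $\Lambda^\bullet\ol L_\mu^\t$ in every degree (the Bass--Tate/Weil reciprocity law). For $j=m+2$ we have $b\in\Lambda^{m+2}L^\t$, so $\ts_\mu(b)\in\Lambda^{m+1}\ol L_\mu^\t$ lands in the \emph{top} degree of $\Gamma(\ol L_\mu,m+1)$; hence $\sum_{\mu'}\ts_{\mu'}\ts_\mu(b)=0$ already for each individual $\mu$ by the reciprocity law, and both assertions of the theorem follow at once. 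Thus $j=m+2$ needs no geometry.

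The genuinely two–dimensional case is $j=m+1$, where by linearity I may take $b=\{u\}_2\otimes\gamma$ with $u\in L^\t$ and $\gamma\in\Lambda^m L^\t$. Using the three rules of Proposition~\ref{prop:tame_symbol} I would first record that $\ts_\mu(b)=0$ unless $u$ is a unit along $\mu$, in which case $\ts_\mu(b)=-\{\ol u\}_2\otimes\ts_\mu(\gamma)$; iterating, $\ts_{\mu'}\ts_\mu(b)$ is nonzero only when $u$ is a unit at the whole flag $(\mu,\mu')$, and then it equals $\{u(\mathrm{pt})\}_2\otimes\ts_{\mu'}\ts_\mu(\gamma)$, where $u(\mathrm{pt})$ is the value of $u$ at the corresponding codimension two point and $\ts_{\mu'}\ts_\mu(\gamma)$ is the purely exterior double residue. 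So everything is governed by the value of $u$ at the flag and by the $\Lambda$–theoretic double residue of $\gamma$.

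Next I would geometrise: realise $L=\k(S)$ for a smooth projective surface $S$, and, using Hironaka's resolution (we are in characteristic zero), choose $S$ so that $u$ extends to a morphism $u\colon S\to\P^1$ and the total support of $b$ (the divisors of $u$, of $1-u$ and of the factors of $\gamma$) is a simple normal crossing divisor; in particular $\{u=0\}$, $\{u=\infty\}$, $\{u=1\}$ are pairwise disjoint, and $\gamma$ is strictly regular in the sense of Definition~\ref{def:strictly_regular}. On such a model every infinitely near valuation $\mu$ (centre a point of $S$) contributes zero: if $u$ is not a unit along $\mu$ the term vanishes, while if it is a unit then, the fibres of $u$ being separated, $u$ is a unit at the centre and its restriction $\ol u$ to the exceptional curve equals the constant $u(\mathrm{centre})$, so $\sum_{\mu'}\ts_{\mu'}\ts_\mu(b)=\{u(\mathrm{centre})\}_2\otimes\sum_{\mu'}\ts_{\mu'}\ts_\mu(\gamma)=0$ again by the reciprocity law in degree $m-1$. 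This already gives the ``all but finitely many'' assertion, since only the finitely many curves $C\subset S$ in the support of $\gamma$ with $u$ a unit along $C$ survive. It remains to evaluate $\sum_{C\subset S}\sum_{x}\ts_x\ts_C(b)$, which I would regroup as a sum over closed points $p\in S$ of the contributions of the branches through $p$: at a point $p$ where $u$ is a unit with value $c=u(p)$ the two normal–crossing branches $C_1,C_2$ give $\{c\}_2\otimes\bigl(\ts_p\ts_{C_1}(\gamma)+\ts_p\ts_{C_2}(\gamma)\bigr)$, and the two exterior double residues cancel by the local computation underlying Theorem~\ref{th:Parshin_reciprocity_law}; at a point $p$ on $\operatorname{div}(u)$ every surviving branch has $\ol u$ vanishing or with a pole at $p$, so the contribution is zero by the first rule of Proposition~\ref{prop:tame_symbol}.

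I expect the main obstacle to be the bookkeeping of the $\B_2$–factor together with the infinitely near valuations: the scheme works only because one can arrange $u$ to be a genuine morphism to $\P^1$ with separated fibres, which is exactly what reduces every exceptional contribution to the one–dimensional reciprocity law and forces $\ol u$ to be constant on each exceptional curve. A second delicate point is to check that the residues $\ts_C(\gamma)$ stay strictly regular on the curves $C$, so that Theorem~\ref{th:Parshin_reciprocity_law} applies branch by branch; this is the surface analogue of the fact, used in the proof of Theorem~\ref{th:Lambda(m)_well_defined}, that a normal crossing divisor restricts to one on each component. Finally, since the double residues a priori land in the polylogarithmic complex of the residue field at the flag, I would first reduce to algebraically closed $\k$ (where this field is $\k$ itself) via the Galois descent of Corollary~\ref{cor:Galois_descent} and its analogue for $\Gamma$.
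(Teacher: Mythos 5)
Your argument rests on the claim that the total tame symbol $\sum_{\mu'}\ts_{\mu'}$ vanishes on $\Lambda^{n}\ol L_\mu^\times$ \emph{in every degree} for a complete curve over an algebraically closed field. This is false for $n\ge 3$ on the level of exterior powers: the Bass--Tate/Suslin reciprocity law lives in Milnor $K$-theory, i.e.\ it gives vanishing only modulo the Steinberg elements $c\wedge(1-c)\wedge\dots$, which is exactly the subspace $\im(\delta_m)\subset\Lambda^m\k^\times$. Concretely, for $a=t\wedge(1-t)\wedge(t-\lambda)\in\Lambda^3\k(t)^\times$ the local symbols at $t=0,1,\infty$ all vanish and the one at $t=\lambda$ equals $\lambda\wedge(1-\lambda)$, so $\sum_x\ts_x(a)=\lambda\wedge(1-\lambda)\neq 0$ in $\Lambda^2\k^\times\otimes\Q$ in general (were it always zero, $\delta_2$ would be the zero map and $\B_2$ would carry no information). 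Measuring precisely this failure is the whole point of the lifted reciprocity maps $\mc H_X$ constructed in this section. Consequently your case $j=m+2$ (``needs no geometry'') collapses: for the finitely many curves $\mu$ in the support of $b$ the inner sum $\sum_{\mu'}\ts_{\mu'}\ts_\mu(b)$ is in general nonzero, and the vanishing of the double sum is a genuinely two-dimensional cancellation, flag by flag, as in Theorem \ref{th:Parshin_reciprocity_law}: on a model where $b$ is strictly regular the two branches through each double point of the support contribute opposite double residues. The same false appeal to ``the reciprocity law in degree $m-1$'' appears in your treatment of the infinitely near valuations in the case $j=m+1$; there the conclusion is correct but for a different reason: the residue of a strictly regular element along an exceptional divisor has the special shape $f\wedge c_1\wedge\dots$ with the $c_i$ constant (cf.\ Lemma \ref{lemma:finitnes_of_sum}), and for such elements the total residue on the exceptional curve vanishes literally because a principal divisor has degree zero.

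For comparison: the paper does not spell out a proof either, referring to Theorem 2.10 of the author's earlier work, whose argument is exactly the Parshin-type one just described (good model, strict regularity, vanishing of exceptional contributions, pairwise cancellation at the double points of the support). Your outline of the $j=m+1$ case --- factoring out $\{u\}_2$ via Proposition \ref{prop:tame_symbol}, regrouping over closed points, and cancelling the two branches --- is in that spirit and is essentially repairable; but the case $j=m+2$ must be run through the same machine rather than dismissed, and every invocation of ``reciprocity on the curve'' must be replaced either by the flag-by-flag cancellation or by the degree-zero argument for residues of the special constant-laden shape. (The final reduction to algebraically closed $\k$ is unnecessary here, since Section 6 already works over an algebraically closed base.)
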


The proof of this theorem is completely similar to the proof of Theorem 2.10 from \cite{bolbachan_2023_chow}. 

\begin{remark}
    The proof of Theorem 2.10 from loc.cit relies on Lemma 2.14. The proof of this lemma is not correct. However this lemma can be proved similarly to the proof Lemma \ref{lemma:char_of_strictly_regular} from this paper. 
\end{remark}

Also, we need the following lemma:

\begin{lemma}
\label{lemma:finitnes_of_sum}
Let $S$ be a smooth proper surface. Let $b\in \Lambda^{m+2} L^\t$ be strictly regular. For any birational morphism $\ph\colon \wt S\to S$ and any divisor $E\subset \wt S$ contracted under $\ph$, the element $\ts_E(\ph^*(b))$ lies in the image of the multiplication map $\overline L_\nu^\t\otimes \Lambda^m \k^\t\to \Lambda^{m+1}\overline L_\nu^\t$.
\end{lemma}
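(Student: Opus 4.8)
The statement to prove is Lemma~\ref{lemma:finitnes_of_sum}: for a smooth proper surface $S$, a strictly regular element $b\in\Lambda^{m+2}L^\t$ (where $L=\k(S)$), a birational morphism $\ph\colon\wt S\to S$, and a divisor $E\subset\wt S$ contracted under $\ph$, the element $\ts_E(\ph^*(b))$ lies in the image of the multiplication map $\oL_\nu^\t\otimes\Lambda^m\k^\t\to\Lambda^{m+1}\oL_\nu^\t$, where $\nu=\nu_E$ and $\oL_\nu=\k(E)$.

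My plan is to reduce to a local, explicit computation using the structure theory already developed for strictly regular elements. Since $E$ is contracted by $\ph$, its image $W=\ph(E)$ is either a point or a curve in $S$; the key point is that $\dim W\le 1$ while $\dim E=1$. First I would pick a closed point $y\in W$ and apply Lemma~\ref{lemma:char_of_strictly_regular} to the strictly regular element $b$ at $y$: on a suitable affine neighborhood $U\ni y$ I can write $b$ as a linear combination of terms of the form $\xi_1\wdw\xi_k\wedge u_{k+1}\wdw u_{m+2}$, where the $u_i$ are invertible near $y$ and the $\xi_i$ extend to a local system of parameters at $y$. Since the statement to be proved is additive in $b$, it suffices to treat a single such term.

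\textbf{The core computation.} For a single term $a=\xi_1\wdw\xi_k\wedge u_{k+1}\wdw u_{m+2}$, I would pull back by $\ph$ and compute $\ts_E(\ph^*(a))$ using the Leibniz-type formula of Lemma~\ref{lemma:leibniz_rule_tame_symbol}. Because the $u_i$ are units at $y$ and $\ph(E)$ meets any neighborhood of $y$, the functions $\ph^*(u_i)$ are $\nu_E$-units and restrict to elements $\ol{\ph^*(u_i)}\in\oL_\nu^\t=\k(E)^\t$. The essential numerical input is that $W=\ph(E)$ has dimension at most one and $E$ has dimension one, so the parameters $\xi_1,\dots,\xi_k$ that cut out $W$ (or pass through $y$) cannot all pull back to objects carrying independent information on $E$: at least $k-1$ of the restrictions $\ol{\ph^*(\xi_i)}$ must be \emph{constant}, i.e.\ lie in $\k^\t\subset\k(E)^\t$. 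This is exactly the dimension-counting phenomenon exploited in the proof of Lemma~\ref{lemma:differential_normal_crossing_zero}. Consequently, after applying the tame symbol, the resulting element of $\Lambda^{m+1}\oL_\nu^\t$ is a wedge of one factor from $\oL_\nu^\t$ with $m$ factors that are constants in $\k^\t$, which is precisely a generator of the image of $\oL_\nu^\t\otimes\Lambda^m\k^\t$.

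\textbf{The main obstacle and how I would handle it.} The delicate point is making the constancy claim rigorous: I must show that among the pulled-back parameters restricted to $E$, only one can be a genuine (nonconstant) element of $\k(E)^\t$, with the rest forced into $\k^\t$. The clean way to argue this is geometric. Since $E$ is contracted, $\ph|_E$ factors through $W$ with $\dim W\le 1$, so $\ph^*$ of any rational function on $S$, when restricted to $E$, factors through $\k(W)$; when $\dim W=0$ every such restriction is constant, and when $\dim W=1$ the restrictions all lie in the one-dimensional field $\k(W)\hookrightarrow\k(E)$. In the worst case $\dim W=1$, I would argue that $\ts_E(\ph^*(a))$ has all its "nonconstant" content coming from at most the single valuation direction transverse to $E$ together with $\k(W)$; using the local normal-crossing form from Lemma~\ref{lemma:char_of_strictly_regular} and the multiplicativity of $\ord_E$, the factors reduce to one element of $\oL_\nu^\t$ wedged with elements pulled back from $\k(W)$, and pulling back further to $\k$ (the residue factors $u_i$ evaluated along $W$ are genuinely constant) completes the reduction. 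I expect this dimension bookkeeping—tracking exactly which restricted factors are units versus parameters and which become constant along the contracted $E$—to be the only real work; everything else is a direct application of Lemmas~\ref{lemma:leibniz_rule_tame_symbol}, \ref{lemma:char_of_strictly_regular}, and the contraction hypothesis.
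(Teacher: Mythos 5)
The paper states Lemma \ref{lemma:finitnes_of_sum} without proof, so there is no argument of the author's to compare against; judged on its own, your sketch contains the right idea but is marred by one incorrect case split and one imprecise step. First, a divisor $E$ contracted by a birational morphism of \emph{surfaces} necessarily has $\dim\ph(E)=0$: "contracted" means $\dim\ph(E)<\dim E=1$. So the "worst case $\dim W=1$" that you spend your final paragraph on is vacuous, and the vaguest part of your argument ("I would argue that \dots the factors reduce to \dots") is exactly the part you never need. The whole proof is the point case: write $b$ near $y=\ph(E)$ as a combination of terms $\xi_1\wdw\xi_k\wedge u_{k+1}\wdw u_{m+2}$ via Lemma \ref{lemma:char_of_strictly_regular}, note that $\ph^*(u_i)$ is a unit along $E$ whose residue is the \emph{constant} $u_i(y)$ (because $E$ maps to the single point $y$), and apply Lemma \ref{lemma:leibniz_rule_tame_symbol}.

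Second, your phrase "at least $k-1$ of the restrictions $\ol{\ph^*(\xi_i)}$ must be constant" is not the right statement: the $\xi_i$ vanish at $y$, so $\ph^*(\xi_i)$ has positive order along $E$ and does not restrict to an element of $\k(E)^\t$ at all. The correct bookkeeping is that Lemma \ref{lemma:leibniz_rule_tame_symbol} gives
$$\ts_E(\ph^*(a))=\ts_E\bigl(\ph^*(\xi_1)\wdw\ph^*(\xi_k)\bigr)\wedge u_{k+1}(y)\wdw u_{m+2}(y),$$
and the first block lies in $\Lambda^{k-1}\k(E)^\t$ where $k\le\dim S=2$ because the $\xi_i$ extend to a regular system of parameters at $y$. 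Hence for $k=2$ one gets a single element of $\k(E)^\t$ wedged with $m$ constants, for $k=1$ an integer multiple of a wedge of $m+1$ constants, and for $k=0$ zero — in every case an element of the image of $\oL_\nu^\t\otimes\Lambda^m\k^\t$. With these two corrections (the image of $E$ is a point; the bound $k\le 2$ replaces the loose "independence" heuristic), your argument is complete and is surely the intended one.
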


\begin{corollary}
\label{cor:finitnes_of_sum}
    Let $L\in\F_2$. For any $b\in \Lambda^{m+2} L^\t$ and all but a finite number of $\nu\in \dval(L)$ the element $\ts_\nu(b)$ belongs to the image of the multiplication map $\overline L_\nu^\t\otimes \Lambda^m \k^\t\to \Lambda^{m+1}\overline L_\nu^\t$.
\end{corollary}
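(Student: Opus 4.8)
The plan is to reduce everything to a single smooth proper model on which $b$ is strictly regular, and then to split the divisorial valuations of $L$ into two classes handled by completely different mechanisms.

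First I would realize $L=\k(S)$ for a smooth proper surface $S$ over $\k$, chosen so that $b$ is strictly regular on $S$ in the sense of Definition \ref{def:strictly_regular}. Concretely, I fix any finite representation $b=\sum_\beta n_\beta\,\alpha_1^\beta\wdw\alpha_{m+2}^\beta$ and let $\Sigma=\sum_{\beta,j}|(\alpha_j^\beta)|$ be the (finite) union of the divisors of the functions occurring in it; by Hironaka resolution there is a composition of blow-ups $S'\to S$ after which $\Sigma$ becomes a simple normal crossing divisor, so that this one representation exhibits $b$ as strictly regular everywhere on $S'$. Replacing $S$ by $S'$, I may assume $b$ is strictly regular on $S$ with global support $|\Sigma|$, whose set of irreducible components is finite.

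Next I would classify $\nu\in\dval(L)$ by the dimension of its center on $S$, which exists and is irreducible by the valuative criterion of properness. If the center of $\nu$ is a prime divisor $C\subset S$, then $\nu=\nu_C$; and if $C$ is not one of the finitely many components of $|\Sigma|$, every function $\alpha_j^\beta$ is a unit along $C$, so $\ts_C(\alpha_1^\beta\wdw\alpha_{m+2}^\beta)=0$ by Proposition \ref{prop:tame_symbol_classiacal} (the case $\nu(x)=0$), whence $\ts_\nu(b)=0$ lies trivially in the image of the multiplication map. If instead the center of $\nu$ is a closed point $p\in S$, then $\nu$ is realized by a prime divisor $E$ contracted under some birational morphism $\ph\colon\wt S\to S$, and $\ts_\nu(b)=\ts_E(\ph^*(b))$ lies in the image of $\ol L_\nu^\t\otimes\L^m\k^\t\to\L^{m+1}\ol L_\nu^\t$ directly by Lemma \ref{lemma:finitnes_of_sum}. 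Thus the only valuations that can fail the conclusion lie among the finitely many components of $|\Sigma|$, which proves the corollary.

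The substantive content is already carried by Lemma \ref{lemma:finitnes_of_sum}, which disposes of all (infinitely many) exceptional valuations at once; the corollary itself only adds the elementary unit computation for non-exceptional divisors, together with the observation that non-units occur along finitely many divisors of $S$. The one point requiring care is the reduction step: I must ensure that passing to the model $S'$ keeps us within birational models of the same field $L$ (so only blow-ups, not alterations, are allowed, which is exactly what characteristic-zero resolution provides) and that every divisorial valuation of $L$ indeed has a center on the chosen proper model, so that the dichotomy ``center a divisor / center a point'' is exhaustive.
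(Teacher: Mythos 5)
Your proof is correct and follows exactly the route the paper intends: the corollary is stated without proof as an immediate consequence of Lemma \ref{lemma:finitnes_of_sum}, and the intended argument is precisely your reduction to a smooth proper model on which $b$ is strictly regular, the vanishing of $\ts_C(b)$ for the infinitely many divisorial centers outside the finite support, and the application of the lemma to all point-centered valuations. Nothing is missing.
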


\subsection{The proof of Theorem \ref{th:strong_suslin_reciprocity_law_field}}

Denote by $\Set$ the category of sets. 
Define a contravariant functor $$\SRL\colon \F_1\to \Set$$ as follows. For any $F\in \F_1$ the set $\SRL(F)$ is equal to the set of all lifted reciprocity maps on $F$. If $j\colon F_1\emb F_2$ then $\SRL(j)(h_{F_2})=h_{F_1}$,where $h_{F_1}(\alpha_1\wdw \alpha_{m+1}):=\dfrac 1{\deg j}h_{F_2}(j(\alpha_1)\wdw j(\alpha_{m+1}))$. The fact that in this way we get a functor can be proved similarly to \cite[Proposition 2.1]{bolbachan_2023_chow}.

Let $F\in \F_1$, $\nu\in\dval(F(t))$. Denote the field $F(t)$ by $L$. Our goal is to construct the map $\N_\nu\colon \SRL(F)\to \SRL(\oL_\nu)$. We will do this in the following three steps:

\begin{enumerate}
    \item We will define this map when $\nu$ is a special valuation.
    \item Using the construction of the lift from Section \ref{sec:prel_results:lift}, for any general valuation $\nu\in\dval(L)$, we will define a map $$\N_\nu\colon \SRL(F)\to Hom(\Gamma(\ol L_\nu, m+1)_{m+1}\to \Gamma(\k,m)_{m-1}/\im(\delta_m)).$$
    \item Using Theorem \ref{th:Parshin_sum_point_curve} from Section \ref{sec:prel_results:Parshin}, we will show that for any $h\in\SRL(F)$, the map $\N_\nu(h)$ is a lifted reciprocity map on the field $\oL_\nu$. So, $\N_\nu$ gives a map $\SRL(F)\to \SRL(\ol{L}_\nu)$.
\end{enumerate}

Denote by $\nu_{\infty, F}\in\dval(F(t))$ the discrete valuation corresponding to the point $\infty\in\P^1_F$. Let $\nu$ be special. If $\nu=\nu_{\infty,F}$ then define $\N_\nu(h)=h$ (here we have used the identification of $\oL_{\nu_{\infty,F}}$ with $F$). In the other case we have $\overline{L}_\nu\simeq \k(t)$ (see Remark \ref{rem:types_of_valuations}). In this case define $\N_\nu(h)$ to be the unique lifted reciprocity map from Proposition \ref{prop:SRL_P1}. We have defined $\N_\nu$ for any $\nu\in \dval(L)_{sp}$. 

Let $h\in\SRL(F)$. Define a map $H_h\colon \Lambda^{m+2} L^\t\to \Gamma(\k,m)_{m-1}/\im(\delta_m)$ by the following formula:
$$H_h(b)=-\sum\limits_{\mu\in \dval(L)_{sp}}\mc N_\mu(h)(\ts_{\mu}(b)).$$ 
This sum is well-defined by Corollary \ref{cor:finitnes_of_sum}.
\begin{definition}
\label{def:theta_gen}
Let $\nu\in\dval(L)_{gen}$. Define a map $$\mc N_\nu\colon \SRL(F)\to \Hom(\Lambda^{m+1}\ol L_\nu^\t,  \Gamma(\k,m)_{m-1}/\im(\delta_m))$$ as follows. Let $h\in\SRL(F)$ and $a\in \Lambda^{m+1}\ol L_\nu^\t$. Choose some lift $b\in\mc L(a)$ and define the element $\mc N_\nu(h)(a)$ by the formula $H_h(b)$. 
\end{definition}

Similarly to Section 3.1 of loc.cit. it can be shown that this definition is well-defined and gives a map $\mc N_\nu\colon \SRL(F)\to \SRL(\ol L_\nu).$ We omit details.

\begin{definition}
\label{def:norm_map}
Let $j\colon F_1\emb F_2$ be an extension of some fields from $\F_1$. Let $a$ be some generator of $F_2$ over $F_1$. Denote by $p_a\in F[t]$ the minimal polynomial of $a$ over $F_1$. Denote by $\nu_a$ the corresponding valuation. The residue field $\overline{L}_{\nu_{a}}$ is canonically isomorphic to $F_2$. So we get a map $\N_{{\nu_a}}\colon\SRL(F_1)\to \SRL(F_2)$, which we denote by $N_{F_2/F_1, a}$. This map is called \emph{the norm map}. 
\end{definition}

Then one can prove the following theorem:

\begin{theorem}
    The map $N_{F_2/F_1, a}$ does not depend on $a$. Denote it simply by $N_{F_2/F_1}$. We have:
    \begin{enumerate}
        \item If $F_1\subset F_2\subset F_3$ are extensions from $\F_1$, then $$N_{F_3/F_2}\circ N_{F_2/F_1}=N_{F_3/F_1}.$$
        \item Let $j\colon F_1\emb F_2$. We have $\SRL(j)\circ N_{F_2/F_1}=id$.
        \item Let $F\in\F_1$. Choose some embedding $j\colon \k(t)\emb F$. The element $$N_{F/\k(t)}(h_{\k(t)})$$ does not depend on $j$.
    \end{enumerate}
\end{theorem}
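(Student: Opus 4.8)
The three assertions, together with the preceding claim that $N_{F_2/F_1,a}$ is independent of $a$, are the coherence conditions that turn the functor $\SRL$ equipped with the maps $\N_\nu$ into a functor with norms. The plan is to follow the argument of \cite[Section 3]{bolbachan_2023_chow} line by line; the passage from weight $2$ to arbitrary weight $m$ is formal, the only weight-specific inputs being the existence and rigidity of lifts (Theorem \ref{th:main_exact_sequence}), the two-dimensional Parshin reciprocity law (Theorem \ref{th:Parshin_sum_point_curve}), and the uniqueness of the reciprocity map on $\k(t)$ (Proposition \ref{prop:SRL_P1}). The single mechanism underlying every step is that, by Definition \ref{def:theta_gen}, each value $\N_\nu(h)(\alpha)$ is computed as $H_h(b)=-\sum_{\mu\in\dval(L)_{sp}}\N_\mu(h)(\ts_\mu(b))$ for an arbitrary lift $b\in\mc L(\alpha)$, so that every identity among norm maps becomes an identity among sums of residues, which is exactly what the reciprocity laws control.

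I would first treat independence of the generator, which I expect to be the main obstacle. Since $F_1,F_2\in\F_1$ both have transcendence degree one over $\k$, the extension $F_2/F_1$ is finite, and two generators $a,a'$ determine two general valuations $\nu_a,\nu_{a'}$ of the \emph{same} field $L=F_1(t)$, both with residue field $F_2$ (through the distinct identifications $t\mapsto a$ and $t\mapsto a'$). Comparing $\N_{\nu_a}(h)$ with $\N_{\nu_{a'}}(h)$ is thus a reciprocity statement on the surface over $\k$ whose function field is $L$, on which $\nu_a$ and $\nu_{a'}$ appear as two irreducible curves; the required cancellation of the special-valuation contributions is supplied by the two-dimensional Parshin reciprocity law (Theorem \ref{th:Parshin_sum_point_curve}), applied to a suitable element of $\Lambda^{m+2}L^\t$ interpolating the two residues. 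The transitivity identity $N_{F_3/F_2}\circ N_{F_2/F_1}=N_{F_3/F_1}$ for a tower $F_1\subset F_2\subset F_3$ is the instance of the same principle in which the composite norm is realized as an iterated residue along a flag of valuations and Theorem \ref{th:Parshin_sum_point_curve} collapses it to the single-step norm, independence of the generator ensuring that the chosen flag does not matter. Both computations follow \cite[Section 3]{bolbachan_2023_chow}, the generalization to weight $m$ being formal.

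Item $(2)$, the identity $\SRL(j)\circ N_{F_2/F_1}=\id$, is by contrast a direct computation. Let $\beta\in\Lambda^{m+1}F_1^\t$ and let $p_a$ be the minimal polynomial of $a$. As a lift of $j(\beta)\in\Lambda^{m+1}\ol{L}_{\nu_a}^\t$ one may take the explicit element $b=p_a\wedge\beta\in\Lambda^{m+2}L^\t$: since every factor of $\beta$ is a unit at each finite place and $p_a$ is irreducible, the only nonzero general residue of $b$ is $\ts_{\nu_a}(b)=j(\beta)$. When one evaluates $H_h(b)=-\sum_{\mu}\N_\mu(h)(\ts_\mu(b))$, every special residue other than the one at $\nu_\infty$ is of the form (one function)$\,\wedge\,$(constants), hence is killed by property $(2)$ of Definition \ref{def:SRL}; at $\nu_\infty$ the function $p_a$ has a pole of order $\deg p_a=\deg j$ and $\N_{\nu_\infty}(h)=h$, so that $\ts_{\nu_\infty}(b)=-(\deg j)\,\beta$ and therefore $\N_{\nu_a}(h)(j(\beta))=H_h(b)=(\deg j)\,h(\beta)$. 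Dividing by $\deg j$, as prescribed in the definition of $\SRL(j)$, returns $h(\beta)$.

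For item $(3)$, independence of the embedding $j\colon\k(t)\emb F$, I would reduce to a common subfield. Given two embeddings $j,j'$, set $F_0=\k(j(t),j'(t))\subseteq F$; since $F$ has transcendence degree one, $F_0\in\F_1$ and the extensions $F/F_0$, $F_0/j(\k(t))$ and $F_0/j'(\k(t))$ are all finite. By the transitivity already established, $N_{F/j(\k(t))}=N_{F/F_0}\circ N_{F_0/j(\k(t))}$ and similarly for $j'$, so it suffices to treat $F=F_0$, where both embeddings realize $F_0$ as a finite extension of a rational subfield; the equality of the two resulting elements then follows from the uniqueness of $h_{\k(t)}$ (Proposition \ref{prop:SRL_P1}) together with independence of the generator, exactly as in loc.\ cit. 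Finally, the uniqueness clauses in the theorem are automatic: a lifted reciprocity map is pinned down on the two subgroups $\im(\delta_{m+1})$ and $\im\bigl(\Lambda^2\k(t)^\t\otimes\Lambda^{m-1}\k^\t\bigr)$ that together span $\Lambda^{m+1}\k(t)^\t$ in the proof of Proposition \ref{prop:SRL_P1}, so each norm map is determined by the properties imposed on it.
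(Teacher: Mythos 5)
Your overall strategy coincides with the paper's: the paper states this theorem without proof, deferring entirely to Section~3 of \cite{bolbachan_2023_chow} and asserting that the passage from weight $2$ to weight $m$ is formal, with exactly the three inputs you identify (Theorem~\ref{th:main_exact_sequence}, Theorem~\ref{th:Parshin_sum_point_curve}, Proposition~\ref{prop:SRL_P1}). Your explicit treatment of item (2) via the lift $b=p_a\wedge\beta$ is correct and is more detail than the paper supplies; the only point to watch is that at an exceptional special valuation the residue of $b$ can involve \emph{two} nonconstant functions (one from $p_a$ and one from the ratio of a component of $\beta$ to a power of the local equation of the exceptional divisor), not one as you state, but such elements are still killed by property (2) of Definition~\ref{def:SRL}, so your conclusion stands.

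The one genuine gap is in item (3). After reducing to $F_0=\k(j(t),j'(t))$ you claim the equality of $N_{F_0/j(\k(t))}(h_{\k(t)})$ and $N_{F_0/j'(\k(t))}(h_{\k(t)})$ follows from the uniqueness of $h_{\k(t)}$ together with independence of the generator. Neither ingredient applies: $F_0$ is the function field of a curve of possibly positive genus, so Proposition~\ref{prop:SRL_P1} says nothing about $\SRL(F_0)$ (indeed the content of the theorem is that $\SRL(F_0)$ has a canonical element, not a unique one), while independence of the generator compares $N_{F_2/F_1,a}$ and $N_{F_2/F_1,a'}$ for a \emph{fixed} subfield $F_1\subset F_2$, whereas here the two subfields $j(\k(t))$ and $j'(\k(t))$ of $F_0$ are different. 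The mechanism that actually identifies the two norms is the two-dimensional reciprocity law (Theorem~\ref{th:Parshin_sum_point_curve}) applied to the rational field $\k(u,v)\in\F_2$: both norms arise as the residue construction along the same irreducible curve $\{P(u,v)=0\}$, where $P$ is the irreducible relation between $j(t)$ and $j'(t)$, computed with respect to the two projections $\k(u)(v)$ and $\k(v)(u)$, and it is Parshin reciprocity on this surface, not uniqueness on $\k(t)$, that compares the two. This is how loc.\ cit.\ proceeds, so the deferral is recoverable, but as written the ingredients you cite do not suffice for this step.
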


Let us prove Theorem \ref{th:strong_suslin_reciprocity_law_field}. Item 1 follows directly  from the previous theorem. (See the proof of Theorem 1.8. from loc. cit.). Item 2 follows from Corollary \ref{cor:SRL_on_Totaro} and Proposition \ref{prop:strong_low_P^1}.

Let us prove Theorem \ref{th:two_dimensional_reciprocity_law}. The proof of the first item is similar to the proof of Corollary 1.13 from loc.cit. The second item follows from Lemma \ref{lemma:finitnes_of_sum}.

\bibliographystyle{alpha}      
\bibliography{mylib} 
\end{document}